\newcommand {\Z}{\mathbb{Z}}
\newcommand{\la}{\lambda}
\newcommand{\re}{\operatorname{Re}}
\newcommand{\sumast}{\mathop{\sum\nolimits^{\mathrlap{\ast}}}}
\newcommand{\Mod}[1]{\ (\mathrm{mod}\ #1)}
\newtheorem{thm}{Theorem}[section]
\newtheorem{defn}[thm]{Definition}
\newtheorem{lemma}[thm]{Lemma}
\newtheorem{prop}[thm]{Proposition}
\newtheorem{remark}[thm]{Remark}
\newtheoremstyle{named}{}{}{\itshape}{}{\bfseries}{.}{.5em}{\thmnote{#3}}
\theoremstyle{named}
\numberwithin{equation}{section}
\newcommand{\mz}{\ensuremath{\mathbb Z}}
\newcommand{\mr}{\ensuremath{\mathbb R}}
\newcommand{\intR}{\int_{-\infty}^{\infty}}
\newcommand{\shortmod}{\ensuremath{\negthickspace \negthickspace \negthickspace \pmod}}
\newcommand{\sumstar}{\sideset{}{^*}\sum}
\title{Level aspect subconvexity for $\textrm{GL(2)}\times \textrm{GL(2)}$ $\textrm{L}$-functions}
\author[K. Aggarwal]{Keshav Aggarwal}
\address{Department of Mathematics \\ Indian Institute of Technology Bombay\\ Mumbai\\ India}
\email{keshav@math.iitb.ac.in}
\author[S. Kumar] {Sumit Kumar}
 \address{Department of Mathematics \\
 	  Aarhus University \\
 	  8000 Aarhus C \\
 		Denmark}		
 \email{sumit.kumar@math.au.dk}
 \author[C.-H. Kwan]{Chung-Hang Kwan}
 \address{Department of Mathematics \\
 	  University College London (UCL) \\
    25 Gordon Street \\
 	  London WC1H 0AY \\
 	UK}		
 \email{ucahckw@ucl.ac.uk}
 \author[W. H. Leung]{Wing Hong Leung}
 \address{Department of Mathematics \\
 	  Rutgers University \\
 	  Piscataway \\
 	  NJ 08854 \\
 		U.S.A.}		
 \email{joseph.leung@rutgers.edu}
 \author[J. Li]{Junxian Li}
 \address{Department of Mathematics\\
 University of California, Davis\\
 1 Shields Avenue\\ Davis\\ CA 95616\\ U. S. A
 }
 \email{junxian@math.ucdavis.edu}
\author[M. Young]{Matthew P. Young}
 \address{Department of Mathematics \\
 	  Rutgers University \\
 	  Piscataway \\
 	  NJ 08854 \\
 		U.S.A.}		
 \email{mpy4@math.rutgers.edu}
\date{}
\begin{document}

\thanks{This material is based upon work supported by the National Science Foundation under agreement No. DMS-2302210 (M.Y.) and the EPSRC grant: EP/W009838/1 (C.H.K.). K.A. thanks Jasmin Matz for support via the Carlsberg Research Grant: Density and Approximation in Automorphic Families.  Any opinions, findings and conclusions or recommendations expressed in this material are those of the authors and do not necessarily reflect the views of the National Science Foundation. }

\begin{abstract}
    Let $f$ be a newform of prime level $p$ with any central character $\chi (\bmod\, p)$, and let $g$ be a fixed cusp form or Eisenstein series for $\hbox{SL}_{2}(\Z)$. We prove the subconvexity bound: for any $\varepsilon>0$,
\begin{align*}
L(1/2, f \otimes g) \ll p^{1/2-1/524+\varepsilon},
\end{align*}
where the implied constant depends on $g$, $\varepsilon$, and the archimedean parameter of $f$. This improves upon the previously best-known result by Harcos and Michel.  Our method ultimately relies on non-trivial bounds for bilinear forms in Kloosterman fractions pioneered by Duke, Friedlander, and Iwaniec, with later innovations by Bettin and Chandee.  
\end{abstract}

\subjclass[]{}
\keywords{}

\maketitle
\section{Introduction}
\subsection{Discussion of results}

There are a handful of methods to prove subconvexity bounds for $L$-functions. Among these, the moment method has been widely used for many decades. This method embeds a given $L$-function into a family and analyzes a suitable (e.g., amplified) moment averaged over the family;  see \cite{DFI01, DFI02}, for instance.
The moment method has been largely constrained to $L$-functions for $\hbox{GL}_1$ and $\hbox{GL}_2$, though with some notable exceptions 
such as \cite{Xiaoqing, Blomer2015OnTS,  Nelson}. 

Around 2010, Munshi pioneered the delta method for subconvexity, which dispenses with averaging over a family.
This approach was the first to address subconvexity for a wide range of $\hbox{GL}_3$ $L$-functions across various aspects, such as in \cite{MunshiIII, MunshiIV}. Moreover, the delta method has yielded strong subconvexity bounds, exemplified by results in \cite{LinJEMS, Aggarwal}. 
These notable successes over the past decade have spurred further explorations and new applications of this method.

Nevertheless, some of the most difficult subconvexity problems have only been resolved with the moment method.  A prominent example is the subconvexity for $\hbox{GL}_{2}\times \hbox{GL}_{2}$ Rankin--Selberg $L$-functions in the \emph{level aspect}, which has important applications to equidistribution problems of Heegner points on Shimura curves (e.g., \cite{Mi04}, \cite{HarcosMichel}). One of the most difficult cases is when the level is a prime $p$, in which case the problem is stated as follows.
Let $f$ be a cuspidal Hecke newform of prime level $p$ with central character $\chi \, (\bmod\, p)$ and its archimedean parameter (i.e., weight or Laplace eigenvalue) treated as fixed, and let $g$ be a fixed Hecke newform of level $1$ (say).  The problem is to establish a bound of the form
\begin{equation}\label{eq:subc}
    L(1/2, f \otimes g) \ \ll \ p^{1/2-\delta+\varepsilon}
\end{equation}
for some absolute constant $\delta>0$, and where the implied constant depends on $\varepsilon > 0$. When $g$ is an Eisenstein series, \eqref{eq:subc} becomes
\begin{equation*}
    L(1/2, f) \ \ll \ p^{1/4-\delta/2 + \varepsilon/2},
\end{equation*}
which was first proved by Duke, Friedlander, and Iwaniec in \cite{DFI2} for $\chi$ trivial, and \cite{DFI01} for $\chi$ primitive.
For the case where both of the forms $f$ and $g$ are cuspidal, 
Kowalski, Michel, and VanderKam \cite{KMV02} proved a subconvex bound for $L(1/2, f \otimes g)$ when  $\chi$ is trivial 
and $f$ is holomorphic. Subsequently,  Michel \cite{Mi04} addressed the case where $\chi$ is primitive and $g$ is holomorphic, whereas Harcos and Michel \cite{HarcosMichel} treated the case where $\chi$ is primitive and $g$ is a Maass form. We should also mention that Michel and Venkatesh \cite{MV} famously proved subconvexity for $\hbox{GL}_2 \times \hbox{GL}_2$ $L$-functions with arbitrary fixed $g$ and with $f$ varying in any aspect. The numerical values of $\delta$'s obtained in these works are recorded in Table \ref{table:subconvexity} in Section \ref{Otherwo}.

For the subconvexity problem \eqref{eq:subc}, the delta method has thus far been applied only to the case where both $f$ and $g$ are cuspidal and $\chi$ is trivial by  Raju \cite{Raju}. However, his method does not work when  $g$ is an Eisenstein series (\cite[p. 5]{Raju}). In fact, as commented in Kumar, Munshi, and  Singh  \cite{KMS24},   ``(the) effectivity and adaptability (of the delta method) in the more
arithmetic problem of level aspect remains a point of deliberation. In particular, it
seems that new inputs are required.''

 In this paper, we employ the delta method to prove a 
 subconvexity bound for $L(1/2, f \otimes g)$ that holds \emph{uniformly} for $f$ holomorphic or Maass, $g$ holomorphic, Maass or Eisenstein, and  $\chi$ primitive or trivial, in contrast to previous works using the moment method.  
 Our approach also overcomes the existing limitations of the delta method. 
 Furthermore, when $f$ and $g$ are both cuspidal, our bound improves upon the previous best-known result by Harcos \cite{Harcos}, where \eqref{eq:subc} was obtained  with $\delta = 1/1413$.  Our main result is the following.

\begin{thm}
\label{thm:mainthm}
    Let $p$ be prime, and let $f$ be a Hecke newform of level $p$ with central character $\chi\, (\bmod\, p)$.  Let $g$ be a Hecke eigenform of level $1$. Then we have
    \begin{equation}\label{eq:mainbdd}
        L(1/2, f \otimes g) \, \ll \, p^{1/2-1/524 + \varepsilon},
    \end{equation}
    where the implied constant depends on $g$, $\varepsilon$, and the archimedean parameter of $f$.
\end{thm}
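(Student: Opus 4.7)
The plan is to use the delta method combined with conductor lowering to detect the diagonal in the approximate functional equation, and then to invoke the Bettin--Chandee bound for bilinear forms in Kloosterman fractions. First, via a standard approximate functional equation, the problem reduces to saving an absolute power of $p$ over the trivial bound in dyadic sums
\[
S(N) \ = \ \sum_{n \geq 1} \lambda_f(n)\,\lambda_g(n)\, V\!\left(\frac{n}{N}\right), \qquad 1 \leq N \leq p^{1+\varepsilon},
\]
with $V$ a smooth bump; the critical range is $N \asymp p$.

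\emph{Delta method and Voronoi.} I would write
\[
S(N) \ = \ \sum_{n,m} \lambda_f(n)\lambda_g(m)\, V\!\left(\frac{n}{N}\right) W\!\left(\frac{m}{N}\right)\, \delta(n = m),
\]
and expand $\delta(n-m)$ by the DFI delta method, together with an auxiliary conductor-lowering parameter $K$ in the Munshi style:
\[
\delta(n-m) \ \approx \ \frac{1}{QK} \sum_{q \sim Q}\; \sumstar_{a \shortmod q}\; \int_{\R} e\!\left(\frac{(a + qKx)(n-m)}{qK}\right) g(q,x)\, dx, \qquad Q = \sqrt{N/K}.
\]
Next I would apply GL$(2)$ Voronoi summation separately to the $n$-sum (for the level-$p$ form $f$ with character $\chi$) and to the $m$-sum (level $1$). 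The $n$-Voronoi produces a dual variable of effective length $\asymp (pqK)^2/N$ with additive phase modulo $pqK$ (and a Gauss-sum factor of $\chi$ when $\chi$ is primitive); the $m$-Voronoi yields a shorter dual variable with modulus $qK$. Stationary-phase analysis of the resulting Bessel-type oscillatory factors then isolates the arithmetic content.

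\emph{Cauchy--Schwarz and bilinear forms.} I would then apply Cauchy--Schwarz, opening the square over the $m'$-variable so as to reduce matters to a bilinear form of the schematic shape
\[
\sum_{m_1',\, m_2' \sim M'}\ \sum_{n' \sim N'}\ \alpha(m_1',m_2')\, \beta(n')\; e\!\left(\frac{n'\,(\overline{m_1'} - \overline{m_2'})}{pqK}\right),
\]
which is precisely a bilinear form in Kloosterman fractions. The Bettin--Chandee estimate should deliver the required cancellation provided the lengths $M'$, $N'$ lie in its admissible range; tuning the parameters $K$ (and hence $Q$) against the enlarged modulus $pqK$ would then yield the exponent $1/524$.

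\emph{Main obstacle.} The principal difficulty --- and, in the language of \cite{KMS24}, the ``new input'' required --- is that the level $p$ of $f$ enlarges the effective Kloosterman modulus from $qK$ to $pqK$ after the $n$-Voronoi, pushing the bilinear form out of the standard admissible range and forcing a delicate re-optimization of $K$ against the large modulus. A second subtlety is uniformity: the argument must treat the Eisenstein case (in which $\lambda_g$ is a divisor function requiring a further divisor-split that does not undo the gain) simultaneously with the cuspidal case, and the primitive central character case together with the trivial one. Arranging the Cauchy--Schwarz axis so that the resulting bilinear lengths fall within the range of Bettin--Chandee, uniformly across all of these regimes, is where I expect the heart of the argument to lie.
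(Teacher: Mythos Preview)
Your proposal has a genuine gap: it omits the \emph{amplification} step, which the paper explicitly calls ``essential.'' The paper does \emph{not} apply the delta symbol directly to $\delta(n=m)$ with a Munshi-style auxiliary modulus $K$; instead it first writes
\[
S(N) \ \approx \ \frac{1}{\mathcal{A}_1(L)} \sum_{\ell \sim L} \gamma_\ell\, \overline{\lambda_f(\ell)}\, \sum_n \lambda_f(n\ell)\,\lambda_g(n),
\]
and only then applies the delta symbol to $\delta(m = n\ell)$ with $C \asymp \sqrt{NL}$. There is no separate conductor-lowering parameter; the amplifier length $L$ is what creates the room (after Voronoi the trivial bound is off by a factor $\sqrt{L}$, rather than being balanced at convexity). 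Without this extra $\ell$-variable your scheme is sitting exactly at the convexity threshold, and no choice of $K$ in your display will unbalance it --- the exponent $1/524$ comes precisely from optimising $L$ against the two main terms in the paper's endgame, so it cannot be reproduced by your parameter choice.

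The second, structural, gap concerns where the Kloosterman fractions actually arise and what the bilinear variables are. In your sketch they appear directly after Voronoi and Cauchy--Schwarz, with bilinear variables $m_1', m_2'$ (the Voronoi duals) and a fixed modulus $pqK$. In the paper the route is different and longer: after dual Voronoi the $h$-sum over residues collapses to a \emph{Ramanujan sum} $S(0,\, pn - m\ell;\, c)$; one then applies Cauchy--Schwarz in $m$ to remove $\lambda_f(m)$, and a further \emph{Poisson summation} in $m$ (which you do not have) is required. Only after Poisson and additive reciprocity does one obtain a genuine Kloosterman fraction
\[
e_{c_2 \ell_1}\!\big(\overline{c_1 \ell_2}\cdot m p\,(n_2\ell_1 - n_1\ell_2)\big),
\]
to which Bettin--Chandee is applied with the \emph{moduli} $A = c_1 \ell_2$, $B = c_2 \ell_1$ as the bilinear variables and the Poisson frequency $m$ as the third (trilinear) variable. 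Your bilinear form in $\overline{m_1'} - \overline{m_2'}$ modulo a fixed $pqK$ is not of this shape, and there is no mechanism in your outline producing a sum over two independent moduli of comparable size, which is what Theorem~\ref{thm:BC} needs.
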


\begin{remark}
  The proof easily generalizes to show the same bound \eqref{eq:mainbdd} for $L(1/2+it, f \otimes g)$, with polynomial dependence on $t$ and on the archimedean parameter of $f$.
\end{remark}

\begin{remark}
    The bound \eqref{eq:mainbdd} has the pleasant feature that it is \emph{independent} of\,  $\theta$, i.e., the approximation towards the Ramanujan conjecture. 
\end{remark}

\begin{remark}
\label{remark:conditionalamplifier}
    Under the assumption
    \begin{align}
    \label{eq:conditionalamplifier}
\mathcal{A}_1(L) \, := \, \sum_{\substack{L/2< \nu \leq L\\ \nu \text{ prime}}}|\lambda_f(\nu)|^2 \, \gg_{} \, L^{1-\varepsilon},
    \end{align}
the bound \eqref{eq:mainbdd} in Theorem \ref{thm:mainthm} can be improved to 
    \begin{align}
    \label{eq:RemarkTheoremConditionalAmplifier}
        L(1/2, f \otimes g) \, \ll \, p^{1/2-1/302 + \varepsilon};
    \end{align}
    see the discussion following \eqref{final bound}.
This observation is helpful for comparing the main argument in Section \ref{sect:proof} with the sketch presented in Section 
\ref{section:sketch}, where 
\eqref{eq:conditionalamplifier} is assumed for ease of exposition. 

\end{remark}
\begin{remark}
In Theorem \ref{thm:mainthm}, the assumption that $p$ is prime is largely to simplify our exposition, and it is expected that the same method works for general levels and arbitrary central characters. 
\end{remark}
\begin{remark}
Preliminary calculations suggest that our method of proof can be adapted to other aspects, such as subconvexity for $L(1/2+it_f, f \otimes g)$ as $t_{f}\to \infty$, where $t_f$ denotes the spectral parameter of $f$. We plan to address this question on another occasion. 

\end{remark}

Unlike the aforementioned works, our proof does not require solving any shifted convolution problems for Fourier coefficients or divisor functions, which is the main reason our bound is independent of $\theta$.
Instead, it makes crucial use of the following theorem by Duke, Friedlander, and Iwaniec \cite[Thm. 2]{DFI97} on \emph{Kloosterman fractions}. 

\begin{thm}[Duke--Friedlander--Iwaniec]
\label{thm:DFI}
Let $\alpha_m$ and $\beta_n$ be arbitrary complex numbers
supported on $M< m \leq 2M$ and $N < n \leq 2N$, respectively.
Let $a$ be a non-zero integer.
Then
\begin{equation}
\label{eq:DFI}
\sum_{(m,n)=1} \alpha_m \beta_n e\Big(\frac{a \overline{m}}{n}\Big)
\ \ll_{} \  \| \alpha \| \| \beta \| (|a| + MN)^{3/8} (M+N)^{11/48 + \varepsilon},
    \end{equation} 
    where\, $||\alpha||:= (\,\sum_{m} |\alpha_{m}|^2)^{1/2}$ and\, $||\beta||:= (\,\sum_{n} |\beta_{n}|^2)^{1/2}$. 
\end{thm}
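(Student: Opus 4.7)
The strategy is to reduce the bilinear form to sums of Kloosterman sums (to which the Weil bound applies), via Cauchy--Schwarz and Poisson summation, while exploiting the additive reciprocity of Kloosterman fractions. The trivial bound is $\|\alpha\|\|\beta\|(MN)^{1/2}$, so the task is to save a power of $M+N$.

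The first tool is reciprocity: for $(m,n)=1$,
\[
\frac{a\overline{m}}{n} \equiv -\frac{a\overline{n}}{m} + \frac{a}{mn} \pmod{1}.
\]
This renders the sum essentially symmetric in $m$ and $n$ up to the factor $e(a/(mn))$, which is either smooth or small when $|a|\le MN$, allowing partial summation. We may therefore reduce to whichever of $M \le N$ or $N \le M$ is more advantageous. Next, apply Cauchy--Schwarz in (say) $m$ against a smooth majorant $w(m)$ to eliminate $\alpha_m$, and expand the square to obtain
\[
|S|^2 \;\le\; \|\alpha\|^2 \sum_{n_1,n_2} \beta_{n_1}\overline{\beta_{n_2}} \sum_{m} w(m)\, e\!\left(\frac{a\overline{m}}{n_1} - \frac{a\overline{m}}{n_2}\right),
\]
where the inverses are taken modulo $n_1$ and $n_2$ respectively. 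The diagonal $n_1=n_2$ contributes at most $\|\alpha\|^2\|\beta\|^2 M$, which fits within the target.

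For off-diagonal pairs $(n_1,n_2)$, one combines the two inverses over the lcm modulus $q = n_1 n_2/(n_1,n_2)$ (a second application of reciprocity is needed to unify them), yielding an inner sum of shape $\sum_m w(m) e(c\overline{m}/q)$ for an explicit $c$ depending on $a,n_1,n_2$. Poisson summation modulo $q$ then transforms this into a weighted sum of Kloosterman sums $S(c,k;q)$, to which the Weil bound $|S(c,k;q)| \le \tau(q)(c,k,q)^{1/2} q^{1/2}$ delivers square-root cancellation. Summing over $k$, and then over $(n_1,n_2)$ stratified by $d=(n_1,n_2)$, yields the off-diagonal contribution.

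The principal obstacle is extracting the sharp exponent $11/48$ on $M+N$: a direct execution of Cauchy--Schwarz followed by Poisson and Weil tends to give only $1/4$, essentially because one is forced to complete the $m$-sum up to the full modulus $q$, incurring a $q^{1/2}$ loss that nearly cancels the $q^{-1/2}$ savings from Weil. The refinement behind $11/48$ amounts to a second Cauchy--Schwarz (or Poisson) pass that averages Weil's square-root cancellation across a secondary family, coupled with a sharp treatment of the arithmetic factors $(c,k,q)$ and the gcd $d=(n_1,n_2)$. Carefully tracking the dependence on the shift $a$---which controls the frequency introduced by the Poisson step---against the length of $m$ in the partition of unity ultimately produces the factor $(|a|+MN)^{3/8}$ and completes the argument.
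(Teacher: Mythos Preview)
The paper does not prove Theorem~\ref{thm:DFI}; it is quoted verbatim from Duke--Friedlander--Iwaniec \cite[Thm.~2]{DFI97} as an external input, so there is no in-paper proof to compare your proposal against.

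As for your sketch of the DFI97 argument itself: the opening moves (reciprocity to reduce to $M\le N$, Cauchy--Schwarz to kill one set of coefficients, completion/Poisson in the smooth variable, and the Weil bound) are indeed the basic ingredients, but your description of how the exponent $11/48$ arises is too vague to count as a proof. After Cauchy--Schwarz in $n$ one lands on a sum over $m_1,m_2,n$ with phase $e(a(m_2-m_1)\overline{m_1m_2}/n)$, and the heart of the DFI argument is an \emph{amplification}: one artificially inserts an extra averaging over a smooth variable of carefully chosen length, applies Cauchy--Schwarz a second time against this amplifier, and only then completes and invokes Weil. The exponent $11/48$ comes from optimizing the amplifier length against the various losses. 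Your phrase ``a second Cauchy--Schwarz (or Poisson) pass that averages Weil's square-root cancellation across a secondary family'' gestures at this but does not identify the amplifier or explain why it beats the naive $1/4$; without that mechanism spelled out, the sketch does not close.
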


The work \cite{DFI97} does not involve the spectral theory of automorphic forms, and neither does our method. In other words, our argument relies solely on $\hbox{GL}_1$ harmonic analysis. This is more than just a curiosity, because in particular it means that our method works equally well for holomorphic cusp forms of small weights, including weight $1$. The $L$-functions associated with cusp forms of weight $1$ are of great arithmetic significance, since they include examples such as the class group $L$-functions of imaginary quadratic fields and Artin $L$-functions of degree-two Galois representations. The subconvexity of such $L$-functions has important consequences for the arithmetic statistics of class groups (see \cite[Thm. 2.7--2.8]{DFI02}),  but this problem is widely recognized as technically challenging, as the previous approaches entail deep use of spectral machinery with various surprising phenomena; see, e.g., \cite[pp. 491--492]{DFI02}. In fact, even the full Kuznetsov formula does not appear to have been explicitly written down for this setting.

Our work introduces a new usage of Theorem \ref{thm:DFI} for subconvexity problems. Previously, Theorem \ref{thm:DFI} has been applied in only a few instances of subconvexity \cite{DFI01, DFI02, MunshiHybrid}, all of which also relied on the spectral toolkit for $\mathrm{GL}_2$. More recently, Theorem \ref{thm:DFI} has been refined further by Bettin and Chandee \cite{BC}.
They improved the amount of savings in the bilinear sum
\eqref{eq:DFI}, and also extracted additional savings when averaging over a third variable. More precisely, they showed:
\begin{thm}[Bettin--Chandee] 
\label{thm:BC}
Let $\alpha_m$, $\beta_n$, and $\gamma_k$ be arbitrary complex numbers
supported on $M< m \leq 2M$, $N < n \leq 2N$, and $K\leq k \leq 2K$, respectively.
Let $a$ be a non-zero integer.
Then
\begin{align}
\sum_{k}\sum_{(m,n)=1} \alpha_m \beta_n \gamma_k e\Big(\frac{a k\overline{m}}{n}\Big)
\ &\ll_{} \  \| \alpha \| \| \beta \|\|\gamma\| \Big(1+\frac{|a|K}{MN}\Big)^{1/2} \nonumber
\\&\quad \times  \Big((KMN)^{7/20+\varepsilon}(M+N)^{1/4}+(KMN)^{3/8+\varepsilon} (KM+KN)^{1/8 }\Big),
    \end{align} 
    where  $||\alpha||:= (\,\sum_{m} |\alpha_{m}|^2)^{1/2}$, \, $||\beta||:= (\,\sum_{n} |\beta_{n}|^2)^{1/2}$, and \, $||\gamma||:= (\,\sum_{k} |\gamma_{k}|^2)^{1/2}$.
\end{thm}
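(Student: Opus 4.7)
The plan is to generalize the Duke--Friedlander--Iwaniec argument of Theorem 2.2 to the trilinear setting, using the extra averaging over $k$ to obtain sharper bounds. First, by inserting a smooth dyadic partition and using M\"obius inversion to handle the coprimality condition $(m,n) = 1$, one may work with smooth weights throughout. I would apply Cauchy--Schwarz in the $n$-variable to extract $\|\beta\|$, obtaining
\begin{equation*}
    |S|^2 \, \le \, \|\beta\|^2 \sum_{n} \omega(n/N) \Bigl| \sum_{m,k} \alpha_m \gamma_k\, e(ak \bar m/n) \Bigr|^2
\end{equation*}
for some smooth weight $\omega$. Expanding the square introduces variables $(m_1, m_2, k_1, k_2)$ and an inner $n$-sum with phase $e\bigl(a(k_1 \bar m_1 - k_2 \bar m_2)/n\bigr)$.

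The crucial manipulation is to invoke reciprocity, $\bar m/n \equiv 1/(mn) - \bar n/m \pmod{1}$, to rewrite the inner exponential as a product of a smooth factor $e\bigl(a(k_1/m_1 - k_2/m_2)/n\bigr)$ and an arithmetic factor $e\bigl(-a\bar n\,(k_1 m_2 - k_2 m_1)/(m_1 m_2)\bigr)$, valid when $(m_1 m_2, n) = 1$ and $(m_1, m_2) = 1$ (the remaining cases are handled by a gcd decomposition). Poisson summation over $n$, weighted by $\omega$ times the smooth phase, converts the $n$-sum into a dual sum of classical Kloosterman sums modulo $m_1 m_2$, which Weil's bound estimates as $O((m_1 m_2)^{1/2 + \varepsilon})$. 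The Fourier transform of the smooth factor introduces the prefactor $(1 + |a|K/(MN))^{1/2}$ naturally: when $|a|K$ dominates $MN$ the phase is non-stationary and enlarges the effective support of the dual variable.

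The remaining task is optimization: one separates the ``diagonal'' contribution $k_1 m_2 = k_2 m_1$ (bounded directly using $\|\alpha\|^2 \|\gamma\|^2$ and the size of the support) from the Weil-saved off-diagonal, and one compares different applications of Cauchy--Schwarz. I expect the term $(KMN)^{7/20 + \varepsilon}(M+N)^{1/4}$ to emerge from the regime in which the $k$-averaging gives its strongest return after Poisson, while the term $(KMN)^{3/8 + \varepsilon}(KM+KN)^{1/8}$ should come from a DFI-style bilinear estimate treating $km$ as a combined variable against $n$; the final bound is the minimum (equivalently the sum, up to constants) of the two. The hard part will be the careful bookkeeping of coprimality conditions throughout the reciprocity and Poisson steps, and extracting the precise exponents $7/20$ and $1/8$, which likely requires a subsidiary H\"older or Cauchy--Schwarz applied to the post-Poisson Kloosterman sum to exploit the $k$-averaging one more time. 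Beyond this, no fundamentally new ingredients beyond Cauchy--Schwarz, reciprocity, Poisson summation, and Weil's bound should be required.
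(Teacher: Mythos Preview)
The paper does not prove this theorem; it is quoted as a black box from Bettin and Chandee \cite{BC} and then applied in Lemma \ref{lemma:Tneq0''bound}. There is therefore no ``paper's own proof'' to compare against.

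That said, your outline is a reasonable high-level summary of the Bettin--Chandee method, which does generalize the DFI scheme (Cauchy--Schwarz, reciprocity, Poisson, Weil) to the trilinear setting. Two comments. First, the term $(KMN)^{3/8+\varepsilon}(KM+KN)^{1/8}$ is not obtained by ``treating $km$ as a combined variable against $n$'' in a DFI-style bilinear estimate; rather, both terms in the Bettin--Chandee bound arise from the same trilinear analysis, with the two pieces coming from different ranges in the post-Poisson optimization (and the second term dominates only when $K$ is large relative to $M,N$). Second, your sentence ``likely requires a subsidiary H\"older or Cauchy--Schwarz applied to the post-Poisson Kloosterman sum to exploit the $k$-averaging one more time'' is where the actual work lies: Bettin and Chandee apply a second Cauchy--Schwarz after Poisson and then exploit cancellation in a resulting sum of Kloosterman sums via the Weil bound, and the exponent $7/20$ emerges from balancing this iterated estimate. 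Your plan is correct in spirit but does not yet contain the key second-step mechanism that produces the specific exponents, so as written it is a plan rather than a proof.
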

We shall use Theorem \ref{thm:BC} instead of Theorem \ref{thm:DFI} to obtain the exponent in \eqref{thm:mainthm}.

\subsection{Comparison with other works}\label{Otherwo}
Table \ref{table:subconvexity} below lists some subconvexity bounds for Rankin--Selberg $L$-functions $L(1/2, f \otimes g)$, where $f$ is cuspidal and varies with level $p\to \infty$ (and $\chi \,(\bmod\, p)$ being the nebentypus), and $g$ is fixed with level $1$. In some cases, $g$ is an Eisenstein series, in which case the bound applies to $L(1/2, f \otimes g) = L(1/2, f)^2$. 
Many of these bounds depend on $\theta$, sometimes in complicated ways. Since not all authors have claimed bounds with explicit dependence on $\theta$, 
for consistency in comparison, we have substituted $\theta = 7/64$ (due to Kim and Sarnak). 

In the following, ``Hol.'', ``Maass'', and ``Eis.'' are abbreviations for holomorphic cusp forms, Maass cusp forms (of weight $0$), and Eisenstein series, respectively.  Also, ``prim.'', ``triv.'', and ``wt.'' stand for ``primitive', ``trivial'', and ``weight'' (for holomorphic cusp forms), respectively.

\begin{table}[h]
\caption{Table of subconvexity bounds}
\label{table:subconvexity}
\begin{tabular}{c|c|c|c|c}
$f$ & $\chi$  & $g$  & $\delta$ as in \eqref{eq:subc} &  citation \\
\hline
Hol. $+$ even wt. &  prim. & Eis. & $ 1/96$  & \cite{DFI01} \\
Hol./Maass & prim. & Eis. & $ 1/23041$ &  \cite{DFI02} \\
Hol. $+$ even wt. & triv. & Hol./Maass  & $ 1/80$ &  \cite{KMV02} \\ 
Hol./Maass & prim. & Hol.  & $ 1/1057$ & \cite{Mi04} \\ 
Hol./Maass & prim./triv.  & Hol./Maass/Eis. & $1/2648$ & \cite{HarcosMichel} \\
Hol./Maass & prim./triv. & Hol./Maass/Eis. & $1/1413$ & \cite{Harcos} \\
Hol./Maass & prim./triv. & Hol./Maass/Eis. & not computed &  \cite{MV} \\
 Hol.$+$even wt./Maass & prim. & Eis. & $1/64$ &  \cite{BlomerKhan} \\
Hol./Maass & triv. & Hol./Maass & $ 25/962$ &  \cite{Raju} \\
Hol./Maass & triv. & Hol./Maass/Eis. & $ 25/384$ &  \cite{Zacharias}
\end{tabular}
\end{table}

\subsection{Acknowledgements}
This work was initiated at an AIM workshop, Delta Symbols and the Subconvexity Problem, held October 16-20, 2023.  It is a pleasure to thank the AIM staff and organizers for the excellent working environment.  We thank Roman Holowinsky and Ritabrata Munshi for their encouragement, and also thank Gergely Harcos for his helpful remarks.

\subsection{Notations} Throughout the article, we use the following standard conventions in analytic number theory. We use $\varepsilon$ to denote an arbitrarily small positive constant that may differ from line to line. 
We write $A\asymp B$ if $A\ll B$ and $B\ll A$. A quantity $A$ is said to be ``very small'' if $A\ll_M p^{-M}$ for any $M>0$. We suppress the dependence on $\varepsilon$, $g$ and the weight/Laplace eigenvalue of $f$ in the asymptotic notations. 

\section{Sketch of proof}
\label{section:sketch}
 In this section, we sketch the proof of Theorem \ref{thm:mainthm}. This sketch is not intended to be fully rigorous but rather to provide a conceptual road map for the proof. After applying the approximate functional equation, the task of proving a subconvex bound for $L(1/2, f\otimes g)$ reduces to establishing the estimate
\begin{equation}
	S(N) := \sum_{n\asymp N} \lambda_f(n) \lambda_g(n) \ll \sqrt{N}p^{1/2-\delta}
\end{equation}
for some $\delta>0$,
uniformly for $N \ll p^{1+\varepsilon}$.

\subsection{Amplification}
For our method, it is essential to apply an \emph{amplification} to $S(N)$ as follows. Let $L\geq1$ be a parameter at our disposal, and $\gamma_{\ell}$ be $1$ if $\ell$ is prime with $\ell \asymp L$ and $0$ otherwise. Then 
\begin{align}\label{temp}
    S(N)\approx \frac{1}{\mathcal{A}_1(L)}\sum_{\ell \asymp L}\gamma_{\ell} \overline{\lambda_f(\ell)}\sum_{n\asymp N}\lambda_f(n \ell )\lambda_g(n), \hspace{15pt} \text{with} \hspace{10pt} \mathcal{A}_1(L) := \sum_{\ell \asymp L} \gamma_{\ell } |\lambda_f(\ell )|^2. 
\end{align}
To simplify our sketch, we assume the lower bound \eqref{eq:conditionalamplifier} for $\mathcal{A}_{1}(L)$, which is expected to be true though currently unproven. 
This is a pervasive, but well-known technical issue with amplifiers. A standard work-around, first presented in \cite{DFI2}, exploits the fact that $\lambda_f(\ell)$ and $\lambda_f(\ell^2)$ cannot be simultaneously small.
See Section \ref{sect.Amp} for our precise choice of amplifier.

\subsection{Delta method}
We use the DFI delta method (Lemma \ref{lemma:DeltaSymbol})
with $C \asymp \sqrt{NL}$,
giving 
\begin{align}
		S(N)&\approx\frac{1}{\mathcal A_1(L)} \sum_{\ell \asymp L}\gamma_{\ell} \overline{\lambda_f(\ell)}\sum_{m\asymp NL}\lambda_f(m)\sum_{n\asymp N}\lambda_g(n)\delta(m=n\ell)\\&\approx \  
	\frac{1}{C\mathcal A_1(L)} \sum_{\ell \asymp L}\gamma_\ell\overline{\lambda_f(\ell)} 
	\sum_{c\ll C} \frac{1}{c}
	\quad \sumstar_{h \shortmod{c}}
	\sum_{m\asymp NL} \lambda_f(m)	e_c(hm)\sum_{n\asymp N} \lambda_g(n)
	e_c(-hn\ell ).
\end{align}

\subsection{Voronoi and Cauchy--Schwarz}

Applying the Voronoi formula (Lemma \ref{prop:voronoi}) to the $m$ and $n$-sums in the last expression, we obtain, in the generic case when $N\asymp p$, $c\asymp C$, the following:
\begin{align}\label{spsum}
	S(p)
	\asymp
	\frac{1}{L^{3/2}} \sum_{\ell\asymp L}\gamma_\ell\overline{\lambda_f(\ell)} \sum_{c\asymp \sqrt{Lp}} \frac{\chi(c)}{c}
		\sum_{m \ll p} \overline{\lambda_f(m)}
	\sum_{n \ll  L} \lambda_g(n)  S(0,pn-{\ell}m;c),
\end{align}
where $S(0,pn-\ell m;c)$ is the Ramanujan sum. A trivial estimate using $\lambda_{f}, \lambda_{g}, \gamma_{\ell} \ll 1$ gives $S(p)\ll \sqrt{L}p$, which is away from the ``boundary'' by a factor of $\sqrt{L}$. 

In this sketch, we consider only the terms with $(c, \ell) = 1$; the contribution from $(c,\ell) > 1$ can be estimated trivially after applying the Voronoi formula; see Lemma \ref{lemma:S0NAbound}.
Also, the contribution from $p\mid m$ in \eqref{spsum} can be treated easily; see Lemma \ref{lemma:trivialbound}.  The terms with $p \mid m$ differ from those with $(p,m) = 1$, since the former case allows for  $pn-\ell m =0$, causing $S(0,pn-\ell m;c)$ to be large for all $c$.

Moving the $m$-sum to the outermost position and applying the Cauchy--Schwarz inequality to the sum in \eqref{spsum}, we observe that 
  $  |S(p)|^2\ll L^{-4} T'$,
where 
\begin{align}
    T'\approx &\sum_{\ell_1, \ell_2\asymp L} \gamma_{\ell_1}\gamma_{\ell_2}\overline{\lambda_f(\ell_1)}\lambda_f(\ell_2) \sum_{n_1,n_2\ll L} \lambda_g(n_1) \lambda_g(n_2)\sum_{c_1, c_2\asymp \sqrt{Lp}} \chi(c_1\overline{c_2}) \nonumber\\
     & \times \sum_{\substack{ m\ll p \\ (m,p) = 1}} S(0,pn_1-\ell_1 m;c_1)S(0,pn_2-\ell_2m;c_2)\label{tsum}.
\end{align}
Write $T' = T'' + T^{(0)}$, where $T''$ and $T^{(0)}$ consists of the terms with $\ell_2n_1\neq \ell_1n_2$ and $\ell_2 n_1 = \ell_1 n_2$ respectively.  A trivial bound is sufficient to estimate $T^{(0)}$ satisfactorily; see Lemma 
\ref{lemma:Tosc0bound}.

\subsection{Poisson summation} 

This step, together with the next, represents the major innovation of this article. We exploit the cancellation in the correlation sum of Ramanujan sums in \eqref{tsum}, an arithmetic structure not present in previous works on the subconvexity of $L(1/2, f\otimes g)$. Most importantly, this structure is \emph{independent} of the choices $\hbox{GL}_{2}$ automorphic forms $f$ and $g$. This is the primary reason why we are able to achieve strong uniformity in our subconvex bound and successfully overcome the limitations of delta methods discussed earlier.

We apply Poisson summation to analyze $T''$. To do so, we first extend the $m$-sum in  $T''$ to include all $m \in \mz$, then subtract back the terms with $p \mid m$, say $T'' = T - T^{(00)}$ accordingly.  Again, a trivial estimation to  $T^{(00)}$ is adequate; see Lemma \ref{lemma:Tosc00bound}. Now, Poisson summation for the sum over $m\asymp p$ in $T$ gives
\begin{align}
    T\approx& \ p\mathop{\sum_{\ell_1, \ell_2\asymp L}\sum_{n_1,n_2\ll L}}_{\ell_2n_1\neq \ell_1n_2}\gamma_{\ell_1}\gamma_{\ell_2}\overline{\lambda_f(\ell_1)}\lambda_f(\ell_2) \lambda_g(n_1) \lambda_g(n_2)\sum_{c_1, c_2\asymp \sqrt{Lp}} \chi(c_1\overline{c_2}) \nonumber\\
     & \ \times \sum_{|m|\ll L}\mathop{\sumast_{h_1\Mod{c_1}}\sumast_{h_2\Mod{c_2}}}_{c_2h_1\ell_1+c_1h_2\ell_2\equiv m \Mod{c_1c_2}}
     e_{c_1}(h_1 n_1 p + h_2 n_2 p).
\end{align}

We first analyze  the contribution of the zero frequency  ($m=0$) to $T$. Here, we essentially have $c_1=c_2$, and the constraint in the $h_1,h_2$-sums eventually reduces to $\ell_2n_1\equiv \ell_1n_2\Mod{c_1}$.  The sum over $c_1$ is controlled by the number of divisors of $\ell_2 n_1 - \ell_1 n_2$. So,
this contribution to $T$ is $\ll L^{9/2}p^{3/2}$ (see Lemma \ref{lemma:T0'bound}), which is satisfactory when $L$ is small enough.

For the non-zero frequencies ($m\neq 0$), the character sums over $h_{1}, h_{2}$ can be evaluated in closed forms; see Lemma \ref{lemma:ShatEvaluation}. In the generic case when $m\asymp L$ and $(c_1,c_2)=(c_1,\ell_1)=(c_2,\ell_2)=1$,
we arrive at
\begin{align}
\label{befadd}
  T \approx \ &p\mathop{\sum_{\ell_1, \ell_2\asymp L}\sum_{n_1,n_2\ll L}}_{\ell_2n_1\neq \ell_1n_2} \gamma_{\ell_1}\gamma_{\ell_2}\overline{\lambda_f(\ell_1)}\lambda_f(\ell_2)  \lambda_g(n_1) \lambda_g(n_2) \sum_{c_1, c_2\asymp \sqrt{Lp}} \chi(c_1\overline{c_2}) \nonumber\\
     &\times \sum_{m \approx L}
      e_{c_1}\left(\overline{c_2 \ell_1} m n_1 p\right)
      e_{c_2}\left(\overline{c_1 \ell_2} m n_2 p\right).
\end{align}
By using the additive reciprocity to 
$e_{c_1}(\overline{c_2\ell_1}mn_1p)$ and observing that $e_{c_1 c_2 \ell}(m n_1 p)$ can be absorbed into the weight function as $mn_1p\asymp c_1c_2\ell_1$, we have roughly 
\begin{align}
   T\approx  &\ p\mathop{\sum_{\ell_1, \ell_2\asymp L} \sum_{n_1,n_2\ll L}}_{\ell_2n_1\neq \ell_1 n_2}\gamma_{\ell_1}\gamma_{\ell_2}\overline{\lambda_f(\ell_1)}\lambda_f(\ell_2)  \lambda_g(n_1) \lambda_g(n_2)\sum_{c_1, c_2\asymp \sqrt{Lp}} \chi(c_1\overline{c_2}) \nonumber\\
     & \ \times \sum_{m\asymp L}e_{c_2\ell_1}(\overline{c_1\ell_2}mp(n_2\ell_1-n_1\ell_2)).
\end{align}

\subsection{Cancellation in Kloosterman fractions}
With $A := c_1 \ell_2$ and $B := c_2 \ell_1$, we arrange $T$ as follows:
\begin{align}
   T\approx &\   p\mathop{\sum_{\ell_1, \ell_2\asymp L} \sum_{n_1,n_2\ll L}}_{\ell_2n_1\neq \ell_1 n_2} \gamma_{\ell_1}\gamma_{\ell_2}\overline{\lambda_f(\ell_1)}\lambda_f(\ell_2) \lambda_g(n_1) \lambda_g(n_2)  
   \\
   &\times\sum_{m\asymp L} \, \bigg(\sum_{\substack{A, B \asymp L^{3/2} p^{1/2} \\ \ell_2 \mid A, \thinspace \ell_1 \mid B}} 
   \chi(A/\ell_2) \overline{\chi}(B/\ell_1)
      e_{B}(\overline{A}mp(n_2\ell_1-n_1\ell_2))\bigg).
\end{align} 

If one applies Theorem \ref{thm:DFI} to the \emph{bilinear form} over $A, B$, with the following choices of parameters: $|a|= mp \cdot  |n_2 \ell_1 - n_1 \ell_2| \ll L^3 p$, $M = N = L^{3/2} p^{1/2}$, and $\alpha_A$ supported on integers divisible by $\ell_2$ (and likewise, $\beta_B$ supported on multiples of $\ell_1$), then we have
\begin{align}\label{mneq0est}
    T \ll L^7p^2(L^{3/2}\sqrt{p})^{-1/48}.
\end{align}
However, \eqref{mneq0est} can be improved by invoking Theorem \ref{thm:BC} for the \emph{trilinear} form over $m, A, B$:
$$
T\ll 
L^7p^2L^{-3/20}(L^{3/2}\sqrt{p})^{-1/20}=L^7p^2 (L^{9/2}\sqrt{p})^{-1/20};
$$
see Lemma \ref{lemma:Tneq0''bound} for the precise result.

In total, we have
\begin{align}
    |S(p)|^2  \ll 
    L^{-1} p^2 + L^3p^2 (L^{9/2}\sqrt{p})^{-1/20} + (\text{lower order terms}).
\end{align}
Choosing 
$L = p^{1/151}$, 
it follows that
\begin{align}
    S(p)\ll p^{ 301/302}, \hspace{20pt} \text{ and } \hspace{20pt} L(1/2, f \otimes g) \ll p^{1/2-1/302},
\end{align}
which is precisely \eqref{eq:RemarkTheoremConditionalAmplifier} under assumption \eqref{eq:conditionalamplifier}.

\section{Preliminary results}\label{sect:prelim}
In this section, we list the standard material that will be used in the proof.

\subsection{Delta symbol}
The following is a variant of the Duke–Friedlander–Iwaniec
delta method (cf. \cite{DFI94}) with a simpler weight function.
\begin{lemma}(\cite[Lemma 3.1]{leung2024shifted})
\label{lemma:DeltaSymbol}
Let $n \in \mathbb{Z}$ be such that $|n| \ll N$, and let $C > N^{\varepsilon}$. Let $U \in C_{c}^{\infty}(\mathbb{R})$ and $W \in C_{c}^{\infty}([-2, -1] \cup [1, 2])$ be non-negative even functions such that $U(x) = 1$ for $x \in [-2, 2]$. Then  
\begin{equation*}
\delta(n = 0) = \frac{1}{C} \sum_{c = 1}^{\infty}\sum_{d = 1}^{\infty} \frac{1}{cd} 
S(0,n;c) F\left(\frac{cd}{C}, \frac{n}{cdC} \right),
\end{equation*}
where $F$ 
is supported on $|x| + |y| \ll 1$, satisfies $F^{(i,j)}(x,y) \ll_{i,j} 1$,
and is
defined by
\begin{equation}
\label{eq:Fdef}
F(x, y) \coloneqq C\left(\sum_{c = 1}^{\infty} W \left(\frac{c}{C} \right)\right)^{-1}\left(W(x) U(x) U(y)-W(y) U(x) U(y)\right).
\end{equation}
\end{lemma}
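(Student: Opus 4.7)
The plan is to verify the identity by direct computation, relying on the classical Ramanujan-sum identity
$$\sum_{c \mid q} S(0, n; c) \, = \, q \cdot \mathbf{1}[q \mid n],$$
which follows from $S(0, n; c) = \sum_{d \mid \gcd(c, n)} d \, \mu(c/d)$ combined with $\sum_{c \mid q} \varphi(c) = q$.

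I would first substitute the explicit formula \eqref{eq:Fdef} into the right-hand side, splitting it as a difference $A - B$, where
$$A := \Bigl(\sum_c W(c/C)\Bigr)^{-1} \sum_{c, d \geq 1} \frac{S(0, n; c)}{cd} \, U\!\left(\frac{cd}{C}\right) U\!\left(\frac{n}{cdC}\right) W\!\left(\frac{cd}{C}\right),$$
and $B$ is the analogous sum with $W(cd/C)$ replaced by $W(n/(cdC))$. Setting $q = cd$ and applying $\sum_{cd = q} S(0, n; c)/(cd) = q^{-1} \sum_{c \mid q} S(0, n; c) = \mathbf{1}[q \mid n]$ collapses each inner sum, giving
$$A = \Bigl(\sum_c W(c/C)\Bigr)^{-1} \sum_{q \mid n} U(q/C) \, U(n/(qC)) \, W(q/C),$$
with $B$ being the same expression but with $W(n/(qC))$ in place of $W(q/C)$.

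For $n \neq 0$, I would apply the involution $q \leftrightarrow r := |n|/q$ on the positive divisors of $|n|$ in the sum for $B$: evenness of $U$ yields $U(q/C) = U(n/(rC))$ and $U(n/(qC)) = U(r/C)$, while evenness of $W$ gives $W(n/(qC)) = W(r/C)$; this transforms $B$ into exactly $A$, so $A - B = 0 = \delta(n = 0)$. For $n = 0$, every $q \geq 1$ divides $n$, and $W(0) = 0$ forces $B = 0$, while $U(0) = 1$ together with $U(q/C) = 1$ on the support of $W$ give $A = (\sum_c W(c/C))^{-1} \sum_q W(q/C) = 1$, as required. The auxiliary claims on $F$ are immediate from the formula: $F(x, y) \neq 0$ requires either $W(x)$ or $W(y)$ to be nonzero, which forces $|x| + |y| \ll 1$, and the derivative bounds follow from the smoothness of $U$, $W$ and the normalization $\sum_c W(c/C) \asymp C$, ensured by $C > N^{\varepsilon}$. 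The construction of $F$ is dictated by this argument: the \emph{antisymmetric} combination $W(x) - W(y)$ multiplied by the symmetric envelope $U(x) U(y)$ is precisely matched with the involution on pairs $(cd/C, n/(cdC))$ coming from the Ramanujan-sum identity, so no genuine obstacle arises beyond careful bookkeeping around the parities of $U$ and $W$.
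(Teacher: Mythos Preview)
Your argument is correct. The paper does not give its own proof of this lemma; it is quoted from \cite[Lemma 3.1]{leung2024shifted} as a known input. Your derivation via the Ramanujan-sum identity $\sum_{c\mid q} S(0,n;c)=q\cdot\mathbf{1}[q\mid n]$, the substitution $q=cd$, and the divisor involution $q\leftrightarrow |n|/q$ (using the evenness of $U$ and $W$) is exactly the standard route to this form of the delta symbol, and your treatment of the $n=0$ case and of the auxiliary claims on the support and derivatives of $F$ is accurate.
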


 We write $F(x,y) = \sum_{i=1}^{2} F_i(x) U_i(y)$ according to \eqref{eq:Fdef} where $F_i, U_i\in C_c^\infty([-2,-1]\cup[1,2])$. 

\subsection{Automorphic data}
For $f\in S_{k}^{*}(p, \chi)$ and $g\in S_{k'}(1)$, we have the (normalized) Fourier expansions
\begin{align}
    f(z) \ = \  \sum_{n\ge 1} \ \lambda_{f}(n)n^{\frac{k-1}{2}} e(nz) \hspace{10pt} \text{ and } \hspace{10pt}   g(z) \ = \  \sum_{n\ge 1} \ \lambda_{g}(n)n^{\frac{k'-1}{2}} e(nz) \hspace{15pt} (z \ \in \ \mathbb{H}). 
\end{align}
Let $\eta_f$ be the eigenvalue of the complex conjugate of the Fricke involution (see \cite[Prop. 14.14]{IK04}). For $\re s \gg  1$, we define
\begin{align}
    L(s, f\otimes g) \ := \ L(2s, \chi) \sum_{n=1}^{\infty} \ \frac{\lambda_{f}(n)\lambda_{g}(n)}{n^{s}}.
\end{align}
It admits an entire continuation and satisfies the functional equation of the form
\begin{align}\label{eq:FE}
    \Lambda(s, f\otimes g)  \ :=  \ \left(\frac{p}{4\pi^2}\right)^{s} \Gamma\left(s + \frac{|k-k'|}{2}\right) \Gamma\left(s+ \frac{k+k'}{2}-1\right) L(s, f\otimes g) \ = \ \eta_{f}^2 \Lambda(1-s, \overline{f} \otimes g);
\end{align}
see \cite[Chp. 5.11]{IK04}.
Note that the arithmetic conductor of $L(s, f\otimes g)$ is $p^2$.  
Let  $w_N\in C_{c}^{\infty}[N/2,2N]$ which satisfies $w_N^{(j)}(x) \ll N^{-j}$. By the approximate functional equation (\cite[Thm. 5.3]{IK04}) and a smooth partition of unity, we have 
\begin{align}
\label{eq:AFEdyadicVersion}
    L(1/2, f\otimes g) \ \ll \ p^{\varepsilon} \, \sum_{\substack{N  \, \text{ dyadic} \\ N \ll p^{1+\varepsilon}}} \  
    \frac{|S(N)|}{\sqrt{N}},
 \end{align}
 where 
 \begin{equation}\label{eq:exp sum}
     S(N):=\sum_{n} 
 \, \lambda_{f}(n)\lambda_{g}(n) w_N(n). 
 \end{equation}

 \begin{lemma}[Voronoi Formula]\label{prop:voronoi}
Let $h\in C_{c}^{\infty}(0, \infty)$, $a, c\in\mathbb{Z}$ and $p$ be a prime such that $(ap,c)=1$. 
 If $f\in S_{k}^{*}(p, \chi)$, then  
\begin{align}
     \sum_{n\ge 1} \lambda_{f}(n)e\left(\frac{an}{c}\right)h(n) \ = \  2\pi i^{k} \frac{\chi(-c)}{c} \frac{\eta_{f}(p)}{\sqrt{p}} \sum_{n\ge 1} \ \overline{\lambda_{f}}(n) e\left(\frac{-\overline{ap}n}{c}\right) \int_{0}^{\infty} \ h(x) J_{k-1}\left(\frac{4\pi}{c}\sqrt{\frac{nx}{p}}\right) \ dx.
\end{align}
 \end{lemma}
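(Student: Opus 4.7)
\medskip

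\noindent\textbf{Proof proposal.} My plan is the standard derivation via Mellin inversion together with the functional equation for the additively twisted $L$-function; this is essentially the Voronoi formula of Kowalski--Michel--VanderKam, adapted to weight $k$ and nebentypus $\chi$. First I would apply Mellin inversion to write
\begin{equation*}
    \sum_{n\ge 1} \lambda_f(n) e\!\left(\tfrac{an}{c}\right) h(n) \; = \; \frac{1}{2\pi i}\int_{(\sigma)} \tilde h(s)\, L_f(s, a/c)\, ds,
\end{equation*}
where $\sigma$ is large, $\tilde h$ denotes the Mellin transform of $h$, and $L_f(s, a/c) := \sum_{n\ge 1}\lambda_f(n) e(an/c) n^{-s}$. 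The right-hand side is absolutely convergent in that region, and the rapid decay of $\tilde h$ along vertical lines will justify every subsequent contour shift.

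The crux is to establish a functional equation $s \leftrightarrow 1-s$ for the additively twisted Dirichlet series $L_f(s, a/c)$. Since $(ap, c) = 1$, in particular $(c, p) = 1$, so I would choose a matrix $\gamma = \bigl(\begin{smallmatrix} a & b \\ pc & d\end{smallmatrix}\bigr) \in \Gamma_0(p)$ with $ad - pbc = 1$ and exploit the identity
\begin{equation*}
    f\!\left(\frac{a}{c} + \frac{iy}{c\sqrt{p}}\right) \; = \; \chi(d)\, (\text{automorphy factor})\, f|_{W_p}\!\left(\text{translated point}\right),
\end{equation*}
obtained by combining the modularity of $f$ under $\gamma$ with the action of the Fricke involution $W_p$. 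Unfolding both sides as Fourier series and taking Mellin transforms in $y$ produces the functional equation of the form
\begin{equation*}
    (2\pi)^{-s}\Gamma(s + \tfrac{k-1}{2})\, L_f(s, a/c) \; = \; \varepsilon(a,c)\, \left(\frac{c\sqrt{p}}{2\pi}\right)^{\!\!1-2s}\!\!(2\pi)^{-(1-s)} \Gamma(1-s + \tfrac{k-1}{2})\, L_{\overline f}(1-s, -\overline{ap}/c),
\end{equation*}
where the root-number-like factor $\varepsilon(a,c) = i^{k} \chi(-c)\, \eta_f(p)/\sqrt{p}$ collects the character value $\chi(-c)$ (from $\chi(d) = \chi(-c)$ modulo $c$) and the Fricke pseudo-eigenvalue $\eta_f(p)/\sqrt{p}$. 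The shift $-\overline{ap}/c$ on the dual side emerges from the cusp at $-d/(pc) \equiv -\overline{ap}/c \pmod{1}$ where the translated point sits; this is exactly the place where the hypothesis $(ap,c) = 1$ is needed, guaranteeing that $\overline{ap}$ makes sense modulo $c$.

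With the functional equation in hand, I would shift the contour from $\sigma$ to $1-\sigma$ (no poles are crossed since $h$ is smooth and compactly supported away from $0$), substitute the dual Dirichlet series, and exchange summation and integration to obtain
\begin{equation*}
    \sum_{n\ge 1}\overline{\lambda_f}(n)\, e\!\left(\tfrac{-\overline{ap}\,n}{c}\right) \cdot \frac{1}{2\pi i}\int_{(1-\sigma)} \tilde h(s) \left(\frac{c\sqrt{p}}{2\pi}\right)^{\!\!1-2s} \frac{\Gamma(1-s+\tfrac{k-1}{2})}{\Gamma(s+\tfrac{k-1}{2})} n^{s-1}\, ds.
\end{equation*}
The inner integral is recognized, via Parseval applied to the Mellin pair for $J_{k-1}$ (namely $\int_0^\infty J_{k-1}(u) u^{2s-1} du = 2^{2s-1}\Gamma(s+\tfrac{k-1}{2})/\Gamma(\tfrac{k+1}{2}-s)$), as
\begin{equation*}
    \frac{2\pi i^k}{c\sqrt{p}}\int_0^\infty h(x)\, J_{k-1}\!\left(\frac{4\pi}{c}\sqrt{\frac{nx}{p}}\right) dx,
\end{equation*}
after unfolding $\tilde h(s) = \int_0^\infty h(x) x^{s-1} dx$. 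Assembling the pieces reproduces the claimed formula.

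The main obstacle, and the only genuinely delicate step, is the bookkeeping in the second paragraph: pinning down the exact constant $\varepsilon(a,c)$, in particular verifying that the character factor is $\chi(-c)$ (not $\chi(c)$ or $\overline{\chi}$) and that the Fricke pseudo-eigenvalue appears as $\eta_f(p)/\sqrt{p}$ rather than its conjugate. Once the automorphy factor from $\gamma \in \Gamma_0(p)$ and the explicit matrix decomposition $\bigl(\begin{smallmatrix} a & b \\ pc & d\end{smallmatrix}\bigr) = W_p \cdot \bigl(\begin{smallmatrix} * & * \\ c & * \end{smallmatrix}\bigr) \cdot (\text{translation})$ are worked out, everything else is a routine Mellin/Bessel computation.
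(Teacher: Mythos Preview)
Your proposal is correct and follows the standard route. The paper itself does not give a proof of this lemma at all: it simply cites \cite[Appendix~A]{KMV02}, which contains exactly the Mellin-inversion/functional-equation argument you have sketched (indeed you identify it as the Kowalski--Michel--VanderKam formula yourself). So there is nothing to compare; you have supplied the content of the reference the paper defers to.
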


\begin{proof}
   See \cite[Appendix A]{KMV02}. 
\end{proof}

Let $f \in S_k^{*}(p, \chi)$. We now state some properties for the Hecke eigenvalues $\lambda_f(n)$, which will be frequently used. We have the Hecke relation: 
\begin{equation}
\label{eq:HeckeRelation}
    \lambda_f(mn) = \sum_{d|(m,n)} \lambda_f(m/d) \lambda_f(n/d) \mu(d) \chi(d), 
\end{equation}
and by M\"{o}bius inversion, it follows that
$$
\label{eq:HeckeRelation0}
   \lambda_f(m) \lambda_f(n) = \sum_{d|(m,n)} \lambda_f(mn/d^2) \chi(d).
$$
Recall that 
\begin{equation}
\label{eq:HeckeRelation2}
    \lambda_f(n) = \chi(n) \overline{\lambda_f(n)}, \qquad \text{for} \qquad  (n,p) = 1;
\end{equation}
see \cite[p. 372]{IK04}, and
\begin{align}
\label{BoundAtp}
    |\lambda_f(p)|\leq 1; 
\end{align}
see \cite[Prop A.2]{KMV02}.  We also have the following $\ell_2$-bound from  Iwaniec \cite{IwaniecSpectralGrowth}:
\begin{align}\label{RS}
        \sum_{n\ll N}|\la_f(n)|^2\ll p^\epsilon N^{1+\varepsilon}.
\end{align}

\subsection{Archimedean analysis}
First we recall the definition of a family of inert functions.  Suppose $\mathcal{F}$ is an index set and $X: \mathcal{F} \rightarrow \mr_{\geq 1}$ is a function. We denote $X(i)$ as $X_i$, for $i \in \mathcal{F}$.
\begin{defn}
 A family of smooth functions $\{ w_i : i \in \mathcal{F} \}$ supported on a Cartesian product of dyadic intervals in $\mr_{>0}^d$ is called $X$-inert if for each $j = (j_1, \dots, j_d) \in \mz_{\geq 0}^d$ we have
 \begin{equation}
  \sup_{i \in \mathcal{F}}
  \sup_{(x_1, \dots, x_d) \in \mr_{>0}^d}
  \frac{x_1^{j_1} \dots x_d^{j_d} |w_i^{(j_1, \dots, j_d)}(x_1, \dots, x_d)|}{ X_i^{j_1 + \dots + j_d}} < \infty.
 \end{equation}
\end{defn}
For brevity, but as an abuse of terminology, we may refer to a single function being inert when we should properly say that it is part of an inert family.
In practice,  we will often have functions depending on a number of variables and it is unwieldy to list them all. 
Our convention will be to write $w(x_1, \dots, x_k; \cdot)$ where `$\cdot$' represents some suppressed list of variables for which $w$ is inert; we will use this notation even for $k=0$.

We now state some properties of inert families. Interested readers can find the proofs of these lemmas in \cite{BKY} and \cite[Sec. 2 and 3]{KPY}.

\begin{lemma}[Integration by parts]
\label{lemma:integrationbyparts}
Suppose that $\{ w_i(t) \}$ is a family of $X$-inert functions supported on $[Z, 2Z]$, with $w^{(j)}(t) \ll (Z/X)^{-j}$, for each $j=0,1,2, \dots$.  Suppose that $\phi$ is smooth and satisfies $\phi^{(j)}(t) \ll \frac{Y}{Z^j}$ for $j \geq 1$, for some $Y/X \geq R \geq 1$ and all $t$ in the support of $w$.  If $|\phi'(t)| \gg \frac{Y}{Z}$ for all $t \in [Z,2Z]$ then for arbitrarily large $A > 0$ we have
\begin{equation}
 \intR w(t) e^{i \phi(t)} dt \ll_A Z R^{-A}.
\end{equation}
\end{lemma}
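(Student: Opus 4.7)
The plan is to prove this by iterated integration by parts using the differential operator
$$D^{*}f \, \coloneqq \, -\frac{d}{dt}\!\left(\frac{f(t)}{i\phi'(t)}\right).$$
Since $e^{i\phi(t)} = \frac{1}{i\phi'(t)}\frac{d}{dt}\, e^{i\phi(t)}$ on the support of $w$ (where $\phi'$ is non-vanishing by hypothesis), a single integration by parts gives $\int w\, e^{i\phi} dt = \int (D^{*}w)\, e^{i\phi} dt$ with no boundary terms, because $w$ is compactly supported on $[Z, 2Z]$. Iterating $A$ times, we obtain
$$\intR w(t)\, e^{i\phi(t)} \, dt \, = \, \intR (D^{*A} w)(t)\, e^{i\phi(t)} \, dt,$$
and it then suffices to establish the pointwise estimate $|(D^{*A}w)(t)| \ll_{A} R^{-A}$ on $[Z, 2Z]$, since integrating the trivial bound over an interval of length $Z$ yields the desired $Z R^{-A}$.

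The key step is an inductive bookkeeping lemma: for each $n \geq 0$, the function $D^{*n} w$ can be written as a sum of $O_{n}(1)$ terms, each of the form
$$\frac{w^{(a_{0})}(t) \, \prod_{i=1}^{k} \phi^{(b_{i})}(t)}{(\phi'(t))^{\,n+k}},$$
where $a_{0} \geq 0$, $k \geq 0$, each $b_{i} \geq 2$, and the indices satisfy the balance relation $a_{0} + \sum_{i=1}^{k}(b_{i}-1) = n$. One verifies this by induction on $n$: applying $D^{*}$ to such a term produces, via the product and quotient rules, terms of the same shape with $n$ replaced by $n+1$, where the derivative either hits $w^{(a_{0})}$ (incrementing $a_{0}$), hits one of the $\phi^{(b_{i})}$ (incrementing $b_{i}$), or hits the denominator $(\phi')^{n+k}$ (producing a new $\phi^{(2)}$ factor, so $k$ increases by $1$).

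Once this structural form is established, the size bound is immediate from the hypotheses $|w^{(j)}| \ll (X/Z)^{j}$, $|\phi^{(j)}| \ll Y/Z^{j}$ for $j \geq 1$, and $|\phi'| \gg Y/Z$: each individual term is bounded by
$$\Big(\frac{X}{Z}\Big)^{a_{0}} \prod_{i=1}^{k} \frac{Y}{Z^{b_{i}}} \cdot \Big(\frac{Z}{Y}\Big)^{n+k} \, = \, X^{a_{0}} \, Y^{-n} \, Z^{\,n+k-a_{0}-\sum b_{i}} \, = \, X^{a_{0}} \, Y^{-n},$$
where the $Z$-exponent vanishes thanks to the balance relation $\sum b_{i} = n - a_{0} + k$. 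Since we may assume $X \geq 1$ (otherwise replacing $X$ by $1$ leaves $w$ inert and only enlarges $R$), we have $X^{a_{0}} \leq X^{n}$, so each term is at most $(X/Y)^{n} = R^{-n}$, giving $|D^{*n} w(t)| \ll_{n} R^{-n}$ as required. Taking $n = A$ and integrating over $[Z, 2Z]$ yields the bound $\ll_{A} Z R^{-A}$.

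The only potentially delicate point is the inductive verification of the structural form for $D^{*n}w$ and the corresponding cardinality bound on the number of terms; but since $A$ is treated as a fixed (though arbitrary) constant, the implicit constant $C_{A}$ growing like $A!$ (or any polynomial/factorial function of $A$) is harmless. Everything else is routine once the balance relation $a_{0} + \sum(b_{i}-1) = n$ is correctly tracked through each application of $D^{*}$.
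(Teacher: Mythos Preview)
Your proof is correct and is the standard iterated integration-by-parts argument. The paper itself does not prove this lemma; it simply cites \cite{BKY} and \cite[Sec.~2--3]{KPY}, where essentially the same argument (tracking the combinatorial structure of $D^{*n}w$ and using the balance relation to see that the $Z$-powers cancel) appears. One minor remark: your parenthetical about replacing $X$ by $1$ when $X<1$ is unnecessary here, since the paper's definition of an $X$-inert family already requires $X:\mathcal{F}\to\mathbb{R}_{\ge 1}$, so $X\ge 1$ is automatic and $X^{a_0}\le X^n$ follows directly from $a_0\le n$.
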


\begin{prop}[Stationary phase] 
\label{prop:statphase}
 Suppose $\{ w_i(t_1, \dots, t_d) \}$ is an $X$-inert family supported on $t_1 \asymp Z$ and $t_k \asymp X_k$ for $k=2,\dots, d$.  Suppose that on the support of $w_i$, that $\phi = \phi_i$ satisfies
 \begin{equation}
  \frac{\partial^{a_1 + \dots + a_d}}{\partial t_1^{a_1} \dots \partial t_d^{a_d}}
  \phi(t_1, \dots, t_d) \ll \frac{Y}{Z^{a_1}} \frac{1}{X_2^{a_2} \dots X_d^{a_d}},
 \end{equation}
for all $(a_1, \dots, a_d) \in \mathbb{Z}_{\geq 0}$.  Suppose $\phi''(t_1, t_2, \dots, t_d) \gg \frac{Y}{Z^2}$ (here and below $\phi'$ and $\phi''$ refer to the derivative with respect to $t_1$) on the support of $w$.  Moreover, suppose there exists $t_0 \in \mr$ such that $\phi'(t_0) = 0$, and that $Y/X^2 \geq R \geq 1$.  Then
\begin{equation}
 \intR e^{i \phi(t_1, \dots, t_d)} w(t_1, \dots, t_d) dt_1
 = \frac{Z}{\sqrt{Y}} e^{i \phi(t_0, t_2, \dots, t_d)} W(t_2, \dots, t_d) + O(Z R^{-A}),
\end{equation}
where $W = W_i$ is some new family of $X$-inert functions.

\end{prop}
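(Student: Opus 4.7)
The plan is to treat $(t_2, \ldots, t_d)$ as parameters and prove the claim by a standard parametric stationary-phase argument analogous to \cite[Prop. 8.2]{BKY} and \cite[Sec. 3]{KPY}. After rescaling $s = t_1/Z$, the integral becomes $Z \intR \widetilde{w}(s;\cdot) e^{i\widetilde{\phi}(s;\cdot)}\, ds$ with $\widetilde{w}$ being $X$-inert on $s \asymp 1$, $\widetilde{\phi}^{(j)}_s \ll Y$ uniformly in $j$, and $\widetilde{\phi}''_{ss}(s_0; \cdot) \gg Y$ at the rescaled stationary point $s_0 = t_0/Z$.

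I would split the $s$-integral into a window $|s - s_0| \leq \rho$ around the stationary point and its complement, with $\rho = \sqrt{R/Y}\, R^{\varepsilon}$; this choice is calibrated so that the boundary value $Y\rho = \sqrt{YR}\, R^{\varepsilon}$ of $|\widetilde{\phi}'|$ exceeds $XR$, which is possible precisely by the hypothesis $Y/X^2 \geq R$. On the non-stationary region, iterated integration by parts (Lemma \ref{lemma:integrationbyparts} applied at unit scale with effective $Y$-parameter $Y\rho$) yields savings of $(X/(Y\rho))^A \ll R^{-A}$, giving a contribution $\ll ZR^{-A}$. Inside the window, I would substitute $v = (s - s_0)\sqrt{|\widetilde{\phi}''(s_0)|/2}$, converting the piece into a Gaussian $\int e^{\pm iv^2} V(v;\cdot)\, dv$ where $V$ is obtained by absorbing the Taylor remainder $\widetilde{\phi}(s) - \widetilde{\phi}(s_0) - \tfrac{1}{2}\widetilde{\phi}''(s_0)(s-s_0)^2$ into the amplitude as an $R^\varepsilon$-modified inert function supported on $|v| \ll \sqrt{R}\, R^\varepsilon$. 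Applying the asymptotic expansion for Gaussian integrals then yields the main term $(Z/\sqrt{Y}) e^{i\widetilde\phi(s_0;\cdot)} W(t_2, \ldots, t_d)$ up to errors of order $(X^2/Y)^A \ll R^{-A}$, where the successive correction terms are built from derivatives of $\widetilde{w}$ and $\widetilde{\phi}$ at $s_0$.

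The main obstacle, and the step demanding the most bookkeeping, is verifying that $W(t_2, \ldots, t_d)$ is itself $X$-inert. For this I would apply the implicit function theorem to the defining relation $\partial_{t_1}\phi(t_0, t_2, \ldots, t_d) = 0$: differentiating once yields $\partial_{t_k} t_0 = -\partial^2_{t_1 t_k}\phi/\partial^2_{t_1 t_1}\phi$, which is of size $Z/X_k$ by the assumed bounds on $\phi$, and an induction over mixed derivatives shows that $t_0(t_2, \ldots, t_d)$ is itself of $X$-inert type at scale $(Z; X_2, \ldots, X_d)$. The chain rule then propagates this inertness through $\widetilde{w}(s_0; \cdot)$, through $\widetilde{\phi}''(s_0; \cdot)$ in the Gaussian prefactor, and through each coefficient of the stationary-phase asymptotic expansion, establishing $X$-inertness of $W$. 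This mechanical propagation is essentially the content of the argument in \cite[Sec. 3]{KPY}, whose treatment my proof would follow directly.
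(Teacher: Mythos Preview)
Your proposal is correct and follows precisely the approach the paper indicates: the paper does not give an independent proof of this proposition but simply defers to \cite{BKY} and \cite[Sec.~2--3]{KPY}, and your sketch is exactly the parametric stationary-phase argument carried out there (rescaling to unit length, localizing to a window around the stationary point, Gaussian expansion, and implicit-function bookkeeping to verify $X$-inertness of $W$). There is nothing to add.
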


\begin{prop}\label{JBesAs}
    For $x \gg 1$, we have
    \begin{align}
        J_{k-1}\left( 2\pi x\right) \ = \ W_{k}(x) e(x)  \ + \ \overline{W_{k}}(x) e(-x)
    \end{align}
    for some function $W_k$ satisfying $x^{j}(\partial^{j}W_{k})(x) \ll_{j,k} 1/\sqrt{x}$. 
\end{prop}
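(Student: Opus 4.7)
The plan is to apply the stationary phase method of Proposition \ref{prop:statphase} to the classical integral representation
\begin{equation*}
J_{k-1}(2\pi x) \, = \, \frac{1}{2\pi} \int_{-\pi}^{\pi} e^{i(2\pi x \sin\theta - (k-1)\theta)} \, d\theta,
\end{equation*}
valid when $k \geq 1$ is an integer (as is the case for weights of holomorphic cusp forms). Write $\phi(\theta) = 2\pi x \sin\theta - (k-1)\theta$, whose derivative $\phi'(\theta) = 2\pi x \cos\theta - (k-1)$ vanishes at $\theta_\pm = \pm \pi/2 + O(1/x)$ for $x \gg 1$. First, I would introduce a smooth partition of unity on $[-\pi,\pi]$ isolating small neighborhoods of $\theta_\pm$ from a complementary region on which $|\phi'(\theta)| \gg x$. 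On the complementary region, Lemma \ref{lemma:integrationbyparts} applied with $Y = x$, $Z = 1$, $R = x$ shows that the contribution is $O(x^{-A})$ for any $A > 0$, which is more than enough.

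Next, on each neighborhood of $\theta_\pm$ one has $\phi''(\theta) \asymp x$ and $\phi^{(j)}(\theta) \ll x$ for all $j \geq 2$, so I would apply Proposition \ref{prop:statphase} with $Y = x$, $Z = 1$, $R = x$. Each stationary point yields a main term of the shape $\frac{1}{\sqrt{x}} V_\pm(x) \, e(\phi(\theta_\pm)/(2\pi))$, where $V_\pm(x)$ is a $1$-inert function of $x$. A Taylor expansion at the critical points gives $\phi(\theta_+)/(2\pi) = x - (k-1)/4 + O(1/x)$, and the $O(1/x)$ correction may be absorbed into $V_+$ using $e(O(1/x)) = 1 + O(1/x)$ (which is itself $1$-inert in $x$). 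Setting $W_k(x) := x^{-1/2} e(-(k-1)/4) V_+(x)$, the Leibniz rule yields $x^j (\partial^j W_k)(x) \ll_{j,k} x^{-1/2}$, as claimed. The reality of $J_{k-1}(2\pi x)$ then forces the $\theta_-$-contribution to be the complex conjugate of the $\theta_+$-contribution, which produces the decomposition $J_{k-1}(2\pi x) = W_k(x) e(x) + \overline{W_k}(x) e(-x)$.

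The only technical issue is bookkeeping the $x$-dependence through the stationary phase expansion to confirm that $W_k$ has the right scale and inert property; this is handled directly by Proposition \ref{prop:statphase}, and no genuine obstacle arises. As an alternative, one can instead invoke the classical Hankel asymptotic expansion of $J_\nu$ for large argument, verifying term-by-term that the truncated expansion defines a function with the claimed derivative bounds and that the tail is admissible.
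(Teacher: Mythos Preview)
The paper does not supply a proof of Proposition \ref{JBesAs}; it is quoted as a classical fact about the large-argument behaviour of Bessel functions (the Hankel asymptotic), so there is no ``paper's proof'' to compare against. Your derivation via stationary phase applied to the integral representation $J_{k-1}(2\pi x)=\frac{1}{2\pi}\int_{-\pi}^{\pi} e^{i(2\pi x\sin\theta-(k-1)\theta)}\,d\theta$ is a correct and standard route, and the alternative you mention through Hankel's expansion is equally standard.

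One bookkeeping point is worth flagging. Proposition \ref{prop:statphase} (and Lemma \ref{lemma:integrationbyparts}) produce a main term plus an error $O(x^{-A})$, while Proposition \ref{JBesAs} is stated as an exact identity. Absorbing a real error $E(x)$ into $W_k$ by writing $E(x)=\tfrac12 E(x)e(-x)\cdot e(x)+\tfrac12 E(x)e(x)\cdot e(-x)$ requires not just $E(x)\ll_A x^{-A}$ but also $E^{(i)}(x)\ll_{i,A} x^{-A}$, since $x^j\partial_x^j(e(-x))\asymp x^j$ and Leibniz then forces all derivatives of $E$ to decay rapidly. This derivative control on the error does hold (it follows by differentiating under the integral sign in the non-stationary pieces and from the parameter dependence in the stationary phase expansion), but it is not literally part of the statement of Proposition \ref{prop:statphase}, so you should either cite a version with error-derivative control or note that the Hankel expansion already delivers the exact decomposition with the required bounds.
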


\subsection{Elementary bounds}
We will repeatedly use the following elementary estimates.  First, we have $|S(0,n;c)| \leq (n,c)$, and secondly, we have for $n \geq 1$ that
\begin{equation}
\label{eq:gcdsum}
    \sum_{c \leq X} (n,c) \ll X (nX)^{\varepsilon},
    \qquad \text{and so}
    \qquad
    \sum_{c \leq X} |S(0,n;c)| \ll X (nX)^{\varepsilon}.
\end{equation}

\section{The proof}\label{sect:proof}

We now present the proof of Theorem \ref{thm:mainthm}. In this section, we will focus on $f$ and $g$ being holomorphic cusp forms, for simplicity of notation. We will present the adjustments needed when $f$ is a Maass form and/or $g$ is either a Maass form or Eisenstein series in Section \ref{sect:adjustment}.  
To aid in this generalization, we will 
not use the Deligne bound and instead use only \eqref{RS}.

\subsection{Amplification}\label{sect.Amp}

Let $2\leq L\leq p^{1-\varepsilon}$ be a parameter and  $\gamma_{\nu}$ be the indicator function on primes with $L/2 < \nu \leq L$.  Define
\begin{align}
\mathcal A(L):=\mathcal A_1(L)+\mathcal A_2(L), 
\end{align}
where \begin{align} 
    \mathcal{A}_j(L):= \sum_{\nu \leq L}\gamma_{\nu} a_j(\nu)\lambda_f(\nu^j),   \qquad \text{where} \qquad a_j(\nu)=\begin{cases}
         \overline{\lambda_f(\nu)} & j=1,\\
        - \overline{\chi(\nu)} & j=2.
    \end{cases}
\end{align}
For $j=1, 2$, define
\begin{equation}
\label{eq:Ljdef}
    \mathcal L_j= \{ \nu^j : \nu \text{ prime}, \thinspace L/2 < \nu \leq L\},
\end{equation}
and alternatively write 
\begin{equation}
\mathcal{A}_j(L) = \sum_{\ell \in \mathcal{L}_j} b_j(\ell) \lambda_f(\ell),
\qquad \text{where} \qquad 
b_j(\ell) =
\begin{cases}
    \overline{\lambda_f(\ell) }, &j=1, \\
    \overline{\chi}(\sqrt{\ell}), &j =2.
\end{cases}
\end{equation}
Note that $|\mathcal{L}_1| = |\mathcal{L}_2| = L^{1+o(1)}$, and observe from \eqref{RS} and the definition of $b_j(\ell)$ that
\begin{equation}
\label{bjl2bound}
    \sum_{\ell \in \mathcal{L}_j} |b_j(\ell)|^2 \ll L p^{\varepsilon}.
\end{equation}

The amplifier $\mathcal A(L)$ will serve as an unconditional replacement for the conjectural estimate in \eqref{eq:conditionalamplifier}.  Indeed, by
\eqref{eq:HeckeRelation2} and the prime number theorem we have 
\begin{align}\label{eq:lower bound}
    \mathcal{A}(L)=&\ \mathcal{A}_1(L) + \mathcal{A}_2(L)= \sum_{\nu \leq L} \gamma_\nu (|\lambda_f(\nu)|^2 -\overline{\chi(\nu)}\lambda_f(\nu^2)) \nonumber\\
    =&\ \sum_{\nu \leq L} \gamma_\nu \overline{\chi(\nu)}(\lambda_f(\nu)^2 -\lambda_f(\nu^2))= \sum_{\nu \leq L} \gamma_\nu\gg L(pL)^{-\varepsilon}.
\end{align} 
For  a positive integer $r \geq 1$, define
\begin{equation}
S(N,r) := \sum_n \lambda_f(nr) \lambda_g(n) w_N(n).
\end{equation}
Note that $S(N,1)=S(N)$ as defined in  \eqref{eq:exp sum}. 

Now consider
an amplified sum of the form 
\begin{align}\label{SnAdef}
S(N, \mathcal{A}) = S(N, \mathcal{A}_1) + S(N, \mathcal{A}_2),
\end{align} where 
\begin{equation}
S(N, \mathcal{A}_j)
:=
\sum_{\nu } \gamma_{\nu } a_j(\nu)
S(N, \nu^j)=\sum_{\ell \in \mathcal L_j}b_j(\ell)S(N, \ell) \hspace{20pt} j=1,2. 
\end{equation}

\begin{lemma}
\label{lemma:amplifiedSum}
We have
     \begin{equation}
        S(N) = \frac{S(N, \mathcal{A})}{\mathcal{A}(L)} + O\Big( (pLN)^{\varepsilon} \frac{N}{L}  \Big).
    \end{equation}
\end{lemma}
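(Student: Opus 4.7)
The plan is to expand each $S(N,\nu^j)$ via the Hecke relations to isolate a main term proportional to $S(N)$, then sum with the amplifier weights and observe that the resulting diagonal contributions cancel exactly (modulo a trivially-small error) thanks to the identity \eqref{eq:HeckeRelation2}.

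First, Möbius-inverting \eqref{eq:HeckeRelation} yields, for every prime $\nu < p$,
\begin{align*}
\lambda_f(n\nu) &\;=\; \lambda_f(n)\lambda_f(\nu) \;-\; \chi(\nu)\,[\nu \mid n]\,\lambda_f(n/\nu), \\
\lambda_f(n\nu^2) &\;=\; \lambda_f(n)\lambda_f(\nu^2) \;-\; \chi(\nu)\lambda_f(\nu)\,[\nu \mid n]\,\lambda_f(n/\nu).
\end{align*}
Substituting into $S(N,\nu^j)$ and setting $E(\nu) := \sum_{\nu \mid n} \lambda_f(n/\nu)\,\lambda_g(n)\,w_N(n)$ gives
\begin{equation*}
S(N,\nu) \;=\; \lambda_f(\nu)\,S(N) \;-\; \chi(\nu)\,E(\nu), \qquad S(N,\nu^2) \;=\; \lambda_f(\nu^2)\,S(N) \;-\; \chi(\nu)\lambda_f(\nu)\,E(\nu).
\end{equation*}
Weighting by $a_1(\nu) = \overline{\lambda_f(\nu)}$ and $a_2(\nu) = -\overline{\chi(\nu)}$ respectively and summing over primes $L/2 < \nu \leq L$, the main terms combine to $(\mathcal{A}_1(L)+\mathcal{A}_2(L))S(N) = \mathcal{A}(L) S(N)$. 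Using \eqref{eq:HeckeRelation2} in the form $\overline{\lambda_f(\nu)}\chi(\nu) = \lambda_f(\nu)$ together with $\overline{\chi(\nu)}\chi(\nu)=1$ (valid since $\nu \neq p$), the two off-diagonal contributions become
\begin{equation*}
-\sum_\nu \gamma_\nu \lambda_f(\nu)\, E(\nu) \qquad \text{and} \qquad +\sum_\nu \gamma_\nu \lambda_f(\nu)\, E(\nu),
\end{equation*}
which cancel exactly. Hence $S(N,\mathcal{A}) = \mathcal{A}(L) S(N)$ as an identity, and \eqref{eq:lower bound} gives $\mathcal{A}(L) \gg L(pL)^{-\varepsilon} \neq 0$, so dividing yields $S(N) = S(N,\mathcal{A})/\mathcal{A}(L)$, which is even stronger than the stated bound (with effective error zero).

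Alternatively, if one does not exploit the exact cancellation, the common off-diagonal quantity $\sum_\nu \gamma_\nu \lambda_f(\nu) E(\nu)$ is easily controlled: a change of variable $n = m\nu$ in $E(\nu)$, followed by Cauchy--Schwarz and the $\ell^2$-bound \eqref{RS} for both $\lambda_f$ and $\lambda_g$, yields $|E(\nu)| \ll (N/\nu)^{1+\varepsilon}$, and a further Cauchy--Schwarz produces $\sum_\nu \gamma_\nu |\lambda_f(\nu)\, E(\nu)| \ll N^{1+\varepsilon}$; division by $\mathcal{A}(L) \gg L^{1-\varepsilon}$ then delivers the claimed error $O((pLN)^{\varepsilon} N/L)$. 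The only point requiring vigilance is that the Hecke relations above are applied with $(\nu,p) = 1$, which is automatic given $\nu \leq L \leq p^{1-\varepsilon}$; apart from this, no additional subtleties arise since the $\nu \mid n$ contribution is absorbed by the Möbius factor in \eqref{eq:HeckeRelation}.
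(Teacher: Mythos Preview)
Your argument is correct and in fact slightly sharper than the paper's. Both proofs open $S(N,\nu^j)$ via the Hecke relation \eqref{eq:HeckeRelation}, extract $\mathcal{A}_j(L)\,S(N)$ from the $d=1$ terms, and then deal with the remaining $d=\nu$ contribution. The paper simply bounds this contribution by $O(Np^{\varepsilon})$ separately for each $j\in\{1,2\}$ using Cauchy--Schwarz and \eqref{RS}, obtaining $S(N,\mathcal{A})=\mathcal{A}(L)\,S(N)+O(Np^{\varepsilon})$ and hence the stated error after dividing by $\mathcal{A}(L)$.

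You go further and observe that, because $a_1(\nu)\chi(\nu)\lambda_f(\nu^{0})=\overline{\lambda_f(\nu)}\chi(\nu)=\lambda_f(\nu)$ while $a_2(\nu)\chi(\nu)\lambda_f(\nu)=-\overline{\chi(\nu)}\chi(\nu)\lambda_f(\nu)=-\lambda_f(\nu)$, the two $d=\nu$ contributions are exactly $\mp\sum_\nu\gamma_\nu\lambda_f(\nu)E(\nu)$ and hence cancel identically, yielding the clean identity $S(N,\mathcal{A})=\mathcal{A}(L)\,S(N)$ with zero error. This is a genuine (if minor) refinement that the paper does not record; it buys nothing for the final subconvexity exponent, since the error term $O((pLN)^{\varepsilon}N/L)$ is already harmless, but it does make the amplification step an exact identity rather than an approximate one. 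Your fallback paragraph estimating the error directly is also fine and matches the paper's route.
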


\begin{proof}
Using the Hecke relation \eqref{eq:HeckeRelation}, we see that  for $j=1,2$
\begin{equation}
    S(N, \mathcal A_j)
= \sum_{\nu } \gamma_{\nu} a_j(\nu)
\sum_{d| \nu^j}
\lambda_f(\nu^j/d) \mu(d) \chi(d)
\sum_{n \equiv 0 \shortmod{d}} 
\lambda_f(n/d) 
\lambda_g(n) w_N(n).
\end{equation}
The contribution from the sub-sum with $d=1$ is given by 
$S(N) \cdot \mathcal{A}_j(L)$.  The sub-sum with $d= \nu$ gives
\begin{equation}
    -\sum_{\nu} \gamma_{\nu} a_j(\nu) \lambda_f(\nu^{j-1})
    \chi(\nu)
    \sum_n \lambda_f(n) \lambda_g(n \nu) w_{N}(\nu n).
\end{equation}
Using \eqref{RS}, we see that for $j=1,2$ the previous expression is bounded by:
\begin{equation}
    \sum_{\substack{n, \nu\\ \nu n\asymp N}} |\lambda_f(\nu) \lambda_f(n)| \cdot | \lambda_g(n \nu)|
    \ll \sum_{\substack{n, \nu\\ \nu n\asymp N}} \left(|\lambda_f(\nu) \lambda_f(n)|^2 + | \lambda_g(n \nu)|^2\right) \ll N p^{\varepsilon}.
\end{equation}
Hence
\[ S(N, \mathcal A)= S(N) \mathcal A(L)+O((pLN)^{\varepsilon} N). \]
The lemma follows from the lower bound \eqref{eq:lower bound}.
\end{proof}

\subsection{Proof of Theorem \ref{thm:mainthm} assuming the key proposition}
The main technical result of this paper is the following.
\begin{prop}
\label{thm:SNAbound}
    Let $\sigma = 1/20$. Suppose that $L \ll N^{1/10}$ and $N \ll p^{1+\varepsilon}$.  Then for $j=1,2$
   \begin{equation}
   \label{eq:SNboundwithj}
       S(N, \mathcal{A}_j) \ll p^{\varepsilon}\Big( \sqrt{N L p} +N^{1-7\sigma/4}p^{3\sigma/2}L^{3j/2+1-9j\sigma/4}\Big).
   \end{equation} 
\end{prop}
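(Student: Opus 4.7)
The plan is to follow the road map from Section \ref{section:sketch} while carefully tracking dependence on $j \in \{1,2\}$. Write $S(N, \mathcal{A}_j) = \sum_{\ell \in \mathcal{L}_j} b_j(\ell) S(N, \ell)$, where $\ell \asymp L^j$. For each $\ell$, expand $S(N, \ell) = \sum_{m, n} \lambda_f(m) \lambda_g(n) w_N(n) \delta(m - n\ell=0)$ with $m \asymp N L^j$, and apply the DFI delta symbol (Lemma \ref{lemma:DeltaSymbol}) with $C \asymp \sqrt{N L^j}$, introducing an auxiliary modulus $c \ll C$ together with the exponential factor $e_c(h(m - n\ell))$.

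Next, apply Voronoi summation (Lemma \ref{prop:voronoi}) separately to the $m$- and $n$-sums. Since $f$ has level $p$, the $m$-Voronoi produces a factor $\chi(-c)\eta_f/(c\sqrt{p})$ and dual variables of size $\ll p$ when $c \asymp C$; the level-one $n$-Voronoi yields dual variables $\ll L^j$. The $h$-average collapses to a Ramanujan sum $S(0,\, pn - \ell m;\, c)$ as in \eqref{spsum}. Before treating the main term, I isolate and trivially bound: (i) the contribution from $(c, \ell) > 1$; (ii) the terms with $p \mid m$ (which contain the diagonal $pn = \ell m$); and (iii) the tails outside the generic ranges of the Voronoi variables. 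Each is absorbed in $p^{\varepsilon}\sqrt{NLp}$ after Cauchy--Schwarz and using \eqref{bjl2bound} in the $\ell$-sum.

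For the main term, move the $m$-sum outside and apply Cauchy--Schwarz over $m$; the squared inner sum opens to an $(\ell_1, \ell_2, n_1, n_2, c_1, c_2)$-sum weighted by a product of Ramanujan sums. Separate the diagonal $\ell_2 n_1 = \ell_1 n_2$ (bounded trivially) from the off-diagonal piece. Complete the $m$-sum to all of $\mathbb{Z}$ at negligible cost, then apply Poisson summation modulo $c_1 c_2$. The dual frequency $m$ has size $|m| \ll L^{j + \varepsilon}$, and the character sum over $h_1, h_2$ subject to $c_2 h_1 \ell_1 + c_1 h_2 \ell_2 \equiv m \pmod{c_1 c_2}$ can be evaluated in closed form. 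The zero-frequency contribution $m = 0$ collapses to the congruence $\ell_2 n_1 \equiv \ell_1 n_2 \pmod{c_1}$, which one handles via the divisor bound on $\ell_2 n_1 - \ell_1 n_2$, producing a contribution absorbed in $p^\varepsilon \sqrt{NLp}$.

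For the nonzero frequencies, the closed-form evaluation yields a phase $e_{c_1}(\overline{c_2\ell_1}\, m n_1 p)\, e_{c_2}(\overline{c_1\ell_2}\, m n_2 p)$. Applying additive reciprocity in the first factor produces a complementary phase $e_{c_1 c_2 \ell_1}(m n_1 p)$, which has $O(1)$-sized argument and is absorbed into an inert weight; what remains is the Kloosterman fraction $e_{c_2 \ell_1}\big(\overline{c_1 \ell_2}\, m p (n_2 \ell_1 - n_1 \ell_2)\big)$. Setting $A := c_1 \ell_2$, $B := c_2 \ell_1$, both of size $L^{3j/2} \sqrt{N}$ and subject to $\ell_2 \mid A$, $\ell_1 \mid B$, I invoke Theorem \ref{thm:BC} on the trilinear sum over $(m, A, B)$, taking $K = L$, $M = N = L^{3j/2}\sqrt{N}$, and $|a| = p|n_2\ell_1 - n_1\ell_2| \ll p L^{j+1}$; the hypothesis $L \ll N^{1/10}$ together with $N \ll p^{1+\varepsilon}$ ensures that $(1 + |a|K/(MN))^{1/2}$ is $O(p^\varepsilon)$ and that the $(KMN)^{7/20}(M+N)^{1/4}$ branch dominates, which is the source of $\sigma = 1/20$. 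Summing back over $\ell_i, n_i$ using \eqref{bjl2bound} and \eqref{RS}, and taking the square root from the Cauchy--Schwarz, yields the claimed second term of \eqref{eq:SNboundwithj}. The main obstacle is bookkeeping: tracking the $L, N, p$ exponents through every reduction (with different sizes for $j = 1$ vs.\ $j = 2$), verifying that all degenerate and auxiliary contributions fit into $p^{\varepsilon}\sqrt{NLp}$, and correctly handling the character-sum evaluation together with the co-primality conditions $(c_1, c_2) = 1$, $(c_i, \ell_i) = 1$ that are elided in the sketch.
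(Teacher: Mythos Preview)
Your outline is the paper's proof: delta symbol with $C=\sqrt{NL^j}$, dual Voronoi, Cauchy--Schwarz in $m$, Poisson in $m$ modulo $c_1c_2$, and Theorem~\ref{thm:BC} on the resulting trilinear form. The auxiliary pieces you isolate (terms with $(c,\ell)>1$, with $p\mid m$, the diagonal $\ell_2n_1=\ell_1n_2$, the zero Poisson frequency) are exactly the ones the paper treats, and your claim that each is absorbed into $p^\varepsilon\sqrt{NLp}$ under $L\ll N^{1/10}$ is correct.

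There is one genuine gap, however, in what you dismiss as ``the co-primality conditions $(c_1,c_2)=1$ \dots\ elided in the sketch''. No such condition exists: after Cauchy--Schwarz the moduli $c_1,c_2$ range independently over all integers $\asymp Q$. Your closed-form phase $e_{c_1}(\overline{c_2\ell_1}\,mn_1p)\,e_{c_2}(\overline{c_1\ell_2}\,mn_2p)$ is valid only when $(c_1,c_2)=1$, and the complementary terms are not negligible. The paper writes $c_i=c_{i0}c_i'$ with $c_{10},c_{20}$ sharing all prime factors and $(c_1'c_2',c_{10}c_{20})=1$; the phase is clean modulo $c_1'c_2'$ (Lemmas~\ref{lemma:ShatEvaluation}--\ref{lemma:ShatEvaluationSimplified2}), but there remains a factor $\alpha_0(m)$ modulo $c_{10}c_{20}$ that depends on the product $c_1'c_2'$. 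This must be separated by expanding into Dirichlet characters $\psi\pmod{c_{10}c_{20}}$, with a nontrivial $L^1$ bound on $\sum_\psi|\widehat{\alpha}(\psi,\cdot)|$ proved prime by prime (Lemma~\ref{lemma:alphaHatFourierTransformL1bound}). Only after this separation can one set $q_1=c_1'\ell_2$, $q_2=c_{10}c_{20}\ell_1c_2'$ and legitimately invoke Theorem~\ref{thm:BC}; with your naive $A=c_1\ell_2$, $B=c_2\ell_1$ the coefficients do not depend on $A$ alone and $B$ alone, so the theorem does not apply. A further character decomposition modulo $\ell_2$ is needed for the diagonal $\ell_1=\ell_2$ (Lemmas~\ref{lemma:exponentialmodulusell2SeparationofVariables} and~\ref{lemma:Tneq0diag}). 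This separation-of-variables step is the most technical part of the proof, not bookkeeping. (Minor slips: you write $K=L$ after correctly noting $|m|\ll L^{j+\varepsilon}$, and $|a|\ll pL^{j+1}$ should be $|a|\ll pL^{2j}$ for $j=2$; neither affects the outcome since the support condition on $D$ forces $|a|K/(MN)\ll p^\varepsilon$ anyway.)
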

Now we deduce  Theorem \ref{thm:mainthm} from Proposition \ref{thm:SNAbound} and Lemma \ref{lemma:amplifiedSum}. 
Indeed, together they imply
\begin{equation}
S(N) \ll  \frac{p^{\varepsilon}}{L} 
\Big( \sqrt{N L p} + N^{1-7\sigma/4}p^{3\sigma/2}L^{3+1-9\sigma/2}\Big).
\end{equation}
We choose $L=N^{-\frac{2 - 7\sigma}{14-18\sigma}} p^{\frac{1-3\sigma}{7-9\sigma}}$
when $N \geq p^{2/3}$ which ensures that $L \ll N^{1/10}$,
which gives
\begin{align}
S(N)\ll p^\varepsilon
N^{\frac12 + \frac{2 - 7 \sigma}{28 - 36 \sigma}} p^{\frac{3(1 - \sigma)}{7 - 9 \sigma}}.
\end{align}
Together with the trivial bound $S(N)\ll p^\varepsilon N \ll p^{1/3 + \varepsilon} N^{1/2}$ for $N \leq p^{2/3}$ (via \eqref{RS}),
it follows that 
  \begin{align}
  S(N)\ll p^{\varepsilon} \sqrt{N}(  N^{\frac{2-7
  		\sigma}{28-36\sigma}}p^{\frac{3(1-\sigma)}{7-9\sigma}+\varepsilon}+ p^{1/3}).
  \end{align}
Hence by \eqref{eq:AFEdyadicVersion} we deduce with $\sigma=1/20$ that
\begin{equation}\label{final bound}
    L(1/2, f \otimes g) \ll p^{1/2-1/524+\varepsilon}.
\end{equation}

If we assume $\mathcal{A}_1(L) \gg L p^{-\varepsilon}$, then we can get a better bound by taking $j=1$ in \eqref{eq:SNboundwithj}.
The choice of $L=N^{-\frac{2-7\sigma}{8-9\sigma}}p^{\frac{2-6\sigma}{8-9\sigma}}$ with $\sigma=1/20$ leads to
\begin{equation}
    L(1/2, f \otimes g) \ll p^{1/2-1/302+\varepsilon},
\end{equation}
as claimed in Remark \ref{remark:conditionalamplifier}.
The rest of the paper is devoted to proving Proposition \ref{thm:SNAbound}.

\subsection{Applying the delta symbol}

As a standard opening move, we write
\begin{equation}
    S(N,\ell)
    = 
    \sum_{m, n} \lambda_f(m) \lambda_g(n) \delta(m=n\ell) w_N(n) w_{N \ell}(m),
\end{equation}
where $w_{N \ell}$ is  another smooth and compactly-supported test function which is identically $1$ for $m=n\ell$ and $n$ in the support of $w_N$. 
Applying the delta symbol in Lemma \ref{lemma:DeltaSymbol} with 
\begin{equation}
\label{eq:Cdef}
C = \sqrt{NL^j} \asymp \sqrt{N \ell},
\end{equation}
directly gives
\begin{equation}
\label{afterdelta}
    S(N, \ell)
    = 
    \sum_{i=1}^{2} 
    \sum_{d} 
    \sum_{c} \frac{F_i(cd/C)}{cdC}
    \sumstar_{h \shortmod{c}}
\sum_m \lambda_f(m) w_{N \ell}(m)
e_c(hm) 
\sum_n \lambda_g(n) w_N(n)
e_c(-hn \ell)
      U_i\Big(\frac{m-n\ell}{cdC}\Big).
\end{equation}
Write $S(N, \ell) = \sum_{i=1}^{2} S_i(N, \ell)$.  Both terms will be estimated in the same way, and for simplicity of notation, we redefine $S(N, \ell)$ as either one of $S_i(N, \ell)$.
We further write 
\begin{equation}\label{SS0S'}
    S(N, \ell) = S_0(N, \ell) + S'(N, \ell),
\end{equation} where $S_0(N,\ell)$ denotes the contribution of the  terms with $(\ell, c) \neq 1$ and $S'(N,\ell)$ denotes the remaining contributions from the terms with  $(\ell, c) = 1$.

\subsection{Voronoi and stationary calculus}
\label{section:FunctionalEquations}
The next step is to apply the Voronoi formula to $S'(N, \ell)$.
\begin{lemma}
\label{lemma:S'NellPostVoronoi}
For  $C<p^{1-\varepsilon}$,  we have
 \begin{equation}
        S'(N, \ell) = \frac{1}{C \sqrt{p}}
        \sum_{\substack{M', N', Q \\ \text{dyadic}}}
    \sum_{d} 
    \sum_{\substack{(c, \ell) = 1 \\ c \asymp Q}}  \frac{\chi(c) }{c^3 d}      \, 
    \sum_{m \asymp M'} \, 
    \sum_{n \asymp N'} \, \overline{\lambda_{f}}(m)\lambda_{g}(n) 
           S(0, pn- m \ell; c)
           I(m,n,\cdot),
    \end{equation}   
    where  for some inert weight function $w(\cdot)$ we have
    \begin{equation}
    \label{eq:Imndef}
        I(m,n, \cdot) = 
        \int_{0}^{\infty} \, \int_{0}^{\infty} \ w_{N\ell}(x)w_{N}(y) U\left( \frac{x-y\ell}{cdC}\right) J_{k-1}\left(\frac{4\pi}{c}\sqrt{\frac{mx}{p}}\right)J_{k'-1}\left(\frac{4\pi\sqrt{ny}}{c}\right) 
        w(\cdot)
        \ dx\, dy.
    \end{equation}
     \end{lemma}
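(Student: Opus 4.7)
The plan is to apply the Voronoi formula termwise to the $m$- and $n$-sums in \eqref{afterdelta}, restricted to $(c,\ell)=1$. Before applying Lemma \ref{prop:voronoi}, I would verify its hypothesis $(hp,c)=1$: since $h$ runs over reduced residues mod $c$ one has $(h,c)=1$ automatically, and $(p,c)=1$ follows from $c \leq C < p^{1-\varepsilon}$ so that $c < p$. I would also need the analogous level-$1$ Voronoi formula for $g$ (proved in the same way as Lemma \ref{prop:voronoi}, but without the $\eta_{f}(p)/\sqrt{p}$ factor), which for $(a,c)=1$ takes the shape $\sum_n \lambda_g(n) e_c(an) h(n) = (2\pi i^{k'}/c) \sum_n \lambda_g(n) e_c(-\overline{a}n) \int_0^\infty h(y) J_{k'-1}(\tfrac{4\pi\sqrt{ny}}{c}) \, dy$.

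Next, I would apply the $f$-Voronoi to the $m$-sum, treating $w_{N\ell}(x) U_i(\tfrac{x-n\ell}{cdC})$ as the weight $h(x)$; this contributes the factor $(2\pi i^{k} \chi(-c)/c)(\eta_{f}(p)/\sqrt{p})$ together with the additive character $e_{c}(-\overline{hp}\,m)$ and the Bessel kernel $J_{k-1}(\tfrac{4\pi}{c}\sqrt{mx/p})$. Then I would apply the level-$1$ Voronoi to the $n$-sum (with $a = -h\ell$, using $(\ell,c)=1$ and $(h,c)=1$), which produces the factor $2\pi i^{k'}/c$, the character $e_{c}(\overline{h\ell}\,n)$, and the Bessel kernel $J_{k'-1}(\tfrac{4\pi\sqrt{ny}}{c})$. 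The two transforms combine into the double integral $I(m,n,\cdot)$ of \eqref{eq:Imndef}, where the bundle $F_{i}(cd/C) U_{i}(\cdot)$ is absorbed into a single inert weight $w(\cdot)$.

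The remaining $h$-sum is then
\begin{equation*}
\sumstar_{h \shortmod{c}} e_{c}\bigl(-\overline{hp}\,m + \overline{h\ell}\,n\bigr) \;=\; \sumstar_{t \shortmod{c}} e_{c}\bigl(t\,(-\overline{p}\,m + \overline{\ell}\,n)\bigr) \;=\; S\bigl(0,\, -\overline{p}\,m + \overline{\ell}\,n;\, c\bigr),
\end{equation*}
after substituting $t = \overline{h}$. Multiplying the argument by the unit $p\ell \pmod{c}$ (recall $(c,p\ell)=1$) does not affect the Ramanujan sum, so this equals $S(0, pn - \ell m; c)$, exactly matching the statement. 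Collecting all the prefactors, $1/(cdC)$ from the delta symbol together with $1/c$ from each of the two Voronoi transforms and $\chi(-c) \eta_{f}(p)/\sqrt{p}$ from the $f$-Voronoi, and folding constants such as $\chi(-1)$, $i^{k+k'}$, $(2\pi)^{2}$, and $\eta_{f}(p)$ into the implicit constant, yields the prefactor $\chi(c)/(c^{3}d C\sqrt{p})$.

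The conclusion follows after inserting a smooth dyadic partition of unity in the $c$-, $m$-, and $n$-variables to produce the outer sum over dyadic parameters $Q, M', N'$. No single step is conceptually difficult; the main care is arithmetic bookkeeping, in particular: (i) ensuring $(c,p)=1$ is a consequence of $C < p^{1-\varepsilon}$, so that both Voronoi formulas apply without additional correction terms; and (ii) correctly handling the modular inverses when recombining the two transformed additive characters into a single Ramanujan sum whose argument is the natural quantity $pn - \ell m$.
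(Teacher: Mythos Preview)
Your proposal is correct and follows essentially the same route as the paper's proof: verify $(c,p)=1$ from $C<p^{1-\varepsilon}$, apply Voronoi to both the $m$- and $n$-sums, evaluate the resulting $h$-sum as a Ramanujan sum, and then dyadically decompose in $c,m,n$. The only cosmetic difference is that the paper absorbs the unimodular constants $\chi(-1)$, $\eta_f(p)$, $4\pi^2 i^{k+k'}$ directly into the inert weight $w(\cdot)$ rather than speaking of an ``implicit constant'' (the lemma is stated as an equality, so strictly speaking there is no implicit constant); your intent is clear, but it is cleaner to say these are folded into $w(\cdot)$.
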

\begin{proof}
Since $C<p^{1-\varepsilon}$, we have $(c,p)=1$.  By assumption, we also have $(c, \ell) = 1$.  Hence applying the Voronoi summation in Lemma \ref{prop:voronoi} to the $m$ and $n$-sums in $S'(N, \ell)$, we obtain
\begin{multline}
\label{afteVor2}
    S'(N, \ell) =  (2\pi i^{k}) (2\pi i^{k'})\frac{\eta_{f}(p)}{\sqrt{p}}   
    \sum_{d} 
    \sum_{(c, \ell) =1}  \frac{F(cd/C)}{cdC}    \frac{ \chi(-c)}{c^2}   \sum_{m}  \sum_{n}
    \overline{\lambda_{f}}(m)\lambda_{g}(n) S(0, \bar{\ell}n-\bar{p}m; c) 
    \\
      \times \int_{0}^{\infty} \int_{0}^{\infty} \ w_{N\ell}(x)w_{N}(y) U\left( \frac{x-y\ell}{cdC}\right) J_{k-1}\left(\frac{4\pi}{c}\sqrt{\frac{mx}{p}}\right)J_{k'-1}\left(\frac{4\pi\sqrt{ny}}{c}\right) \ dx\, dy,   
\end{multline}
where the $h$-sum has been evaluated as a Ramanujan sum. We apply  dyadic partitions of unity to the sums over $m$, $n$ and $c$: $m \asymp M'$, $n \asymp N'$ and $c \asymp Q$, say. Next we absorb $4\pi^2 i^{k+k'}$, $\eta_f(p)$, $\chi(-1)$, and $F(cd/C)$ into an inert weight function $w(\cdot)$, and define $I(m,n,\cdot)$ as in \eqref{eq:Imndef} to complete the proof. 
\end{proof}
In the following, we record the asymptotic behaviour of the integral $I(m,n,\cdot)$.
\begin{lemma}[Non-oscillatory range]
\label{lemma:ImnNonOscillatory}
    Suppose that
    \begin{equation}
    \label{eq:N'phase}
 \frac{\sqrt{N' N}}{Q} \ll p^{\varepsilon}.
    \end{equation}
    Then $I(m,n,\cdot)$ is very small unless
    \begin{equation}
        \label{eq:M'phase}
        \frac{\sqrt{M' N \ell}}{Q \sqrt{p}} \ll p^{\varepsilon}.
    \end{equation}
  Assuming \eqref{eq:N'phase} and \eqref{eq:M'phase} hold, and
  with $w(\cdot)$ being a $p^{\varepsilon}$-inert function, we have
  \begin{equation}
  \label{eq:ImnNON}
      I(m,n,\cdot) = QdC N w(\cdot).
  \end{equation}
\end{lemma}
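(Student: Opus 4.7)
The plan is to split the analysis of $I(m,n,\cdot)$ into two cases according to whether the argument $z_1 := \tfrac{2}{c}\sqrt{mx/p}$ of $J_{k-1}$ is small or large. The other Bessel factor $J_{k'-1}(4\pi\sqrt{ny}/c)$ has argument $\asymp \sqrt{N'N}/Q \ll p^{\varepsilon}$ by hypothesis \eqref{eq:N'phase}, so since $J_{k'-1}$ is real-analytic in its argument (Taylor expansion around any bounded value), $J_{k'-1}(4\pi\sqrt{ny}/c)$ defines a $p^{\varepsilon}$-inert function of $y$ on the scale $N$; I absorb it into the suppressed inert weight $w(\cdot)$.

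Case 1: $\sqrt{M'N\ell/p}/Q \gg p^{\varepsilon}$, i.e., \eqref{eq:M'phase} fails. Then $z_1 \gg p^{\varepsilon} \gg 1$, and Proposition \ref{JBesAs} gives $J_{k-1}(2\pi z_1) = W_k(z_1)\, e(z_1) + \overline{W_k(z_1)}\, e(-z_1)$. This exposes an oscillatory factor $e(\pm z_1(x))$ in $x$ with phase $\phi(x) = \pm\tfrac{2}{c}\sqrt{mx/p}$; a direct computation gives $\phi^{(j)}(x) \ll Y/x^j$ with $Y = z_1 \asymp \sqrt{M'N\ell/p}/Q$ on the support $x \asymp N\ell$. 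The remaining weight in $x$ — which combines $w_{N\ell}(x)$, the factor $U((x - y\ell)/(cdC))$ (inert on scale $cdC \asymp C^2 = N\ell$), and the amplitude $W_k(z_1(x))$ (inert by the decay $z^j W_k^{(j)}(z) \ll z^{-1/2}$) — is $p^{\varepsilon}$-inert in $x$ on the scale $N\ell$. Applying Lemma \ref{lemma:integrationbyparts} (in $x$) with $R \asymp Y/p^{\varepsilon}$ a positive power of $p$ shows that the inner $x$-integral is very small, whence so is $I(m,n,\cdot)$.

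Case 2: Both \eqref{eq:N'phase} and \eqref{eq:M'phase} hold. Then $z_1 \ll p^{\varepsilon}$, so the same Taylor expansion argument gives that $J_{k-1}$ is a $p^{\varepsilon}$-inert function of $x$ on the scale $N\ell$, and I absorb it into $w(\cdot)$. To extract the size, I change variables $u := (x - y\ell)/(cdC)$, yielding $x = y\ell + u\cdot cdC$ and $dx = cdC\, du$. The support of $U$ restricts $u$ to a bounded interval, and chain-rule checks confirm that the composed integrand is $p^{\varepsilon}$-inert as a function of $(u,y)$ on the scales $(1, N)$: differentiating $w_{N\ell}(y\ell + u\cdot cdC)$ in $u$ brings down $cdC/(N\ell) \asymp 1$, and differentiating in $y$ brings down $\ell/(N\ell) = 1/N$. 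The $u$-integration contributes $O(1)$ and the $y$-integration (over $y \asymp N$) contributes $N$, so $I(m,n,\cdot) = cdC \cdot N \cdot w(\cdot) = QdCN \cdot w(\cdot)$ (absorbing $c/Q \asymp 1$ into $w(\cdot)$), as claimed.

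The principal technical obstacle is not any single deep estimate but the bookkeeping of inertness through the change of variables and Bessel factorization — in particular, verifying that all of $W_k(z_1(x))$, $J_{k'-1}(4\pi\sqrt{ny}/c)$, and the transformed bump $w_{N\ell}(y\ell + u\cdot cdC)$ retain the derivative bounds required by Lemma \ref{lemma:integrationbyparts} (in Case 1) and by the definition of inertness (in Case 2). Once this is set up, the conclusion follows from one application of integration by parts in the oscillatory regime and a direct size count in the non-oscillatory regime.
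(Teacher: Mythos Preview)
Your Case 2 is correct and matches the paper's argument (your substitution $u = (x - y\ell)/(cdC)$ is the paper's $y = x/\ell + t$ up to a rescaling). The problem is in Case 1. You assert that $U((x - y\ell)/(cdC))$ is inert in $x$ on the scale $cdC \asymp C^2 = N\ell$, but this is false in general: one only has $cd \asymp Qd$ with $Q$ a dyadic parameter satisfying $Q \ll C/d$, so $cdC \asymp QdC$ can be anywhere between $dC$ and $C^2$. When $Qd$ is genuinely smaller than $C$, the factor $U$ restricts $x$ (for fixed $y$) to an interval of length $\asymp QdC$ and has $x$-derivatives of size $(QdC)^{-j}$, not $(N\ell)^{-j}$. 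Over that short support the phase $\pm z_1(x)$ varies only by $\asymp Y \cdot Qd/C$, so in the entire range $p^{\varepsilon} \ll Y \ll C/(Qd)$ integration by parts in $x$ yields no decay, and Lemma~\ref{lemma:integrationbyparts} does not apply with $R \asymp Y/p^{\varepsilon}$ as you claim.

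The repair is to make the same change of variables you use in Case 2 \emph{before} attempting the oscillatory estimate. After writing $x = y\ell + u\cdot cdC$, the factor $U(u)$ no longer depends on the long variable, and one checks (exactly as in your Case 2 bookkeeping) that the remaining weight is $p^{\varepsilon}$-inert in $y$ on the full scale $N$, while the phase coming from $J_{k-1}$ has size $Y$ in $y$; now Lemma~\ref{lemma:integrationbyparts} in $y$ gives the required decay. This is precisely what the paper does: it first integrates the short variable $t = y - x/\ell$ (evaluating to $\tfrac{QdC}{\ell}\,w(\cdot)$ with $w$ genuinely $p^{\varepsilon}$-inert in $x$ on the scale $N\ell$, since $U$ has disappeared), and only then integrates by parts in $x$.
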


\begin{proof}
Change  variables $y = \frac{x}{\ell} + t$ in \eqref{eq:Imndef}; the inner $t$-integral takes the form 
\begin{equation}
    \intR w_{N}\Big(\frac{x}{\ell} + t \Big) U\Big(\frac{-t \ell}{cdC}\Big)
    w(\cdot) dt,
\end{equation}
where $w(\cdot)$ is $p^{\varepsilon}$-inert (absorbing $J_{k'-1}$ into the inert function).  The integral evaluates to  $\frac{QdC}{\ell} w(\cdot)$.  The remaining $x$-integral then has a single phase of size $\frac{\sqrt{M' N \ell}}{Q \sqrt{p}}$. Hence, integration by parts shows the integral is very small unless \eqref{eq:M'phase} holds.  Assuming \eqref{eq:M'phase},  we can absorb $J_{k-1}$  into the inert weight function, and we see that the $x$-integral has size $N \ell$, resulting in the desired form of $I(m,n,\cdot)$.
\end{proof}

\begin{lemma}[Oscillatory range]
\label{lemma:ImnOscillatory}
 Let
\begin{equation}
\label{eq:Zdef}
    z := \frac{Q \sqrt{M' L^j p}}{\sqrt{N}}, \quad P:=\frac{\sqrt{M' N L^j}}{Q \sqrt{p}}.
\end{equation}
Suppose that
\begin{equation}
\label{eq:N'oscillatoryRange}
    \frac{\sqrt{N' N}}{Q} \gg p^{\varepsilon}.
\end{equation}
Then $I(m,n,\cdot)$ is very small unless 
\begin{equation}
\label{eq:M'N'samesizeOscillatory}
    P\asymp \frac{\sqrt{N' N}}{Q},
\end{equation}
and
\begin{align}
\label{truncation}
    M' \ll \frac{N L^j p^{1+\varepsilon}}{d^2C^2}=\frac{p^{1+\varepsilon}}{d^2} \quad \text{ and } \quad N' \ll \frac{NL^{2j}p^\varepsilon}{d^2C^2}=\frac{L^{j}p^\varepsilon}{d^2}.
\end{align}
In this case, we have 
\begin{equation}
\label{eq:ImnOSC}
    I(m,n, \cdot)
    = \frac{Q d CN}{P} w\Big(\frac{m \ell - np}{zp^\varepsilon}, \cdot \Big),
\end{equation}
where $w(x, \cdot)$ is $p^{\varepsilon}$-inert in the suppressed variables, has compact support on $x \ll 1$, and is ``essentially $p^{\varepsilon}$-inert" in terms of $x$, meaning
$\frac{d^j}{dx^j} w(x, \cdot) \ll (p^{\varepsilon})^j$.
\end{lemma}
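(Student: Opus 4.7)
The plan is to apply the Bessel asymptotic (Proposition \ref{JBesAs}) to expose the oscillations in $J_{k'-1}$ (and in $J_{k-1}$ when its argument is large), and then handle the resulting double oscillatory integral via integration by parts (Lemma \ref{lemma:integrationbyparts}) together with stationary phase (Proposition \ref{prop:statphase}). Since the hypothesis $\sqrt{N'N}/Q \gg p^{\varepsilon}$ places $J_{k'-1}(4\pi\sqrt{ny}/c)$ in its large-argument regime, I first expand $J_{k'-1}$ via Proposition \ref{JBesAs} into two oscillatory terms with phases $\pm 2\sqrt{ny}/c$ and inert amplitudes of order $(\sqrt{N'N}/Q)^{-1/2}$, and treat each sign separately. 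I then bifurcate according to the size of $P$.

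When $P \ll p^{-\varepsilon}$, the factor $J_{k-1}$ is non-oscillatory and is absorbed into the inert weight; the residual $y$-integral has a single monotonic phase with derivative of size $\asymp \sqrt{N'N}/(QN) \gg p^{\varepsilon}/N$, so Lemma \ref{lemma:integrationbyparts} in $y$ yields a very small $I$. This confirms that $I$ is very small unless $P \gg p^{\varepsilon}$, so that the size-resonance $P \asymp \sqrt{N'N}/Q$ must hold; from this resonance together with the support constraints imposed by $w_N$ and by $cd \asymp C$, one deduces the truncations in \eqref{truncation}.

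When $P \gg p^{\varepsilon}$, I further apply Proposition \ref{JBesAs} to $J_{k-1}$, producing an overall phase
\[
\Phi(x,y) = \varepsilon_1 \cdot 2\sqrt{mx/p}/c + \varepsilon_2 \cdot 2\sqrt{ny}/c, \qquad \varepsilon_1, \varepsilon_2 \in \{\pm 1\}.
\]
Passing to coordinates $(u, v) = (x - y\ell, y)$ (so $U$ confines $|u| \ll cdC$ and $w_N$, $w_{N\ell}$ confine $v \asymp N$, $v\ell + u \asymp N\ell$), the critical observation is
\[
\partial_v \Phi\big|_{u=0} = \frac{\varepsilon_1 \sqrt{m\ell/p} + \varepsilon_2 \sqrt{n}}{c\sqrt{v}},
\]
which vanishes only when $\varepsilon_1 = -\varepsilon_2$ and $m\ell = np$. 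For the three non-resonant sign choices, $|\partial_v \Phi|$ is bounded below by a constant times $\max(P, \sqrt{N'N}/Q)/N \gg p^{\varepsilon}/N$ throughout the support, so Lemma \ref{lemma:integrationbyparts} in $v$ yields a very small contribution. For the resonant choice (say $(+,-)$), I apply Proposition \ref{prop:statphase} in $v$, with stationary point $v_0(u) = npu/(\ell(m\ell-np))$. Requiring $v_0(u) \in [N/2, 2N]$ for some $u \in [-cdC, cdC]$ forces $|m\ell - np| \ll np$, which, together with $N' \asymp M'L^j/p$ at resonance, yields the truncations $M' \ll p^{1+\varepsilon}/d^2$ and $N' \ll L^j p^{\varepsilon}/d^2$. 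Combining the stationary-phase factor $1/\sqrt{|\partial_v^2 \Phi|} \asymp Q^{1/2} N^{3/4}/(N')^{1/4}$, the Bessel amplitudes $|W_k W_{k'}| \asymp P^{-1}$ (at resonance), the effective $u$-integration, and the remaining inert pieces assembles the claimed scale $QdCN/P$.

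Finally, using the identity $\varepsilon_1 m\sqrt{\ell/p} - \varepsilon_1 n\sqrt{p/\ell} = (m\ell-np)/\sqrt{p\ell}$, the stationary-phase phase simplifies to
\[
\Phi_0(u) = \Phi\bigl(u + v_0(u)\ell,\, v_0(u)\bigr) = \frac{2}{c}\sqrt{\frac{u(m\ell-np)}{p\ell}},
\]
which at the relevant $u \asymp N\ell |m\ell-np|/(np)$ attains size $\asymp |m\ell-np|\sqrt{N}/(Qp\sqrt{N'}) = |m\ell-np|/z$ (using $z = Qp\sqrt{N'/N}$ at resonance). Hence $e^{i\Phi_0(u)}$ is an essentially $p^{\varepsilon}$-inert function of $(m\ell-np)/(zp^{\varepsilon})$ with compact support in this variable, furnishing the claimed weight $w\bigl((m\ell-np)/(zp^{\varepsilon}), \cdot\bigr)$. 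The main obstacle is the careful bookkeeping needed to verify that the stationary-phase output genuinely assembles into the asserted product $\tfrac{QdCN}{P}\, w\bigl((m\ell-np)/(zp^{\varepsilon}), \cdot\bigr)$, with all $(m,n)$-dependence correctly factoring through the single combination $(m\ell-np)/(zp^{\varepsilon})$ to the stated inertness precision.
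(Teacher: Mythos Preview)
Your outline is in the right spirit, but the stationary-phase route you propose in $v$ runs into a genuine obstruction that the paper sidesteps by a different device.

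\textbf{The degeneracy problem.} After your change to $(u,v)$, with the resonant signs, a short computation gives
\[
\partial_v^2 \Phi\big|_{v=v_0(u)} \;=\; \frac{\sqrt{n}}{2c\sqrt{v_0}}\cdot\frac{u}{v_0(v_0\ell+u)}\;\asymp\; \frac{P}{N^2}\cdot\frac{|u|}{N\ell}.
\]
Thus the second derivative at the critical point is \emph{proportional to $u$}. In the regime you ultimately care about, namely $|m\ell-np|\ll zp^{\varepsilon}$, the relevant $u$ (those for which $v_0(u)\asymp N$) satisfy $|u|\asymp U_0\ll N\ell$, so $|\partial_v^2\Phi(v_0)|\ll P/N^2$. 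Meanwhile $\partial_v\Phi\asymp P/N$ away from $v_0$. Hence there is no admissible choice of $Y$ in Proposition~\ref{prop:statphase}: the upper bound $\phi'\ll Y/Z$ forces $Y\geq P$, while the lower bound $\phi''\gg Y/Z^2$ forces $Y\ll P|u|/(N\ell)\ll P$. The critical point is essentially degenerate (Airy-type) precisely where the main contribution lives, and your bookkeeping for the scale $QdCN/P$ does not close; the $U_0$-factor you would obtain depends on $|m\ell-np|$ in a way not captured by the claimed weight.

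\textbf{The truncation argument.} Your derivation of \eqref{truncation} is also not right: the condition $v_0(u)\in[N/2,2N]$ for some $|u|\le cdC$ only yields $|m\ell-np|\ll np\cdot cdC/(N\ell)\asymp np$, which is trivially satisfied and says nothing about $M'$ or $N'$. The truncations actually come from integration by parts in the original $x,y$ (or equivalently in $u$) against the localizing factor $U((x-y\ell)/(cdC))$, as the paper does at the outset.

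\textbf{What the paper does instead.} The key step you are missing is a Taylor expansion of $\sqrt{m(y\ell+u)/p}/c$ about $u=0$: the constant term is $\sqrt{my\ell/p}/c$, the linear term is $\tfrac{u}{2y\ell}\sqrt{my\ell/p}/c$, and the quadratic remainder is $O(p^{\varepsilon})$ once the truncation is in place. This decouples the two integrals completely: the $u$-integral becomes a Fourier transform of $U(u/(cdC))$ against a linear phase (yielding $QdC$ and re-enforcing \eqref{truncation}), and the remaining $y$-integral, after the substitution $y\to y^2/4$, has the \emph{linear} phase $y(\sqrt{mN\ell}/(c\sqrt{p})-\sqrt{nN}/c)$, so it is again a Fourier transform producing $\widehat{w}\big((\sqrt{m\ell}-\sqrt{np})/(c\sqrt{p}/\sqrt{N})\big)$, which one rewrites as $w((m\ell-np)/(zp^{\varepsilon}))$. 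No non-degenerate stationary phase in $v$ is ever invoked, and the $(m,n)$-dependence is manifestly through the single combination $m\ell-np$.
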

\begin{proof}
First, we note that integration by parts shows $I(m,n,\cdot)$ is very small unless \eqref{truncation} holds.
Next,
changing variables $x = y \ell + u$ gives
\begin{align}
I(m,n,\cdot) = 
    \int_{0}^{\infty} \, \int_{-\infty}^\infty \ w_{N\ell}(y\ell+u)w_{N}(y) U\left( \frac{u}{cdC}\right) 
    J_{k-1}\left(\frac{4\pi}{c}\sqrt{\frac{m(y\ell+u)}{p}}\right)
    J_{k'-1}\left(\frac{4\pi\sqrt{ny}}{c}\right) \ du\, dy.  
\end{align}
By assumption \eqref{eq:N'oscillatoryRange}, $J_{k'-1}$ is oscillatory on the region of integration.  This then forces the other Bessel function $J_{k-1}$ to have some oscillation as well, as otherwise the entire integral is very small.  
This shows that $I(m,n,\cdot)$ is very small unless \eqref{eq:M'N'samesizeOscillatory} holds.

By Taylor expansions, we have
\begin{equation}
    \frac{\sqrt{m (y \ell + u) }}{c \sqrt{p}}
    = \frac{\sqrt{m y \ell }}{c \sqrt{p}} \Big(1 + \frac{u}{2y \ell} + O\Big(\frac{u^2}{y^2 \ell^2}\Big)\Big).
\end{equation}
Using \eqref{truncation} and $cdC \ll C^2 \asymp N \ell$, the size of the  quadratic term above is
\begin{equation}
\ll \frac{\sqrt{m y \ell }}{c \sqrt{p}}  \frac{u^2}{y^2 \ell^2}
\ll \frac{N \ell p^{\varepsilon}}{cdC } \frac{(cdC)^2}{N^2 \ell^2} = p^{\varepsilon} \frac{cdC}{N \ell} \ll p^{\varepsilon}.
\end{equation}
Hence, by Proposition \ref{JBesAs},
we have
\begin{equation}
    J_{k-1}\left(\frac{4\pi}{c}\sqrt{\frac{m(y\ell+u)}{p}}\right)
    =  P^{-1/2}
    e\left(\frac{2 \sqrt{m y \ell}}{c \sqrt{p}}+\frac{ \sqrt{m}  u}{   c \sqrt{y\ell p}} \right) w(\cdot)+ P^{-1/2}
    e\left(-\frac{2 \sqrt{m y \ell}}{c \sqrt{p}}-\frac{ \sqrt{m}  u}{   c \sqrt{y\ell p}} \right) \overline{w}(\cdot).
\end{equation}

In total, $I(m,n,\cdot)$ is a linear combination (over $\pm$ signs) of integrals of the form
\begin{equation}
P^{-1}
 \int_{0}^{\infty} \, \int_{-\infty}^\infty \ w_{N\ell}(y\ell+u)w_{N}(y) U\left( \frac{u}{cdC}\right) 
  e\left(\pm \frac{2 \sqrt{m y \ell}}{c \sqrt{p}}  \pm \frac{2 \sqrt{ny}}{c} \right) 
    e\left(\frac{\pm \sqrt{m}  u}{ c \sqrt{y\ell p}} \right)
    w(\cdot) du dy,
\end{equation}
where $w(\cdot)$ is $p^\varepsilon$-inert in all variables.
The $u$-integral simply evaluates as $QdC$ times a bump function, which enforces the effective support condition \eqref{truncation}.  Hence, the integral simplifies further as
\begin{equation}\label{SimplifiedIntegral}
 \frac{QdC N}{P}
 \int_{y \asymp 1}
  e\left(\pm \frac{2 \sqrt{m y N \ell}}{c \sqrt{p}}  \pm \frac{2 \sqrt{nN y}}{c} \right) 
    w(y,\cdot) dy.
\end{equation}
When the two $\pm$ signs are equal, then the $y$-integral is very small by integration by parts (recalling \eqref{eq:N'oscillatoryRange}).
When the two $\pm$ signs are opposites,  
after changing variables $y \rightarrow y^2/4$ and re-defining the inert function $w$, we get (for one of the two sign combinations)
\begin{equation}
    \int e\Big( y \Big(\frac{ \sqrt{m N \ell}}{c \sqrt{p}} - \frac{ \sqrt{nN}}{c}\Big) \Big) w(y,\cdot) dy = \widehat{w}\Big(\frac{ \sqrt{m N \ell}}{c \sqrt{p}} - \frac{ \sqrt{nN}}{c},\cdot \Big)
    = \widehat{w}\Big(\frac{\sqrt{m \ell} - \sqrt{n p}}{c \sqrt{p}/\sqrt{N}},\cdot \Big).
\end{equation}
This function may be replaced by a function of the form $w(\frac{m \ell - np}{zp^\varepsilon}, \cdot)$. This completes the proof.
    \end{proof}
    
For later use, we record some estimates that follow from Lemmas \ref{lemma:ImnNonOscillatory} and \ref{lemma:ImnOscillatory}:
\begin{equation}
\label{eq:M'N'over1plusPbound}
    \frac{M' N'}{1+P} \ll 
    \frac{Q p^{1+\varepsilon}  L^{j/2}}{d^3 \sqrt{N}},  
    \qquad M' \ll \frac{p^{1+\varepsilon}}{d^2},
    \qquad N' \ll \frac{L^j p^{\varepsilon}}{d^2},
\end{equation}
which hold uniformly in both the oscillatory and non-oscillatory ranges.

It is time to place the sum over $\ell$ back into the definition.  Let
\begin{equation}
\label{eq:SO'NA}
    S'(N, \mathcal{A}_j)
    = \sum_{\nu} \gamma_{\nu} a_j(\nu)S'(N,\nu^j)=
    \sum_{\ell\in\mathcal{L}_j} b_j(\ell) 
    S'(N, \ell).
\end{equation} The condition $C<p^{1-\varepsilon}$ is satisfied as long as $L<p^{1/2-\varepsilon}$. Combining Lemma \ref{lemma:ImnNonOscillatory} and Lemma \ref{lemma:ImnOscillatory}, together with the fact that $\frac{z}{P}=\frac{Q^2p}{N}$, we conclude that: 
\begin{lemma}\label{lemma:combined}
Let \begin{align}\label{Zdef}
    Z:=z(1+P^{-1})p^\varepsilon=
    p^\varepsilon\frac{Q \sqrt{M' L^j p}}{\sqrt{N}}\Big(1+\frac{Q \sqrt{p}}{\sqrt{M' N L^j}}\Big)
    = p^{\varepsilon} \Big(\frac{Q \sqrt{M' L^j p}}{\sqrt{N}} + \frac{Q^2 p}{N}\Big).
\end{align}
For $j=1, 2$, we have
\begin{align}
 S'(N, \mathcal{A}_j)\ll & \  p^\varepsilon\sum_d \sum_{\substack{M',N' : \,  \eqref{truncation}\\ Q\ll C/d \\ \mathrm{dyadic}}}\frac{N}{(1+P) Q^2  \sqrt{p}}  \sum_{m \asymp M'} |{\lambda_{f}}(m)|\nonumber\\
&\ \times 
    \Big| \sum_{\substack{(c, \ell) = 1 \\ c \asymp Q}}  \chi(c)        \sum_{\ell\in\mathcal{L}_j} b_j(\ell)
    \sum_{n \asymp N'} \, \lambda_{g}(n) 
           S(0, pn- m \ell; c)
 W\Big(\frac{m \ell - np}{Z }, \cdot\Big)\Big|.
\end{align}
where $W(x,\cdot)$ is a $p^\varepsilon$-inert function supported on $[-1,1]$ for the first variable and on dyadic intervals for the rest of the variables.  
\end{lemma}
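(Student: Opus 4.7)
The plan is to combine \eqref{eq:SO'NA} with the Voronoi output in Lemma \ref{lemma:S'NellPostVoronoi}, interchange the $\ell$-sum and the $(c,m,n)$-sums so that the $m$-variable sits on the outside of the absolute values (as in the statement), and then estimate the oscillatory integral $I(m,n,\cdot)$ by splitting into the non-oscillatory range \eqref{eq:N'phase} and the oscillatory range \eqref{eq:N'oscillatoryRange}. The only new work beyond applying Lemmas \ref{lemma:ImnNonOscillatory} and \ref{lemma:ImnOscillatory} is to repackage both cases using a single uniform weight function $W\bigl(\tfrac{m\ell-np}{Z},\cdot\bigr)$ and a single prefactor $\tfrac{N}{(1+P)Q^2\sqrt{p}}$.

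First I would substitute Lemma \ref{lemma:S'NellPostVoronoi} into \eqref{eq:SO'NA}, move $\sum_{\ell\in\mathcal{L}_j}b_j(\ell)$ inside past the $c,m,n$-sums (the only coupling is through the Kloosterman term and the coprimality $(c,\ell)=1$), and take absolute values in $m$. The $c$-range $c \asymp Q$ with $Q \ll C/d$ comes from the support of $F_i(cd/C)$. In the non-oscillatory range, Lemma \ref{lemma:ImnNonOscillatory} produces $I(m,n,\cdot) = QdCN\,w(\cdot)$ and confines $M'$ to \eqref{eq:M'phase}. Combined with the outer factor $\tfrac{1}{C\sqrt{p}\,c^3 d}\asymp\tfrac{1}{C\sqrt{p}\,Q^3 d}$ this produces $\tfrac{N}{Q^2\sqrt{p}}$. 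Since $P \ll p^{\varepsilon}$ here, $1+P\asymp p^{\varepsilon}$, so the prefactor is $\tfrac{N}{(1+P)Q^2\sqrt{p}}$ up to $p^{\varepsilon}$. In the oscillatory range, Lemma \ref{lemma:ImnOscillatory} gives $I(m,n,\cdot) = \tfrac{QdCN}{P}\,w\bigl(\tfrac{m\ell-np}{zp^{\varepsilon}},\cdot\bigr)$ with the truncation \eqref{truncation}, and now $P\gg p^{\varepsilon}$ so $1+P\asymp P$, yielding the same prefactor $\tfrac{N}{(1+P)Q^2\sqrt{p}}$. The truncation \eqref{truncation} applies in both cases (in the non-oscillatory case it follows from \eqref{eq:M'phase} together with $N' \ll Q^2 p^{\varepsilon}/N$).

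It remains to unify the weight functions. In the oscillatory range, $Z = p^{\varepsilon}z(1+P^{-1}) \asymp zp^{\varepsilon}$ since $P^{-1}\ll p^{-\varepsilon}$, so the weight $w\bigl(\tfrac{m\ell-np}{zp^{\varepsilon}},\cdot\bigr)$ coming from Lemma \ref{lemma:ImnOscillatory} can be rewritten as $W\bigl(\tfrac{m\ell-np}{Z},\cdot\bigr)$ with $W(x,\cdot)$ a $p^{\varepsilon}$-inert function supported on $|x|\le 1$. In the non-oscillatory range, $Z \asymp zp^{\varepsilon}/P \asymp p^{\varepsilon}Q^2p/N$. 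On the dyadic support $m\asymp M'$, $n\asymp N'$ one has $|m\ell|+|np|\ll M'L^j + N'p \ll Q^2 p^{1+\varepsilon}/N$ by \eqref{eq:M'phase} and \eqref{eq:N'phase}, which is $\ll Z$. Hence the constraint $|m\ell-np|\le Z$ is automatically satisfied on the dyadic support, and we may freely insert $W\bigl(\tfrac{m\ell-np}{Z},\cdot\bigr)$ (for a $W$ identically $1$ on the relevant set) without affecting the estimate.

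The main obstacle (which is really just bookkeeping) lies in verifying that the single definition of $Z$ in \eqref{Zdef} simultaneously (i) matches the effective localization in the oscillatory range, and (ii) is large enough that the insertion of $W$ is harmless in the non-oscillatory range. This is exactly the purpose of the factor $(1+P^{-1})$ in \eqref{Zdef}: it interpolates between $z$ (oscillatory) and $z/P\asymp Q^2p/N$ (non-oscillatory). Summing the two cases and absorbing the $p^{\varepsilon}$ losses into the overall $p^{\varepsilon}$ factor gives the claimed bound.
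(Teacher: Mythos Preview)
Your proposal is correct and follows essentially the same approach as the paper, which simply states that the lemma follows by combining Lemmas \ref{lemma:ImnNonOscillatory} and \ref{lemma:ImnOscillatory} together with the identity $z/P = Q^2 p/N$. You have filled in the details the paper omits: matching the common prefactor $\tfrac{N}{(1+P)Q^2\sqrt{p}}$ in both regimes, and verifying that the single scale $Z = z(1+P^{-1})p^{\varepsilon}$ interpolates correctly so that the weight $W\big(\tfrac{m\ell-np}{Z},\cdot\big)$ is genuine in the oscillatory range and harmless in the non-oscillatory range.
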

For later convenience, we note 
the following unified bound:
\begin{equation}
\label{eq:ZboundUnified}
    Z \ll \frac{Q p^{1+\varepsilon} L^{j/2}}{d \sqrt{N}}.
\end{equation}

\subsection{Bounding the contribution of \texorpdfstring{$S_0(N, \ell)$}{S0(N,l)}}\label{sect:BoundingS0}

Similar to the notation $S'(N,\mathcal{A}_j)$, write 
\begin{align}
    S_0(N, \mathcal{A}_j)=\sum_\nu \gamma_{\nu} a_j(\nu)S_0(N,\nu^j)= \sum_{\ell\in\mathcal{L}_j} b_j(\ell) 
    S_0(N, \ell).
\end{align}
\begin{lemma}
\label{lemma:S0NAbound}
We have 
    \begin{equation}
        S_0(N, \mathcal{A}_1)\ll p^{-2024} \quad \text{ and } \quad S_0(N, \mathcal{A}_2) \ll p^{\varepsilon} \sqrt{Np}.
    \end{equation}
\end{lemma}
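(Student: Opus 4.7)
Plan. Recall that $S_0(N, \ell)$ collects the terms in \eqref{afterdelta} with $(c, \ell) \neq 1$. Since $\ell = \nu^j$ with $\nu$ prime satisfying $L/2 < \nu \leq L$, the condition $(c, \ell) \neq 1$ is equivalent to $\nu \mid c$. Writing $c = \nu c'$, one simplifies $e_c(-hn\nu^j) = e_{c'}(-hn\nu^{j-1})$, which reduces the effective modulus of the $n$-sum from $c$ to $c' \leq C/\nu$. The plan is to apply the Voronoi formula (Lemma \ref{prop:voronoi}) to the $m$-sum at modulus $c$ at level $p$ (valid since $(c,p)=1$ because $C<p^{1-\varepsilon}$) and to the $n$-sum at modulus $c'$ at level $1$ (valid since $(h,c)=1$ implies $(h,c')=1$). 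This produces dual sums whose effective lengths are
\begin{equation*}
  m' \ll c^2 p / (N \ell) \leq p, \qquad n' \ll (c')^2 / N \leq (C/\nu)^2/N = L^j/\nu^2.
\end{equation*}

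For $j = 1$, the bound $L/\nu^2 \leq 4/L$ is strictly less than $1$ whenever $L \geq 5$, so the integrality requirement $n' \geq 1$ essentially empties the dual sum. Quantitatively, using Proposition \ref{JBesAs} to write $J_{k'-1}(4\pi\sqrt{n'y}/c')$ as an oscillating exponential times a smooth inert amplitude, the $n$-Voronoi integral $\int w_N(y)\, J_{k'-1}(4\pi\sqrt{n'y}/c')\, dy$ has phase oscillation of overall size $Y \asymp \sqrt{n'N}/c' \gg \sqrt{L}$ (using $\nu \asymp L$ and $c' \leq C/\nu$). Lemma \ref{lemma:integrationbyparts} then yields decay at rate $R^{-A}$ with $R \gg \sqrt{L}/p^{\varepsilon}$. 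Since $L$ will be chosen as a positive power of $p$, this delivers arbitrarily rapid polynomial decay, and a trivial summation over the remaining parameters produces $S_0(N, \mathcal{A}_1) \ll p^{-2024}$.

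For $j = 2$, the bound becomes $L^2/\nu^2 \leq 4$, so only $O(1)$ values of $n'$ survive and no emptiness argument is available. Instead one bounds everything trivially after the double Voronoi: the level-$p$ root number contributes $p^{-1/2}$; the combined $h$-sum, after absorbing the dual exponentials from both Voronois, is a Ramanujan-type sum of magnitude at most $c$; the twice-transformed integral is of a controlled size by standard Bessel estimates; and the dual $m'$-sum of length $\ll p/d^2$ is estimated via Cauchy--Schwarz together with the $\ell^2$-bound \eqref{RS} in place of the Deligne bound. Summing over $c \asymp Q$ with $\nu \mid c$ (yielding $\ll Q/\nu + 1$ terms), over the delta-method variables $d$ and $Q \leq C/d$, and over $\nu \in (L/2, L]$ with $|b_2(\nu)| = 1$, one obtains $S_0(N, \mathcal{A}_2) \ll p^{\varepsilon}\sqrt{Np}$. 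The principal obstacle is the combinatorial bookkeeping of the combined $h$-sum and the integral after the two Voronoi transformations (in particular handling the case $\nu^2 \mid c$ for $j=2$ by further stratification); once the dual lengths are pinned down, however, the estimation is routine.
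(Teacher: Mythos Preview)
Your proposal is correct and follows essentially the same approach as the paper: apply Voronoi to both the $m$- and $n$-sums, observe that the reduced modulus $c' = c/(c,\ell)$ shortens the dual $n$-length to $\ll L^j/\nu^2$, which is empty for $j=1$ (and likewise for $j=2$ when $\nu^2 \mid c$), and for the surviving case $j=2$, $\nu \| c$, bound everything trivially. The paper carries out the $j=2$ estimation slightly more explicitly by factoring the resulting Ramanujan sum via the Chinese remainder theorem as $S(0,pn-m;c')\,S(0,m;\nu)$ before summing, but the substance is the same as what you outline.
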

Note that Lemma \ref{lemma:S0NAbound} is more than satisfactory for Proposition \ref{thm:SNAbound}, so 
an improvement here would not affect the main theorem.
\begin{proof}
Recall 
$\mathcal{L}_j$ was defined in \eqref{eq:Ljdef}.
The condition $(\ell, c) > 1$ means $\nu \mid c$.  We write
\begin{equation}
    e\Big(\frac{-hn \ell}{c}\Big) = e\Big(\frac{- hn\ell' }{c'}\Big), \hspace{15pt} \text{where} \hspace{15pt} \ell' := \frac{\ell}{(c, \ell)}\hspace{10pt} \text{and} \hspace{10pt} c' := \frac{c}{(c, \ell)},
\end{equation}
and apply the Voronoi formula with this modified exponential. 
We can follow the argument of Lemma \ref{lemma:S'NellPostVoronoi}. The standard calculation of the dependence of the dual sum on the conductor and the length of the original sum shows that the $n$-sum may be truncated earlier (compared to the analysis in Section \ref{section:FunctionalEquations}) by a factor of $(c/c')^2$. Thus, we have
\begin{align}
        S_0(N, \mathcal{A}_j) =&  \frac{1}{C \sqrt{p}}
       \sum_{d} \sum_{\substack{M', N':  
        \eqref{truncation}\\ Q\ll C/d \\ \text{ dyadic} 
        }}
        \sum_{L/2<\nu\leq L} \gamma_{\nu} a_j(\nu)
    \\
    & \times 
    \sum_{\substack{c \equiv 0 \shortmod{\nu} \\ c \asymp Q}}  \frac{\chi(c) }{c^2 c' d}      \, 
    \sum_{m \asymp M'} \, 
    \sum_{n \asymp N'\left(\frac{c'}{c}\right)^2} \, \overline{\lambda_{f}}(m)\lambda_{g}(n) 
           S(0, \overline{\ell'} (c,\ell) n- \overline{p} m; c)
           I_0(m,n,\cdot),
    \end{align}  
where
\begin{equation}
        I_0(m,n, \cdot) = 
        \int_{0}^{\infty} \, \int_{0}^{\infty} \ w_{N\ell}(x)w_{N}(y) U\left( \frac{x-y\ell}{cdC}\right) J_{k-1}\left(\frac{4\pi}{c}\sqrt{\frac{mx}{p}}\right)J_{k'-1}\left(\frac{4\pi\sqrt{ny}}{c'}\right) 
        w(\cdot)
        \ dx\, dy.
    \end{equation}
 If $j=1$ or $c/c' = \nu^2$ then recalling \eqref{truncation},
 this means the $n$-sum is essentially empty .  Therefore, we only need to consider the case $j=2$ with $\nu || c$, so $(c', \nu) = 1$.  
 
We focus on the oscillatory case since it is a little harder.
A modification of the ideas in Lemma \ref{lemma:ImnOscillatory} shows that $I_{0}(m,n, \cdot)$ is a linear combination of expressions of the form
\begin{equation}
    \frac{Q dCN}{P'} \int_{y \asymp 1} e\Big(\pm \frac{2 \sqrt{mNy \ell}}{c \sqrt{p}} \pm \frac{2 \sqrt{nNy}}{c'}\Big) w(y, \cdot) dy,
\end{equation}
where $P'$ is the size of the phase here. Note that $P'=P$ as in \eqref{eq:Zdef}.
With $M'$ and $N'$ satisfying \eqref{eq:M'N'over1plusPbound}, we have the truncations
\begin{equation}
    m \ll M', \qquad n \ll \frac{N'}{\ell} \asymp \frac{N}{L^2}.
\end{equation}

Using that $\ell' = \nu$, $(c, \ell) = \nu$, $(c', \nu) = 1$, etc., the Ramanujan sum factors as
\begin{equation}
    S(0, \overline{\ell'} (c,\ell) n- \overline{p} m; c' \nu)
    =S(0, p n-  m; c' ) S(0,  m; \nu).
\end{equation}
 Hence the contribution to $S_0(N, \mathcal{A}_2)$ from these terms 
is bounded by
\begin{equation}
\label{eq:NotFeelingCreative}
\frac{p^\varepsilon}{C\sqrt{p}}\sum_{\nu\asymp L}\sum_{d}   \sum_{\substack{M', N': 
        \eqref{truncation}\\ Q\ll C/d \\ \text{ dyadic} 
        }}
\sum_{\substack{c \equiv 0 \shortmod{\nu} \\ c \asymp Q}}
        \frac{1}{c^2c'd} 
 \sum_{\substack{m \ll M' \\ n \ll L^{-2} N'}}    
        \frac{QCN}{P}|\lambda_f(m)\lambda_g(n)|.
\end{equation}
Using $|\lambda_f(m)\lambda_g(n)|\ll |\lambda_f(m)|^2+|\lambda_g(n)|^2$
and \eqref{RS},
we obtain that
\eqref{eq:NotFeelingCreative} is
        \begin{equation}
 \ll \frac{p^\varepsilon}{C\sqrt{p}} L \sum_{d}   \sup_{\substack{M', N': 
        \eqref{truncation}\\ Q \ll C/d
        }} \frac{Q}{L} \frac{L}{Q^3} \frac{N'}{L^2}M'\frac{QCN}{P} 
        \ll p^\varepsilon\sqrt{Np},
\end{equation}
where we used \eqref{eq:M'N'over1plusPbound} to aid in simplification.
The easier non-oscillatory range is treated in a similar fashion.
\end{proof}

 Let $j=1,2$. Combining \eqref{SS0S'} and Lemma \ref{lemma:S0NAbound}, we have \begin{align}
    S(N,\mathcal{A}_j)\ll |S'(N,\mathcal{A}_j)|+p^\varepsilon \sqrt{Np} \cdot \delta(j=2)+p^{-2024}.
\end{align}
We also write
\begin{equation}
S '(N, \mathcal{A}_j) = 
\underbrace{S^{(0)}(N, \mathcal{A}_j)}_{p|m} + 
\underbrace{S''(N, \mathcal{A}_j)}_{p \nmid m}.
\end{equation}
\begin{lemma}
\label{lemma:trivialbound}
For $j=1,2$, we have
\begin{equation}\label{S(0)Bound}
    S^{(0)}(N, \mathcal{A}_j)
     \ll p^{-1/2+\varepsilon} N L^{j+1}.
\end{equation}    
\end{lemma}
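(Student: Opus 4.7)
The plan is to estimate $S^{(0)}(N,\mathcal{A}_j)$ entirely by trivial bounds, starting from Lemma~\ref{lemma:combined} and exploiting the sparsity of multiples of $p$ in the $m$-sum. After inserting the elementary estimate $|S(0,pn-m\ell;c)| \leq c \asymp Q$, together with $|\chi(c)| \leq 1$ and the trivial bound on the inert weight $W$, the remaining sum decouples into a product of independent sums over $m$, $\ell$, $n$, and $c$. I would not attempt to exploit the support restriction $|m\ell - np| \ll Z$ or any cancellation in the Ramanujan sum; both are discarded.

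The essential saving comes from the restriction $p \mid m$. Writing $m = p m^*$ with $m^* \asymp M'/p$, the Hecke multiplicativity and $|\lambda_f(p)| \leq 1$ from \eqref{BoundAtp} give $|\lambda_f(m)| \leq |\lambda_f(m^*)|$, so Cauchy--Schwarz combined with \eqref{RS} produces
\[
\sum_{\substack{m \asymp M' \\ p \mid m}} |\lambda_f(m)| \ \leq \ \sum_{m^* \asymp M'/p} |\lambda_f(m^*)| \ \ll \ (M'/p)^{1+\varepsilon}.
\]
This is the only nontrivial mechanism at play, and it is ultimately responsible for the factor $p^{-1/2}$ in the stated bound. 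The other sums are handled directly: $\sum_{n \asymp N'}|\lambda_g(n)| \ll N'^{1+\varepsilon}$ via Cauchy--Schwarz and \eqref{RS}, $\sum_{\ell \in \mathcal{L}_j}|b_j(\ell)| \ll L$ since $|b_j(\ell)| \leq 1$ and $|\mathcal{L}_j| \ll L$, and $\sum_{c \asymp Q} 1 \ll Q$.

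Assembling these estimates inside the bound of Lemma~\ref{lemma:combined}, one arrives at
\[
|S^{(0)}(N,\mathcal{A}_j)| \ \ll \ p^{\varepsilon} \sum_d \frac{N L}{p^{3/2}} \cdot \frac{M' N'}{1+P}.
\]
I would then apply \eqref{eq:M'N'over1plusPbound} to bound $M'N'/(1+P) \ll Q p L^{j/2}/(d^3 \sqrt{N})$ and substitute $Q \ll C/d = \sqrt{N L^j}/d$ from \eqref{eq:Cdef}. A direct computation yields an overall contribution of $p^{\varepsilon} N L^{j+1}/(d^4 \sqrt{p})$, and summing over the dyadic parameter $d$ (whose range is polynomially bounded) preserves this bound. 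The result matches the statement \eqref{S(0)Bound}. The only subtlety in the whole argument is being attentive to the bound $|\lambda_f(p)| \leq 1$; no finer arithmetic input than \eqref{BoundAtp}, \eqref{RS}, and the trivial estimate on Ramanujan sums is required.
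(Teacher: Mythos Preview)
Your proposal is correct and follows essentially the same approach as the paper: start from Lemma~\ref{lemma:combined}, bound the Ramanujan sum trivially by $c$, use $|\lambda_f(pm^*)|\leq|\lambda_f(m^*)|$ together with \eqref{RS} to gain $M'/p$, and simplify via \eqref{eq:M'N'over1plusPbound} and $Q\ll\sqrt{NL^j}/d$. One small inaccuracy: you assert $|b_j(\ell)|\leq 1$, but for $j=1$ we have $b_1(\ell)=\overline{\lambda_f(\ell)}$, and the paper deliberately avoids the Deligne bound; the correct route to $\sum_{\ell\in\mathcal{L}_j}|b_j(\ell)|\ll Lp^\varepsilon$ is Cauchy--Schwarz combined with \eqref{bjl2bound}, which the paper invokes.
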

\begin{proof}
From Lemma \ref{lemma:combined}, and the trivial bounds $(pn-m\ell,c)\leq c$ and $|w|\ll 1$, we have
  \begin{align*}
     S^{(0)}(N, \mathcal{A}_j)
     & \ \ll 
      \sum_d
        \sum_{\substack{M', N': \,  
        \eqref{truncation}\\ Q\ll C/d \\ \text{ dyadic} 
        }}
\frac{N p^{\varepsilon}}{(1+P) Q^2  \sqrt{p}} \sum_{\substack{p \mid m \\ m \asymp M'}} |\lambda_f(m)|
    \sum_{\substack{(c, \ell) = 1 \\ c \asymp Q}} c \sum_{\ell \in \mathcal{L}_j} |b_j(\ell)|     
    \sum_{n \asymp N'}   | \lambda_g(n)| .
    \end{align*}
    Using \eqref{eq:HeckeRelation} under the condition for $M'$ in \eqref{eq:M'N'over1plusPbound} and \eqref{BoundAtp}, we have
     $|\lambda_f(pm')| \leq |\lambda(m')| $, which together with \eqref{RS}, \eqref{bjl2bound} and \eqref{eq:M'N'over1plusPbound} (for simplification) gives
    \begin{align*}
S^{(0)}(N, \mathcal{A}_j) & \ \ll  \sum_{d}  \sum_{\substack{M', N': 
        \eqref{truncation}\\ Q\ll C/d \\ \text{ dyadic} 
        }}\frac{N p^\varepsilon}{(1+P)Q^2 \sqrt{p}} \frac{M' }{p}  Q^2LN'
        \ll p^\varepsilon \frac{N}{\sqrt{p}} L^{j+1}. \qedhere
\end{align*}
\end{proof}

\subsection{Cauchy--Schwarz and inclusion-exclusion}
It remains to estimate $S''(N, \mathcal{A}_j)$, which satisfies
\begin{align}
    |S''(N, \mathcal A_j)| \ll & \ p^\varepsilon\sum_d \sum_{\substack{M',N' \eqref{truncation}\\ Q\ll C/d \\ \text{dyadic}}}\frac{N}{(1+P) Q^2  \sqrt{p}} \sum_{\substack{p\nmid m \\ m \asymp M'}} |\lambda_f(m)|
    \\&\times
     \Big|\sum_{\substack{(c, \ell) = 1 \\ c \asymp Q}}  \chi(c) 
    \sum_{\ell\in\mathcal{L}_j}b_j(\ell)
    \sum_{n \asymp N'} \, \lambda_{g}(n) 
           S(0, pn- m \ell; c)
 W\Big(\frac{m \ell - np}{Z }, \cdot\Big)\Big|.
\end{align}
Using the Cauchy--Schwarz inequality and  \eqref{RS} we obtain that
\begin{equation}
\label{eq:SO'NamplifiedBoundviaTO'Namplified}
   |S''(N, \mathcal{A}_j)|
\ll 
\sum_d 
\sum_{\substack{M', N': \eqref{truncation}\\  Q\ll C/d\\  \text{ dyadic}
        }}
      \frac{\sqrt{M'} N p^{\varepsilon}}{(1+P)Q^2 \sqrt{p}}    
|T '(N, \mathcal{A}_j)|^{1/2},
\end{equation}
where
\begin{equation}
T'(N, \mathcal{A}_j)
= 
   \sum_{\substack{m \asymp M' \\ p \nmid m }} 
   \Big|
   \sum_{\ell \in \mathcal{L}_j}b_j(\ell)
    \sum_{\substack{(c, \ell) = 1 \\ c \asymp Q}}  \chi(c)      \, 
    \sum_{n \asymp N'} \, \lambda_{g}(n) 
           S(0, pn- m \ell; c)
 W\Big(\frac{m \ell - np}{Z}, \cdot \Big)
 \Big|^2.
\end{equation}
Using \eqref{eq:M'phase}, \eqref{eq:Zdef}, and \eqref{eq:M'N'samesizeOscillatory}, we see that
$\frac{\sqrt{M'} N}{(1+P) Q^2 \sqrt{p}} \ll \frac{p^{\varepsilon} \sqrt{N}}{Q \sqrt{L^j}}$, whence 
\begin{align}
\label{eq:SO'NamplifiedBoundviaTO'NamplifiedSimplified}
     |S''(N, \mathcal{A}_j)|
\ll p^\varepsilon
\sum_d 
\sum_{\substack{M', N': \eqref{truncation}\\  Q\ll C/d\\  \text{ dyadic}
        }}
      \frac{ \sqrt{N}}{Q \sqrt{L^j}}    
|T '(N, \mathcal{A}_j)|^{1/2}.
\end{align}
Expanding the square in $T'(N, \mathcal A_j)$ gives 
\begin{equation}
\label{eq:Tosc'def}
  T'(N, \mathcal{A}_j)
= 
\sum_{\ell_1, \ell_2 \in \mathcal{L}_j}b_j(\ell_1)\overline{b_j(\ell_2)}
\sum_{\substack{(c_1, \ell_1) = 1 \\ (c_2, \ell_2) = 1 \\ c_1 \asymp c_2 \asymp Q}}
\chi(c_1) \overline{\chi}(c_2)
\sum_{n_1, n_2 \asymp N'}
\lambda_{g}(n_1) \lambda_{g}(n_2) \cdot V',
\end{equation}
where $V' = 
    V'(\ell_1, \ell_2, n_1, n_2, c_1, c_2)$ is defined by
\begin{equation}
\label{eq:V'def}
V'=        \sum_{\substack{m \asymp M' \\ p \nmid m}}
S(0, p n_1 - m \ell_1;c_1) S(0, pn_2 - m \ell_2;c_2)
 W\Big(\frac{m \ell_1 - n_1 p}{Z}, \cdot\Big)
  W\Big(\frac{m \ell_2 - n_2 p}{Z}, \cdot\Big).
\end{equation}
Next for $j=1,2$ we write 
\begin{equation}
    T'(N, \mathcal{A}_j) 
    =
    \underbrace{T^{(0)}(N, \mathcal{A}_j)}_{n_1 \ell_2 = n_2 \ell_1}
    + \underbrace{T''(N, \mathcal{A}_j)}_{n_1 \ell_2 \neq n_1 \ell_1}.
\end{equation}

\begin{lemma}
\label{lemma:Tosc0bound}
    For $j=1,2$, and $L \ll p^{1/2-\varepsilon}$, we have
    \begin{equation}
    \label{eq:T0boundinLemma}
        T^{(0)}(N, \mathcal{A}_j) \ll p^{\varepsilon} 
        \frac{Q^2  L^{1+j} p}{d^4}
    \end{equation}
  and its contribution to $S''(N, \mathcal{A}_j)$ via \eqref{eq:SO'NamplifiedBoundviaTO'NamplifiedSimplified} is
\begin{equation}
\label{eq:T0termboundTowardsS}
    \ll p^{\varepsilon} \sqrt{N L p}.
\end{equation}
\end{lemma}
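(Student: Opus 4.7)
The plan is to bound $T^{(0)}(N, \mathcal{A}_j)$ trivially, termwise, using $|S(0, a; c)| \leq (a, c)$ together with the gcd estimate \eqref{eq:gcdsum}, and then to substitute the resulting bound into \eqref{eq:SO'NamplifiedBoundviaTO'NamplifiedSimplified} to obtain the contribution to $S''$. The first structural observation is that the diagonal equation $n_1\ell_2 = n_2\ell_1$ with $\ell_1, \ell_2 \in \mathcal{L}_j$ (consisting of $j$-th powers of primes in $(L/2, L]$) splits cleanly into two subcases: either (i) $\ell_1 = \ell_2$ (hence $n_1 = n_2 =: n$), or (ii) $\ell_1 \neq \ell_2$, in which case $\gcd(\ell_1, \ell_2) = 1$ forces $\ell_i \mid n_i$, giving $n_1 = \ell_1 n$, $n_2 = \ell_2 n$ with $n \asymp N'/L^j$. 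Subcase (ii) requires $N' \gg L^j$; combined with $N' \ll L^j/d^2$ from \eqref{eq:M'N'over1plusPbound}, this forces $d = 1$.

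For the inner $V'$-sum I would change variables to a common shift $m'$ (say $m' = pn - m$ in (ii), and $m' = pn - m\ell$ in (i)). Using $(\ell_i, c_i) = 1$, the Ramanujan sums collapse to $S(0, m'; c_i)$, and the weight functions localize $|m'| \ll Z/L^j$. Crucially, the constraint $p \nmid m$ inherited from $S''$ ensures $m' \neq 0$ in this range. Applying $|S(0, m'; c)| \leq (m', c)$ and \eqref{eq:gcdsum} on each of $c_1$ and $c_2$ yields
\[
\sum_{c_1, c_2 \asymp Q} |V'| \ll \widetilde{M}\, Q^2 p^{\varepsilon} \ll \frac{Z\, Q^2}{L^j}\, p^{\varepsilon}.
\]
Summing against the amplifier weights, subcase (i) is immediate: Cauchy--Schwarz with \eqref{bjl2bound} and \eqref{RS} gives $\sum_\ell |b_j(\ell)|^2 \sum_n |\lambda_g(n)|^2 \ll L N' p^{\varepsilon}$, and after substituting the upper bounds on $N'$ and $Z$ from \eqref{eq:M'N'over1plusPbound} and \eqref{eq:ZboundUnified}, this matches the target $\ll Q^2 L^{1+j} p/d^4 \cdot p^{\varepsilon}$. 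Subcase (ii) (with $d = 1$) requires more care: one must exploit the Hecke relation $\lambda_g(\ell n) = \lambda_g(\ell)\lambda_g(n) - \lambda_g(n/\ell)\mathbf{1}[\ell\mid n]$ and, for $j = 1$, the parallel identity $|\lambda_f(\ell)|^2 = \overline{\chi}(\ell)\lambda_f(\ell^2) + 1$, in order to extract the requisite cancellation from the $\ell$-sum while invoking only \eqref{RS}; the ``tail'' terms with $\ell \mid n$ contribute less by the divisibility constraint and can be absorbed.

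Once \eqref{eq:T0boundinLemma} is established, \eqref{eq:T0termboundTowardsS} follows by substituting $|T^{(0)}|^{1/2} \ll Q L^{(1+j)/2} \sqrt{p}/d^2 \cdot p^{\varepsilon}$ into \eqref{eq:SO'NamplifiedBoundviaTO'NamplifiedSimplified}: the compound factor $\sqrt{N}/(Q\sqrt{L^j}) \cdot |T^{(0)}|^{1/2}$ collapses to $\sqrt{NLp}/d^2 \cdot p^{\varepsilon}$, and summing over the dyadic parameters $M', N', Q$ (logarithmic in $p$) together with $d \geq 1$ produces $\sqrt{NLp}\, p^{\varepsilon}$.

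The main obstacle is subcase (ii): a naive Cauchy--Schwarz on the double $\ell$-sum $\sum_{\ell_1 \neq \ell_2} |b_j(\ell_1) b_j(\ell_2)|$ introduces an extra factor of $L$ relative to the target, and recovering it forces a careful Hecke-multiplicative reorganization of both $\lambda_g$ and, when $j = 1$, $|\lambda_f|^2$. Throughout, this must be done without invoking the Deligne bound, in order to preserve applicability to weight-one holomorphic forms and to keep the final estimate independent of the Ramanujan exponent $\theta$.
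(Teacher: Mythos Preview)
Your framework agrees with the paper's on the essentials (bound the Ramanujan sums by gcd's, use \eqref{eq:gcdsum} on the $c_i$-sums, then control the diagonal), but you have missed the key simplification and replaced it with a problematic work-around. The paper handles the constraint $\ell_1 n_2 = \ell_2 n_1$ in a single stroke: apply $|ab| \ll |a|^2 + |b|^2$ with $a = b_j(\ell_1)\lambda_g(n_2)$ and $b = \overline{b_j(\ell_2)}\lambda_g(n_1)$; for fixed $(\ell_1, n_2)$, the number of pairs $(\ell_2, n_1)$ with $\ell_2 n_1 = \ell_1 n_2$ is $\tau(\ell_1 n_2) = O(p^{\varepsilon})$, so
\[
T^{(0)}(N,\mathcal{A}_j) \ll p^{\varepsilon} Q^2 M' \sum_{\ell_1 \in \mathcal{L}_j}\sum_{n_2 \asymp N'} |b_j(\ell_1)\lambda_g(n_2)|^2 \ll p^{\varepsilon} Q^2 M' L N',
\]
and \eqref{eq:T0boundinLemma} follows from \eqref{eq:M'N'over1plusPbound}. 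No case split on $\ell_1=\ell_2$ versus $\ell_1\neq\ell_2$, no Hecke reorganization, and no use of the $W$-localization to shorten the $m$-sum are needed; the trivial length $M'$ for the $m$-sum already suffices.

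Your subcase (ii) plan has a genuine gap. Once you bound $|S(0,m';c_i)| \leq (m',c_i)$ you have discarded the sign, so there is no ``cancellation from the $\ell$-sum'' left to extract; the extra factor of $L$ you flagged can only be recovered by a \emph{counting} argument, and the divisor bound above is exactly that. Your stated Hecke relation $\lambda_g(\ell n) = \lambda_g(\ell)\lambda_g(n) - \lambda_g(n/\ell)\mathbf{1}[\ell\mid n]$ is correct only for prime $\ell$, hence only for $j=1$; for $j=2$ the relation is more complicated and your main term would involve $\sum_{\nu}|\lambda_g(\nu^2)|$, which is not controlled by \eqref{RS} alone (and you have explicitly forbidden Deligne, needed for the Maass-$g$ case). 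Finally, the identity $|\lambda_f(\ell)|^2 = \overline{\chi}(\ell)\lambda_f(\ell^2) + 1$ is relevant to the amplifier construction in Section~\ref{sect.Amp} but has no role in bounding $T^{(0)}$.
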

\begin{proof}
    Recall $p \nmid m$ by assumption, and note $(p, \ell) =1$ since $\ell < p$.  Thus $pn_i \neq m \ell_i$, so the arguments of the Ramanujan sums in \eqref{eq:V'def} are both non-zero.  Hence we may use 
    \eqref{eq:gcdsum} to estimate the $c_i$-sums.
Therefore, using $|ab|\ll |a|^2+|b|^2$, \eqref{RS} and \eqref{bjl2bound}, we have
\begin{align}
    T^{(0)}(N, \mathcal{A}_j) 
    \ll p^{\varepsilon}
\sum_{\substack{\ell_1, \ell_2 \in \mathcal{L}_j \\ n_1, n_2 \asymp N' \nonumber\\ \ell_1 n_2 = \ell_2 n_1}}
\left(|b_j(\ell_1)\lambda_g(n_2)|^2+|b_j(\ell_2)\lambda_g(n_1)|^2\right)Q^2
M'
\ll p^{\varepsilon} L N' Q^2 M',
\end{align}
which can be simplified to  \eqref{eq:T0boundinLemma} using \eqref{eq:M'N'over1plusPbound}. 
Inserting \eqref{eq:T0boundinLemma} into \eqref{eq:SO'NamplifiedBoundviaTO'NamplifiedSimplified} shows
the contribution to $S''(N, \mathcal{A}_j)$ is bounded by
\begin{equation*}
 p^{\varepsilon} \sum_{d}
 \sup_{Q\ll C/d}
 \frac{\sqrt{N}}{Q \sqrt{L^j}} \Big(\frac{Q^2 p L^{1+j}}{d^4} \Big)^{1/2}
    \ll p^{\varepsilon}\sqrt{N L p}. \qedhere
\end{equation*}
\end{proof}

Next we write by inclusion-exclusion
\begin{equation}
    \underbrace{T''(N, \mathcal{A}_j)}_{p \nmid m} 
    =
    \underbrace{T(N, \mathcal{A}_j)}_{\text{all $m$}}
    -
    \underbrace{T^{(00)}(N, \mathcal{A}_j)}_{p \mid m}.
\end{equation}
\begin{lemma}
\label{lemma:Tosc00bound}
    For $j=1, 2$, we have
    \begin{equation}
        T^{(00)}(N, \mathcal{A}_j) \ll 
        p^{\varepsilon} 
        \frac{L^{2+2j} Q^3}{ d^{4}}, 
    \end{equation}
    and its contribution to $S''(N, \mathcal{A}_j)$ via \eqref{eq:SO'NamplifiedBoundviaTO'NamplifiedSimplified}  is bounded by
    \begin{equation}
    \label{eq:STosc00bound}
        \ll p^{\varepsilon} N^{3/4} L^{1+j}.
    \end{equation}
\end{lemma}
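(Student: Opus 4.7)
The approach is a trivial bound on $T^{(00)}(N,\mathcal{A}_j)$, exploiting the restriction $p\mid m$. Since $C<p^{1-\varepsilon}$, every modulus $c_i\asymp Q\ll C/d$ satisfies $(p,c_i)=1$. Writing $m=pm'$ with $m'\asymp M'/p$ and using $S(0,pa;c)=S(0,a;c)$ when $(p,c)=1$, the analog of $V'$ with $p\mid m$ takes the form
\begin{equation*}
V^{(00)} = \sum_{m'\asymp M'/p}S(0,n_1-m'\ell_1;c_1)\,S(0,n_2-m'\ell_2;c_2)\,W\!\left(\tfrac{p(m'\ell_1-n_1)}{Z},\cdot\right)W\!\left(\tfrac{p(m'\ell_2-n_2)}{Z},\cdot\right).
\end{equation*}
I then bound $|W|\le 1$ and $|S(0,a;c)|\le (a,c)$ for $a\ne 0$ (with $|S(0,0;c)|\le c$), and split the outer sum according to whether $n_i=m'\ell_i$. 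The constraint $n_1\ell_2\ne n_2\ell_1$ inherited from $T''$ prevents both equalities from holding simultaneously.

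In the generic range where $n_1\ne m'\ell_1$ and $n_2\ne m'\ell_2$, I apply \eqref{eq:gcdsum} to each $c_i$-sum, producing a saving of $Qp^\varepsilon$ per variable. Combined with trivial $\ell^2$-type estimates on the $\ell_i$-, $n_i$-, and $m'$-sums via \eqref{RS} and \eqref{bjl2bound}, this contribution is bounded by $p^\varepsilon L^2\cdot (M'/p)\cdot N'^2\cdot Q^2$, which after invoking the truncations \eqref{truncation} becomes $\ll p^\varepsilon L^{2+2j}Q^2/d^6$. In the degenerate range, say $n_1=m'\ell_1$, the $c_1$-sum collapses to $\sum_{c_1\asymp Q}\phi(c_1)\ll Q^2$ while the $c_2$-sum still contributes $Qp^\varepsilon$, for a total $Q^3p^\varepsilon$ from the moduli; however, $n_1$ is now determined by $m'$ and $\ell_1$, saving a factor of $N'$. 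Estimating $\sum_{m'\asymp M'/p}|\lambda_g(m'\ell_1)|$ by Cauchy--Schwarz and \eqref{RS}, this piece is $\ll p^\varepsilon L^{5/2+j}Q^3/d^4$. Both contributions are dominated by $p^\varepsilon L^{2+2j}Q^3/d^4$, establishing the stated bound on $T^{(00)}(N,\mathcal{A}_j)$.

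To conclude, I substitute this bound into \eqref{eq:SO'NamplifiedBoundviaTO'NamplifiedSimplified}; the resulting contribution satisfies
\begin{equation*}
\sum_d\sup_{\substack{Q\ll C/d\\M',\,N'}}\frac{\sqrt{N}}{Q\sqrt{L^j}}\cdot\frac{L^{1+j}Q^{3/2}}{d^2}\,p^\varepsilon \ \ll\ \sqrt{N}\,L^{1+j/2}\,C^{1/2}\,p^\varepsilon \ \ll\ N^{3/4}L^{1+3j/4}p^\varepsilon,
\end{equation*}
using $C\asymp\sqrt{NL^j}$ and $\sum_d d^{-5/2}\ll 1$. Since $L^{1+3j/4}\le L^{1+j}$, this is majorised by \eqref{eq:STosc00bound}. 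The main obstacle is the degenerate case: the Ramanujan sum $S(0,0;c_1)=\phi(c_1)$ fails to exhibit square-root cancellation upon summation over $c_1$, and recovering the required bound requires trading this factor-of-$Q$ loss against the fact that the equation $n_1=m'\ell_1$ determines one of the $n$-variables.
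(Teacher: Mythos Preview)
Your proof is correct and follows essentially the same approach as the paper: the same case split according to whether $n_i=m'\ell_i$ (equivalently $pn_i=m\ell_i$), the same use of \eqref{eq:gcdsum} when the Ramanujan-sum argument is nonzero, the trivial bound $S(0,0;c)=\varphi(c)\le c$ in the degenerate case, and the observation that $n_1\ell_2\ne n_2\ell_1$ forbids both degeneracies simultaneously. The paper applies the AM--GM inequality $|ab|\ll|a|^2+|b|^2$ to the coefficients before splitting, whereas you take absolute values and use Cauchy--Schwarz directly; the effect is the same. Your intermediate exponent $L^{5/2+j}$ in the degenerate piece appears to be a slight miscount (the estimate you describe yields $L^{2+3j/2}$, since $\ell_1\asymp L^j$ contributes $L^{j/2}$ rather than $L^{1/2}$ to $\sum_{m'}|\lambda_g(m'\ell_1)|$), but either exponent is dominated by $L^{2+2j}$, so the lemma's bound and the final contribution $N^{3/4}L^{1+3j/4}\le N^{3/4}L^{1+j}$ are unaffected.
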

\begin{proof}
    Using $|ab|\ll |a|^2+|b|^2$, we obtain
 \begin{align}
        T^{(00)}(N, \mathcal{A}_j) \ll & \
         p^{\varepsilon}
\sum_{\substack{\ell_1, \ell_2 \in \mathcal{L}_j \\ n_1, n_2 \asymp N' \\ \ell_1 n_2 \neq \ell_2 n_1}}\left(|b_j(\ell_1)\lambda_g(n_2)|^2+|b_j(\ell_2)\lambda_g(n_1)|^2\right)\nonumber\\
&\times \ 
\sum_{\substack{(c_1, \ell_1) = 1 \\ (c_2, \ell_2) = 1 \\ c_1 \asymp c_2 \asymp Q}}
\sum_{\substack{ m \asymp M' \\ p \mid m}}
(pn_1 - m \ell_1, c_1) \cdot 
(pn_2 - m \ell_2, c_2)
\Big|W\Big(\frac{m \ell_1 - n_1 p}{Z}\Big)
W\Big(\frac{m \ell_2 - n_2 p}{Z}\Big)\Big|.
    \end{align}    
With \eqref{RS} and \eqref{bjl2bound}, the contribution from the terms where simultaneously $pn_1 \neq m \ell_1$ and $p n_2 \neq m \ell_2$ contribute to $ T^{(00)}(N, \mathcal{A}_j)$ can be bounded by 
\begin{equation}
    \ll p^{\varepsilon} L^{2} Q^2 (N')^2\frac{M'}{p}\ll p^\varepsilon \frac{ Q^2 L^{2+2j}}{d^6},
\end{equation}
which contributes to $S''(N, \mathcal{A}_j)$ at most
\begin{equation}
p^{\varepsilon}
    \sum_{d}\sup_{Q\ll C/d}\frac{\sqrt{N}}{Q \sqrt{L^j}} 
    \left(\frac{ Q^2 L^{2+2j}}{d^6} \right)^{1/2}
    \ll p^{\varepsilon} \sqrt{N} L^{1+\frac{j}{2}}.
\end{equation}
This is better than claimed in \eqref{eq:STosc00bound}.

Next, consider the contribution from $p n_1 = m \ell_1$.  
From the assumption $\ell_1 n_2 \neq \ell_2 n_1$, this implies $p n_2 \neq m \ell_2$.
For the $c_2$-sum, we may then use \eqref{eq:gcdsum}.  Using a divisor function bound to control the sum over $m$, 
we see that these terms contribute to $T^{(00)}(N, \mathcal{A}_j)$ at most
\begin{equation}
   p^{\varepsilon} L^2 (N')^2 Q^3 \ll p^{\varepsilon} L^{2+2j} Q^3 d^{-4}.
\end{equation}
Thus the contribution to $S''(N, \mathcal{A})$ is then bounded by
\begin{equation}
  p^{\varepsilon}  \sum_d \sup_{Q\ll C/d}\frac{\sqrt{N}}{Q \sqrt{L^j}} 
  \Big(\frac{L^{2+2j} Q^3}{d^{4}}\Big)^{1/2}
    \ll p^{\varepsilon} N^{3/4} L^{1+\frac{3j}{4}},
\end{equation}
which is slightly better than claimed.
The same bound applies to the terms with $pn_2 = m \ell_2$.
\end{proof}

Collecting Lemmas \ref{lemma:S0NAbound}, \ref{lemma:trivialbound}, \ref{lemma:Tosc0bound} and \ref{lemma:Tosc00bound}, 
and using \eqref{eq:SO'NamplifiedBoundviaTO'NamplifiedSimplified},
we obtain the following lemma which concludes the analysis thus far.
\begin{lemma}\label{lemma:conclusionBeforePoisson}
   Suppose $L\ll p^{1/2-\varepsilon}$.  For $j=1,2$, we have
    \begin{align}
    \label{eq:SboundInTermsofT}
        S(N,\mathcal{A}_j)\ll \sum_d  
        \sum_{\substack{M', N': \eqref{truncation}\\  Q\ll C/d\\  \mathrm{dyadic}
        }}
        \frac{ \sqrt{N}}{Q \sqrt{L^j}}       
        |T(N, \mathcal{A}_j)|^{1/2}
        +p^\varepsilon
        \left(\sqrt{NLp}
        +\frac{NL^{1+j}}{\sqrt{p}}
        +N^{3/4} L^{1+j}
        \right),
    \end{align}
    where 
    \begin{equation}
    \label{eq:TNAdefLate}
        T(N, \mathcal{A}_j)
= 
\sum_{\substack{\ell_1, \ell_2 \in \mathcal{L}_j \\ n_1, n_2 \asymp N' \\ \ell_1 n_2 \neq \ell_2 n_1}}b_j(\ell_1)\overline{b_j(\ell_2)}
\sum_{\substack{(c_1, \ell_1) = 1 \\ (c_2, \ell_2) = 1 \\ c_1 \asymp c_2 \asymp Q}}
\chi(c_1) \overline{\chi}(c_2)
\lambda_{g}(n_1) \lambda_{g}(n_2) \cdot V,
\end{equation}
and $V = 
    V(\ell_1, \ell_2, n_1, n_2, c_1, c_2)  $ is defined by
\begin{equation}
\label{eq:Vdef}
V=        \sum_{\substack{m \asymp M'}}
S(0, p n_1 - m \ell_1;c_1) S(0, pn_2 - m \ell_2;c_2)
 W\Big(\frac{m \ell_1 - n_1 p}{Z}, \cdot\Big)
  W\Big(\frac{m \ell_2 - n_2 p}{Z}, \cdot\Big).
\end{equation}
\end{lemma}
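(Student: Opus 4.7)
The plan is purely bookkeeping: we assemble the bounds proved in the preceding lemmas (Lemmas \ref{lemma:S0NAbound}, \ref{lemma:trivialbound}, \ref{lemma:Tosc0bound}, \ref{lemma:Tosc00bound}) through the successive decompositions of $S(N,\mathcal{A}_j)$, and identify the $T(N,\mathcal{A}_j)$-contribution that survives for later treatment.

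First, I would use the decomposition $S(N,\mathcal{A}_j) = S_0(N,\mathcal{A}_j) + S'(N,\mathcal{A}_j)$ corresponding to $(\ell,c)>1$ and $(\ell,c)=1$. Lemma \ref{lemma:S0NAbound} absorbs $S_0(N,\mathcal{A}_j)$ into $p^{\varepsilon}\sqrt{Np}$ (which for $j=1$ is even much stronger, namely $p^{-2024}$), accounting for the $\sqrt{NLp}$-type error term in the claim. Then I would split $S'(N,\mathcal{A}_j) = S^{(0)}(N,\mathcal{A}_j) + S''(N,\mathcal{A}_j)$ according to $p\mid m$ versus $p\nmid m$, and invoke Lemma \ref{lemma:trivialbound} to bound $S^{(0)}(N,\mathcal{A}_j)$ by $p^{-1/2+\varepsilon} N L^{j+1}$, which matches the corresponding error term in \eqref{eq:SboundInTermsofT}.

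Next, for the remaining $S''(N,\mathcal{A}_j)$ I would apply Cauchy--Schwarz (sacrificing the $m$-sum) together with \eqref{RS}, as in \eqref{eq:SO'NamplifiedBoundviaTO'Namplified}, and simplify the coefficient using \eqref{eq:M'phase}, \eqref{eq:Zdef} and \eqref{eq:M'N'samesizeOscillatory} to arrive at \eqref{eq:SO'NamplifiedBoundviaTO'NamplifiedSimplified}. Expanding the square produces the quadratic expression $T'(N,\mathcal{A}_j)$, which I then decompose into the diagonal piece $T^{(0)}(N,\mathcal{A}_j)$ (with $\ell_1 n_2 = \ell_2 n_1$) and the off-diagonal piece $T''(N,\mathcal{A}_j)$; Lemma \ref{lemma:Tosc0bound} directly shows that $T^{(0)}$ contributes at most $p^{\varepsilon}\sqrt{NLp}$ to $S''(N,\mathcal{A}_j)$, again matching one of the three advertised error terms.

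Finally, inclusion-exclusion gives $T''(N,\mathcal{A}_j) = T(N,\mathcal{A}_j) - T^{(00)}(N,\mathcal{A}_j)$, where the $T^{(00)}$-part is handled by Lemma \ref{lemma:Tosc00bound} and contributes $\ll p^{\varepsilon} N^{3/4} L^{1+j}$, which is the last listed error term. What remains is the $T(N,\mathcal{A}_j)$-term defined by \eqref{eq:TNAdefLate}--\eqref{eq:Vdef}, inserted through \eqref{eq:SO'NamplifiedBoundviaTO'NamplifiedSimplified}; this produces exactly the first term $\sum_d \sum_{M',N',Q} \frac{\sqrt{N}}{Q\sqrt{L^j}} |T(N,\mathcal{A}_j)|^{1/2}$ in \eqref{eq:SboundInTermsofT}. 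There is no genuine obstacle here: the only thing to keep track of is that the Cauchy--Schwarz simplification leading to \eqref{eq:SO'NamplifiedBoundviaTO'NamplifiedSimplified} requires the assumption $L \ll p^{1/2-\varepsilon}$ (so that $C = \sqrt{NL^j} < p^{1-\varepsilon}$, which is what allowed Voronoi to be applied in Lemma \ref{lemma:S'NellPostVoronoi} in the first place), so this hypothesis must be stated. Collecting everything yields the desired bound.
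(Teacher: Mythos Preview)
Your proposal is correct and matches the paper's approach exactly: the paper states this lemma simply as the conclusion obtained by ``collecting Lemmas \ref{lemma:S0NAbound}, \ref{lemma:trivialbound}, \ref{lemma:Tosc0bound} and \ref{lemma:Tosc00bound}, and using \eqref{eq:SO'NamplifiedBoundviaTO'NamplifiedSimplified}.'' Your bookkeeping of the successive decompositions and the identification of each error term with the corresponding lemma is precisely what is intended.
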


\subsection{Poisson summation}

Now we focus on  $T(N, \mathcal{A}_j)$ and $V$.  
Applying Poisson summation in $m$ modulo $c_1 c_2$, we have
\begin{equation}
\label{eq:VPoissonDef}
    V = 
    \sum_{m} \widehat{S}(m) \cdot \widehat{I}(m),
\end{equation}
where
\begin{equation}
     \widehat{S}(m) = \frac{1}{c_1 c_2} \sum_{x \shortmod{c_1 c_2}} 
    S(0, p n_1 - x \ell_1;c_1) S(0, pn_2 - x \ell_2;c_2) e_{c_1 c_2}(xm),
\end{equation}
and 
\begin{equation}
    \widehat{I}(m) = 
     \int_{-\infty}^{\infty} 
     W\Big(\frac{x \ell_1 - n_1 p}{Z}, \cdot\Big)
  W\Big(\frac{x \ell_2 - n_2 p}{Z}, \cdot\Big) e_{c_1 c_2}(-xm) dx.
\end{equation}

\begin{lemma}
\label{lemma:IhatProperties}
    The expression $\widehat{I}(m)$ is very small unless
    \begin{equation}
   \label{eq:IhatSupportTrivial}
    |\ell_2 n_1 - \ell_1 n_2| \ll  \frac{L^jZ}{p}.
\end{equation}
In addition, for some essentially $p^{\varepsilon}$-inert $w(\cdot)$, we have
\begin{equation}
\label{eq:IhatInert}
    \widehat{I}(m) 
    = \frac{Z}{\ell_2} e\Big(-\frac{m n_2 p}{\ell_2 c_1 c_2}\Big) w(m,\cdot),
\end{equation}
 and this is very small unless
\begin{equation}
\label{eq:Ihatdecay}
    |m| \ll p^{\varepsilon} \frac{L^jQ^2}{Z}.
\end{equation}
\end{lemma}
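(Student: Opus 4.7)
The plan is to verify the three claims by a change of variables in $u$ followed by a non-stationary phase argument. First, for the support condition \eqref{eq:IhatSupportTrivial}, observe that each factor $W((x\ell_i - n_i p)/Z,\cdot)$ in the integrand restricts $x$ to an interval of length $\asymp Z/L^j$ (since $\ell_i \asymp L^j$) centered at $x_i := n_i p/\ell_i$. For $\widehat{I}(m)$ to be non-negligible, these two intervals must intersect, which requires $|x_1 - x_2| \ll Z/L^j$. Since
\[
x_1 - x_2 = \frac{p(n_1 \ell_2 - n_2 \ell_1)}{\ell_1 \ell_2} \quad \text{and} \quad \ell_1 \ell_2 \asymp L^{2j},
\]
this translates directly into $|\ell_2 n_1 - \ell_1 n_2| \ll L^j Z/p$.

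Next, to obtain the closed form \eqref{eq:IhatInert}, I change variables $u = (x \ell_2 - n_2 p)/Z$, so $dx = (Z/\ell_2)\,du$ and
\[
\frac{x\ell_1 - n_1 p}{Z} = \frac{\ell_1}{\ell_2}\, u + A, \qquad A := \frac{p(n_2 \ell_1 - n_1 \ell_2)}{Z \ell_2},
\]
while $e_{c_1 c_2}(-xm)$ factors as $e(-m n_2 p/(\ell_2 c_1 c_2))\, e(\phi u)$ with $\phi := -mZ/(\ell_2 c_1 c_2)$. By the already established \eqref{eq:IhatSupportTrivial}, $|A| \ll 1$, so pulling the $u$-independent factors out of the integral yields
\[
\widehat{I}(m) = \frac{Z}{\ell_2}\, e\!\Big(\!-\frac{m n_2 p}{\ell_2 c_1 c_2}\Big) \int W\!\Big(\frac{\ell_1}{\ell_2} u + A,\cdot\Big) W(u,\cdot)\, e(\phi u)\,du.
\]
Since $W(\cdot,\cdot)$ is essentially $p^\varepsilon$-inert in its first argument (Lemma \ref{lemma:ImnOscillatory}) and $u$ is constrained to $[-1,1]$, the remaining $u$-integral defines an essentially $p^\varepsilon$-inert function $w(m,\cdot)$; the dependence on $m$ enters only through $\phi$, which is linear in $m$ with scale $L^j Q^2/Z$, so derivatives in $m$ are controlled on this natural scale, yielding \eqref{eq:IhatInert}.

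Finally, the decay claim \eqref{eq:Ihatdecay} follows by applying the integration-by-parts lemma (Lemma \ref{lemma:integrationbyparts}) to the remaining $u$-integral. The phase derivative with respect to $u$ has size $|\phi| \asymp |m| Z/(L^j Q^2)$ uniformly on $u \asymp 1$, while the amplitude and all its $u$-derivatives are bounded by powers of $p^\varepsilon$. Consequently, once $|m| \gg p^\varepsilon L^j Q^2 / Z$ the ratio $|\phi|/p^\varepsilon$ exceeds any fixed power of $p^\varepsilon$, and repeated integration by parts produces arbitrary polynomial decay. All three statements are essentially bookkeeping; the only point that requires a moment of care is verifying that the change of variables preserves the inert structure of the amplitude, which is immediate because $m$ enters the post-change integrand linearly in the phase and not at all in the amplitude $W(\tfrac{\ell_1}{\ell_2}u + A,\cdot)W(u,\cdot)$.
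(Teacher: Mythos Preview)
Your proof is correct and follows essentially the same approach as the paper: the paper performs the identical change of variables (written as $x \rightarrow (Zx + n_2 p)/\ell_2$, which is your substitution $u = (x\ell_2 - n_2 p)/Z$ inverted), reads off the support condition \eqref{eq:IhatSupportTrivial} from the second $W$-factor after the change of variables, and then invokes integration by parts for \eqref{eq:Ihatdecay}. The only cosmetic difference is that you argue the support condition first via an interval-intersection heuristic before changing variables, whereas the paper does the substitution immediately and deduces all three claims from the resulting integral representation.
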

\begin{proof}
    Changing variables $x \rightarrow \frac{Zx + n_2 p}{\ell_2}$ shows
\begin{equation}
\label{eq:IhatAfterCOV}
    \widehat{I}(m) = \frac{Z}{\ell_2}
    e\Big(- \frac{m n_2 p}{\ell_2 c_1 c_2} \Big)
\int_{-\infty}^{\infty} 
     W(x, \cdot)
  W\Big(\frac{x \ell_1}{\ell_2 } +  \frac{(\ell_1 n_2 p - \ell_2 n_1 p)}{\ell_2 Z}, \cdot\Big) e_{\ell_2 c_1 c_2}(-xmZ) dx.    
\end{equation}
The integral is trivially very small unless \eqref{eq:IhatSupportTrivial} holds.
In addition, the standard integration by parts bound shows 
$\widehat{I}(m)$ is very small unless
\eqref{eq:Ihatdecay} holds.  Then with this condition in place, the exponential factor within the integrand in \eqref{eq:IhatAfterCOV} is essentially $p^{\varepsilon}$-inert. 
\end{proof}

Write $c_1 = c_{10} c_1'$ and $c_2 = c_{20} c_2'$ where $c_{10}$ and $c_{20}$ share all the same prime factors, and where $(c_1', c_2') = (c_1' c_2', c_{10} c_{20}) =  1$.  Note $(c_1, c_2) = (c_{10}, c_{20})$.
We also implicitly assume throughout that $(c_1, \ell_1) = (c_2, \ell_2) = 1$ since this assumption is in place via \eqref{eq:TNAdefLate}.
\begin{lemma}
\label{lemma:ShatEvaluation}
We have $\widehat{S}(m) = 0$ unless $(c_{10}, c_{20}) | m$ and $(m, c_1' c_2') = 1$.  
Under these assumptions, we have
\begin{equation}
    \widehat{S}(m) = 
     e_{c_1'}(\overline{c_{10} c_2 \ell_1} p n_1 m)
     e_{c_2'}(\overline{c_{20} c_1 \ell_2} p n_2 m)
      \mathop{\sumstar_{t_1 \shortmod{c_{10}}} \quad  \sumstar_{t_2 \shortmod{c_{20}}}}_{c_{20} t_1  + c_{10} t_2   \equiv m \shortmod{c_{10} c_{20}}} 
        e_{c_{10}}(\overline{c_1' c_2' \ell_1} t_1 p n_1) e_{c_{20}}(\overline{c_1' c_2' \ell_2} t_2 pn_2).
\end{equation}
In addition, $\widehat{S}(0) = 0$ unless $c_1 = c_2$ in which case
\begin{equation}
\label{eq:Shat0Evaluation}
\widehat{S}(0) = 
S (n_1 \ell_2 - n_2 \ell_1, 0;c_1).
\end{equation}
\end{lemma}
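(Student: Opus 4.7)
The plan is a direct evaluation by opening the Ramanujan sums and analyzing the resulting character sum via CRT. Writing $S(0,pn_i-x\ell_i;c_i)=\sumstar_{h_i\shortmod{c_i}}e_{c_i}(h_i(pn_i-x\ell_i))$ and substituting into the definition of $\widehat S(m)$, the orthogonality of the inner $x$-sum modulo $c_1c_2$ produces
\[
\widehat{S}(m) = \sumstar_{h_1 \shortmod{c_1}} \sumstar_{h_2 \shortmod{c_2}} e_{c_1}(h_1 p n_1) e_{c_2}(h_2 p n_2)\,\delta\bigl(h_1 c_2 \ell_1 + h_2 c_1 \ell_2 \equiv m \shortmod{c_1 c_2}\bigr).
\]

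Next, I decompose the congruence using the coprime factorization $c_1c_2=(c_{10}c_{20})\cdot(c_1'c_2')$, noting also that $c_1'$ and $c_2'$ are coprime to each other. Reducing modulo $c_1'$ gives $h_1(c_{20}c_2'\ell_1)\equiv m\shortmod{c_1'}$, which uniquely determines $h_1\shortmod{c_1'}$ and demands $(m,c_1')=1$ since $h_1$ must be a unit; the analogous reduction modulo $c_2'$ is symmetric. After the bijective substitutions $t_1=h_1 c_2'\ell_1\shortmod{c_{10}}$ and $t_2=h_2 c_1'\ell_2\shortmod{c_{20}}$, the residual congruence modulo $c_{10}c_{20}$ becomes $c_{20}t_1+c_{10}t_2\equiv m\shortmod{c_{10}c_{20}}$, whose reduction modulo $\gcd(c_{10},c_{20})$ forces $(c_{10},c_{20})\mid m$. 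Expanding $e_{c_i}(h_ipn_i)$ via CRT between $c_{i0}$ and $c_i'$ and plugging in the resolved values of $h_i\shortmod{c_i'}$ together with the new $t_i$ variables yields exactly the stated formula.

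For $\widehat S(0)$, the condition $(0,c_1'c_2')=1$ forces $c_1'=c_2'=1$, so the residual congruence becomes $c_{20}t_1+c_{10}t_2\equiv 0\shortmod{c_{10}c_{20}}$. A prime-by-prime analysis (at a prime $q$ with $q^{e_1}\|c_{10}$ and $q^{e_2}\|c_{20}$, assume WLOG $e_1\geq e_2$, so that the congruence reduces to $t_1+q^{e_1-e_2}t_2\equiv 0\shortmod{q^{e_1}}$, which has a unit solution $t_1$ only if $e_1=e_2$) shows a solution with $(t_i,c_{i0})=1$ exists only when $c_{10}=c_{20}$, i.e., $c_1=c_2=:c$. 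In that case the congruence reduces to $t_2\equiv -t_1\shortmod{c}$, so
\[
\widehat{S}(0) = \sumstar_{t_1\shortmod{c}} e_c\bigl(t_1 p\,\overline{\ell_1\ell_2}\,(n_1\ell_2-n_2\ell_1)\bigr) = S(n_1\ell_2-n_2\ell_1,0;c_1),
\]
where the last equality uses invariance of Ramanujan sums under unit rescaling of the argument modulo $c$.

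The only genuine difficulty is bookkeeping: tracking inverse symbols consistently across the four moduli $c_{10},c_{20},c_1',c_2'$, of which only the pair $(c_1',c_2')$ is mutually coprime. No analytic input is required; the argument is purely an elementary character-sum computation.
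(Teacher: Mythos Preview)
Your proof is correct and follows essentially the same approach as the paper: open the Ramanujan sums, collapse the $x$-sum by orthogonality to obtain the double sum over $h_1,h_2$ with the congruence constraint, then split via CRT into the $c_1'$, $c_2'$, and $c_{10}c_{20}$ components and perform the same change of variables $t_i = h_i c_{3-i}'\ell_i$ modulo $c_{i0}$. Your treatment of $\widehat S(0)$ via a prime-by-prime argument is a minor stylistic variant of the paper's direct divisibility deduction ($c_{20}t_1\equiv 0\shortmod{c_{10}}$ forces $c_{10}\mid c_{20}$ and vice versa), but the content is identical.
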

\begin{proof}
    By orthogonality of characters, we deduce
    \begin{equation}
        \widehat{S}(m) = 
       \mathop{\sumstar_{t_1 \shortmod{c_1}} \quad \sumstar_{t_2 \shortmod{c_2}}}_{-c_2 t_1 \ell_1 - c_1 t_2 \ell_2 + m \equiv 0 \shortmod{c_1 c_2}} 
        e_{c_1}(t_1 p n_1) e_{c_2}(t_2 pn_2).
    \end{equation}
This factors by the Chinese remainder theorem.  Consider the $c_1'$ factor of the sum.  Here $t_1$ is uniquely determined by $c_{2} t_1 \ell_1 \equiv m \pmod{c_1'}$, equivalently, $t_1 \equiv m \overline{c_2 \ell_1} \pmod{c_1'}$.  However, the condition that $(t_1, c_1) = 1$ then additionally forces $(m, c_1') = 1$.
Hence this factor evaluates as
\begin{equation}
\label{eq:c1'sum}
    e_{c_1'}(\overline{c_{10} c_2 \ell_1} p n_1 m).
\end{equation}
By symmetry, and assuming $(m, c_2') = 1$,
the factor of modulus $c_2'$ equals
\begin{equation}
\label{eq:c2'sum}
    e_{c_2'}(\overline{c_{20} c_1 \ell_2} p n_2 m).
\end{equation}

Finally, consider the sum modulo $c_{10} c_{20}$.  Here we change variables $t_1 \rightarrow \overline{c_2' \ell_1} t_1$ and likewise $t_2 \rightarrow \overline{c_1' \ell_2} t_2$.  This factor then evaluates as
\begin{equation}
\label{eq:c10c20sum}
     \mathop{\sumstar_{t_1 \shortmod{c_{10}}} \quad  \sumstar_{t_2 \shortmod{c_{20}}}}_{c_{20} t_1  + c_{10} t_2   \equiv m \shortmod{c_{10} c_{20}}}
        e_{c_{10}}(\overline{c_1' c_2' \ell_1} t_1 p n_1) e_{c_{20}}(\overline{c_1' c_2' \ell_2} t_2 pn_2).
\end{equation}
Note that this sum is empty unless $(c_{10}, c_{20})$ divides $m$.

Now suppose $m=0$.  The previous work shows $\widehat{S}(0) = 0$ unless $c_1' = c_2' = 1$.  The congruence $c_{20} t_1 + c_{10} t_2 \equiv 0 \pmod{c_{10} c_{20}}$ then implies $c_{20} t_1 \equiv 0 \pmod{c_{10}}$, and similarly $c_{10} t_2 \equiv 0 \pmod{c_{20}}$.  Since $(t_1, c_{10}) = (t_2, c_{20}) = 1$, this implies $c_{10}$ divides $c_{20}$ and vice-versa.  Hence $c_{10} = c_{20}$.
The claimed formula for $\widehat{S}(0)$ now follows directly.
\end{proof}

We also record the following alternative formula for $\widehat{S}(m)$.
As shorthand notations, we let
\begin{equation}
\alpha_{\infty}
= \alpha_{\infty}(c_1, c_2, \ell_2, p, n_2, m)
= 
e_{c_1 c_2 \ell_2}(p n_2 m),
\end{equation}
and
\begin{align}
\label{eq:alphac0part}
\alpha_0(m)
=  
 e_{c_{10} c_{20}}(-\overline{c_1' c_2' \ell_2} p n_2 m)
  \mathop{\sumstar_{t_1 \shortmod{c_{10}}} \quad  \sumstar_{t_2 \shortmod{c_{20}}}}_{c_{20} t_1  + c_{10} t_2   \equiv m \shortmod{c_{10} c_{20}}}
        e_{c_{10}}(\overline{c_1' c_2' \ell_1} t_1 p n_1) e_{c_{20}}(\overline{c_1' c_2' \ell_2} t_2 pn_2).
\end{align}
\begin{lemma}
\label{lemma:ShatEvaluation2}
For $(m, c_1' c_2') = 1$ 
we have
\begin{equation}
    \widehat{S}(m) = 
    \alpha_{\infty} 
    \cdot
    \alpha_0(m)
    \cdot 
    e_{c_1'}(\overline{c_{10} c_2 \ell_1} p n_1 m)
   e_{c_1' \ell_2 }(-\overline{c_{10} c_{2} } p n_2 m ).
\end{equation}
\end{lemma}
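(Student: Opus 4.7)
The plan is to start from the explicit formula for $\widehat{S}(m)$ in Lemma \ref{lemma:ShatEvaluation} and rearrange the two factors involving $n_{2}$ and $\ell_{2}$ into the desired form using CRT and additive reciprocity. Write $\Sigma$ for the double $t_{1},t_{2}$ character sum appearing in Lemma \ref{lemma:ShatEvaluation}. By the definition \eqref{eq:alphac0part}, $\Sigma = e_{c_{10}c_{20}}(\overline{c_{1}'c_{2}'\ell_{2}}\, p n_{2} m) \cdot \alpha_{0}(m)$, so Lemma \ref{lemma:ShatEvaluation} reads
\begin{equation}
\widehat{S}(m) \; = \; \alpha_{0}(m) \cdot e_{c_{1}'}(\overline{c_{10}c_{2}\ell_{1}}\, p n_{1} m) \cdot E(m),
\qquad
E(m) := e_{c_{2}'}(\overline{c_{20} c_{1} \ell_{2}}\, p n_{2} m) \cdot e_{c_{10} c_{20}}(\overline{c_{1}' c_{2}' \ell_{2}}\, p n_{2} m).
\end{equation}
Comparing with the target, it therefore suffices to show
\begin{equation}
E(m) \; = \; \alpha_{\infty} \cdot e_{c_{1}' \ell_{2}}(- \overline{c_{10} c_{2}}\, p n_{2} m).
\end{equation}

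For the first move, I will combine the two factors in $E(m)$ via CRT. Since $c_{1}= c_{10}c_{1}'$ and $c_{2}=c_{20}c_{2}'$ with $(c_{2}', c_{10}c_{20}) = (c_{1}', c_{10}c_{20}) = 1$, and $(\ell_{2}, c_{2}) = 1$ (which together with $c_{10}, c_{20}$ having the same prime support as divisors of $c_{2}$ yields $(\ell_{2}, c_{10}) = 1$), all the inverses below are well-defined. Inside the $c_{2}'$-exponential one can rewrite $\overline{c_{20} c_{1} \ell_{2}} \equiv \overline{c_{10} c_{20} c_{1}' \ell_{2}} \pmod{c_{2}'}$, and inside the $c_{10}c_{20}$-exponential $\overline{c_{1}' c_{2}' \ell_{2}} \equiv \overline{c_{1}' \ell_{2}} \cdot \overline{c_{2}'} \pmod{c_{10}c_{20}}$. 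Using the elementary identity $\frac{u}{a} + \frac{v}{b} \equiv \frac{ub + va}{ab} \pmod{1}$ with $a = c_{2}'$, $b = c_{10}c_{20}$, a short CRT check (compute the result modulo $c_{2}'$ and modulo $c_{10}c_{20}$ separately) yields
\begin{equation}
E(m) \; = \; e_{c_{10} c_{2}}\bigl(\overline{c_{1}' \ell_{2}}\, p n_{2} m\bigr).
\end{equation}

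For the second move, I will apply additive reciprocity to this single exponential. Since $(c_{10} c_{2}, c_{1}' \ell_{2}) = 1$ and $(c_{10} c_{2})(c_{1}' \ell_{2}) = c_{1} c_{2} \ell_{2}$, the standard reciprocity
\begin{equation}
\frac{\overline{c_{1}' \ell_{2}}}{c_{10} c_{2}} \; \equiv \; \frac{1}{c_{1} c_{2} \ell_{2}} \, - \, \frac{\overline{c_{10} c_{2}}}{c_{1}' \ell_{2}} \pmod{1}
\end{equation}
after multiplying by $p n_{2} m$ and exponentiating gives
\begin{equation}
E(m) \; = \; e_{c_{1} c_{2} \ell_{2}}(p n_{2} m) \cdot e_{c_{1}' \ell_{2}}(-\overline{c_{10} c_{2}}\, p n_{2} m) \; = \; \alpha_{\infty} \cdot e_{c_{1}' \ell_{2}}(-\overline{c_{10} c_{2}}\, p n_{2} m),
\end{equation}
which is the required identity. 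The proof is essentially algebraic bookkeeping; the only thing to be careful about is verifying the various coprimality conditions, but these all follow cleanly from the definition of the decomposition $c_{i} = c_{i0} c_{i}'$ and the standing assumption $(c_{i}, \ell_{i}) = 1$. No truly new idea is required beyond a single CRT merge followed by additive reciprocity.
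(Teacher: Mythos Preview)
Your proof is correct and follows essentially the same approach as the paper's: both arguments combine the elementary additive reciprocity formula with a CRT manipulation applied to the $n_2$-factor $e_{c_2'}(\overline{c_{20} c_1 \ell_2}\, p n_2 m)$. The only cosmetic difference is the order of operations---you first merge the $c_2'$-exponential with the $c_{10}c_{20}$-exponential via CRT into a single $e_{c_{10}c_2}(\cdot)$ and then apply reciprocity once, whereas the paper applies reciprocity first to $e_{c_2'}(\cdot)$ and then splits the resulting $e_{c_1' c_{10} c_{20} \ell_2}(\cdot)$ via CRT---but these are trivially equivalent reorderings of the same algebra.
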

\begin{proof}
    We use the elementary reciprocity formula repeatedly on \eqref{eq:c2'sum}, giving
\begin{align}
\label{eq:c2'sumEvalReciprocity}
     e_{c_2'}(\overline{c_{20} c_1 \ell_2} p n_2 m)
     = e_{c_1' c_{10} c_{20} \ell_2}(-\overline{c_2'} p n_2 m)
 e_{c_1 c_2 \ell_2}(p  n_2 m)
 \nonumber\\
 = 
 e_{c_{10} c_{20}}(-\overline{c_1' c_2' \ell_2} p n_2 m)
e_{c_1' \ell_2}(- \overline{c_{2} c_{10}} p n_2 m)
\cdot \alpha_{\infty}
 .
\end{align}
Applying  \eqref{eq:c2'sumEvalReciprocity} into
Lemma \ref{lemma:ShatEvaluation} gives the claimed formula.
\end{proof}

Next we refine our evaluation of $\widehat{S}(m)$.
\begin{lemma}
\label{lemma:ShatEvaluationSimplified2}
Suppose that $(m, c_1' c_2') = 1$ and let $D=\ell_2 n_1 - \ell_1 n_2.$ Then 
\begin{align}
 \widehat{S}(m) = 
 \alpha_{\infty} 
    \cdot
    \alpha_0(m)
    \cdot 
 \begin{cases}
 \label{eq:ShatmEvaluationelliDistinct}
    e_{c_1' \ell_2}(\overline{c_{10} c_2 \ell_1} pm D), \quad &\text{if} \quad (\ell_1, \ell_2)=1\\
     e_{\ell_2}(-\overline{c_{1} c_{2} } p n_2 m)
     \cdot
    e_{c_1'}(\overline{c_{10} c_2 \ell_1 \ell_2} pm D), \quad &\text{if} \quad   (\ell_1, \ell_2)>1.
\end{cases}
\end{align}
\end{lemma}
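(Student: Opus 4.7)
The plan is to derive Lemma \ref{lemma:ShatEvaluationSimplified2} from Lemma \ref{lemma:ShatEvaluation2} by consolidating the two exponential factors $e_{c_1'}(\overline{c_{10} c_2 \ell_1} p n_1 m)$ and $e_{c_1' \ell_2}(-\overline{c_{10} c_2} p n_2 m)$ into a form in which $D = \ell_2 n_1 - \ell_1 n_2$ appears naturally. The exact shape of the consolidation depends on whether $\ell_1, \ell_2$ are coprime, which motivates the case split. Since $\mathcal{L}_j$ consists of primes (when $j=1$) or squares of primes (when $j=2$), the non-coprime case reduces to $\ell_1 = \ell_2$, so the two branches in the lemma are exhaustive and mutually exclusive.

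\textbf{Case 1: $(\ell_1, \ell_2) = 1$.} From the summation condition $(c_1, \ell_1) = 1$ we obtain $(\ell_1, c_1') = 1$, and combined with $(\ell_1, \ell_2) = 1$ this guarantees that $\ell_1$ is invertible modulo $c_1' \ell_2$. The plan is to lift the first factor to the larger modulus,
\begin{equation}
e_{c_1'}(\overline{c_{10} c_2 \ell_1} p n_1 m) = e_{c_1' \ell_2}(\ell_2 \overline{c_{10} c_2 \ell_1} p n_1 m),
\end{equation}
which is legitimate because the mod $c_1'$ and mod $c_1' \ell_2$ inverses of $c_{10} c_2 \ell_1$ agree modulo $c_1'$. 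Multiplying by the second factor and using the identity $\ell_1 \overline{c_{10} c_2 \ell_1} \equiv \overline{c_{10} c_2} \pmod{c_1' \ell_2}$ reduces the exponent to $\overline{c_{10} c_2 \ell_1}\, pm(\ell_2 n_1 - \ell_1 n_2) = \overline{c_{10} c_2 \ell_1}\, pm D$, delivering the first branch.

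\textbf{Case 2: $\ell_1 = \ell_2 =: \ell$.} Here $(c_1', \ell) = 1$, so the Chinese remainder theorem gives
\begin{equation}
e_{c_1' \ell}(-\overline{c_{10} c_2}\, p n_2 m) = e_{c_1'}(-\overline{c_{10} c_2 \ell}\, p n_2 m)\cdot e_{\ell}(-\overline{c_1' c_{10} c_2}\, p n_2 m),
\end{equation}
and the second factor simplifies to $e_\ell(-\overline{c_1 c_2}\, p n_2 m)$ since $c_1 = c_{10} c_1'$. Combining the resulting $e_{c_1'}$ factor with $e_{c_1'}(\overline{c_{10} c_2 \ell}\, p n_1 m)$ yields $e_{c_1'}(\overline{c_{10} c_2 \ell}\, pm(n_1 - n_2))$. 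Since $D = \ell(n_1 - n_2)$ in this case, one has $\overline{c_{10} c_2 \ell}(n_1 - n_2) \equiv \overline{c_{10} c_2 \ell_1 \ell_2}\, D \pmod{c_1'}$, matching the second branch exactly.

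\textbf{Main obstacle.} There is no deep analytic input here; the difficulty is entirely bookkeeping. The symbol $\overline{(\cdot)}$ refers to inverses modulo different moduli in different factors (variously $c_1'$, $c_1' \ell_2$, $\ell$, $c_{10} c_{20}$), and one must verify that all the inversions are well-defined. The ingredients are $(c_1, \ell_1) = (c_2, \ell_2) = 1$ together with the structural conditions $c_1 = c_{10} c_1'$, $c_2 = c_{20} c_2'$, $(c_1' c_2', c_{10} c_{20}) = (c_1', c_2') = 1$. In particular $(c_{10}, \ell_2) = 1$, needed in the CRT step of Case 2, follows from the fact that the prime divisors of $c_{10}$ are exactly those of $c_{20}$, hence of $c_2$, and thus coprime to $\ell_2$. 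Once these coprimalities are in hand, the rest is direct modular arithmetic.
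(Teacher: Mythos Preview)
Your proof is correct and follows essentially the same approach as the paper: in the coprime case you lift both exponentials to modulus $c_1'\ell_2$ and combine, and in the non-coprime case you split via CRT using $(\ell_2, c_1') = 1$ and then merge the $c_1'$-factors. The only cosmetic difference is that you specialize Case 2 to $\ell_1 = \ell_2$, whereas the paper argues under the (equivalent, in context) hypothesis that $\ell_1$ and $\ell_2$ share a unique prime factor.
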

\begin{proof}
Consider the case when $(\ell_1, \ell_2) = 1$ first. We write
 $e_{c_1'}(\overline{c_{10} c_2 \ell_1} p n_1 m)
 = e_{c_1' \ell_2}(\overline{c_{10} c_2 \ell_1} p m \ell_2 n_1)$ and
$e_{c_1' \ell_2 }(-\overline{c_{10} c_{2} } p n_2 m )
    =  e_{c_1' \ell_2 }(-\overline{c_{10} c_{2} \ell_1 } p m \ell_1 n_2 ),$ which  
  gives the desired formula.
    
    Next suppose $(\ell_1, \ell_2) > 1$.  This implies that $(\ell_2, c_1') = 1$ since $\ell_1$ and $\ell_2$ share a unique prime factor, and since $(\ell_1, c_1) = 1$ by assumption.  Then we write by the Chinese remainder theorem
       $ e_{c_1' \ell_2 }(-\overline{c_{10} c_{2} } p n_2 m)
        = e_{c_1'}(-\overline{c_{10} c_{2} \ell_2 } p n_2 m)
        e_{\ell_2}(-\overline{c_{1} c_{2} } p n_2 m).$
 The formula follows by combining the additive characters of modulus $c_1'$ via
 $e_{c_1'}(\overline{c_{10} c_2 \ell_1} p n_1 m)
     e_{c_1'}(-\overline{c_{10} c_{2} \ell_2 } p n_2 m)
  = e_{c_1'}(\overline{c_{10} c_2 \ell_1 \ell_2} p m D).$
\end{proof}

\subsection{Zero frequency}
Let $V_0$ correspond to the term with $m=0$ in \eqref{eq:VPoissonDef}, and let $T_0(N, \mathcal A_j)$ denote the contribution of $V_0$ to $T(N, \mathcal{A}_j)$ in \eqref{eq:TNAdefLate}.
Similarly, let $V_{\neq 0} := V - V_0$ and $T_{\neq 0}:=T_{\neq 0}(N, \mathcal A_j) = T(N, \mathcal{A}_j) - T_0(N, \mathcal A_j)$.
\begin{lemma}
\label{lemma:T0'bound}
 For $j=1,2$, we have 
\begin{equation}
\label{eq:T0'bound}
    T_0(N, \mathcal A_j) \ll 
    p^\varepsilon \frac{Q^2 L^{1+\frac{5j}{2}} p}{d^4 \sqrt{N}}, 
\end{equation}
and its contribution to $S(N, \mathcal{A}_j)$ via \eqref{eq:SboundInTermsofT} is bounded by 
\begin{equation}
    \ll p^{\varepsilon} N^{1/4} p^{1/2} L^{\frac12 + \frac{3j}{4}}.
\end{equation}
\end{lemma}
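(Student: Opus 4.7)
My plan is to specialize Lemmas \ref{lemma:ShatEvaluation} and \ref{lemma:IhatProperties} to $m = 0$, take absolute values, and count the surviving tuples. From Lemma \ref{lemma:ShatEvaluation}, $\widehat{S}(0)$ vanishes unless $c_1 = c_2 =: c$, in which case it equals the Ramanujan sum $S(n_1 \ell_2 - n_2 \ell_1, 0; c)$ and is thus bounded by $(D, c)$, where $D := \ell_2 n_1 - \ell_1 n_2$. From Lemma \ref{lemma:IhatProperties}, using $\ell_2 \asymp L^{j}$, we have $|\widehat{I}(0)| \ll Z / L^{j}$, with the additional support condition $|D| \ll L^{j} Z / p$. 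Since $T_0$ carries the restriction $\ell_1 n_2 \neq \ell_2 n_1$, we have $D \neq 0$, and \eqref{eq:gcdsum} then gives $\sum_{c \asymp Q} (D, c) \ll Q p^{\varepsilon}$.

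Next, I would apply Cauchy--Schwarz in the bilinear form sense, splitting the four arithmetic factors as $|b_j(\ell_1) \lambda_g(n_1)| \cdot |b_j(\ell_2) \lambda_g(n_2)|$, and then invoke the $\ell_{2}$-bounds \eqref{RS} and \eqref{bjl2bound} together with the counting estimate: for fixed $(\ell_1, n_1)$, the number of $(\ell_2, n_2) \in \mathcal{L}_j \times (N', 2N']$ with $|D| \ll L^{j} Z / p$ is $\ll L (1 + Z / p)$, since for each of the $\ll L$ values of $\ell_2$ the variable $n_2$ lies in an interval of length $L^{j} Z /(p \ell_1) \asymp Z / p$. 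This yields an estimate of the shape
\begin{equation*}
|T_0(N, \mathcal{A}_j)| \ll p^{\varepsilon} Q Z L^{2 - j} N' (1 + Z/p);
\end{equation*}
substituting $N' \ll L^{j}/d^2$ from \eqref{eq:M'N'over1plusPbound}, the unified bound \eqref{eq:ZboundUnified} for $Z$, and the constraint $Q \ll C/d = \sqrt{N L^{j}}/d$, a routine case split on whether $Z/p \gg 1$ or $Z/p \ll 1$ produces the claimed bound \eqref{eq:T0'bound}.

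For the contribution to $S(N, \mathcal{A}_j)$, I would insert \eqref{eq:T0'bound} into \eqref{eq:SboundInTermsofT}. Taking square roots, the product $\sqrt{T_0} \cdot \sqrt{N}/(Q \sqrt{L^{j}})$ simplifies to $N^{1/4} p^{1/2} L^{1/2 + 3j/4}/d^{2}$, which is independent of $Q$; the dyadic sums over $Q$ and $d$ then only contribute an additional $p^{\varepsilon}$, producing the asserted bound $p^{\varepsilon} N^{1/4} p^{1/2} L^{1/2 + 3j/4}$.

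The main subtlety is the counting of admissible tuples without invoking the Deligne bound: the symmetric Cauchy--Schwarz split above is designed so that the two second-moment inputs \eqref{RS} and \eqref{bjl2bound} absorb the counting factor $L(1 + Z/p)$ in a balanced way. Apart from this, the argument is a careful bookkeeping exercise tracking the various size regimes of $Z$ relative to $p$.
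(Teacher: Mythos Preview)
Your proposal is correct and follows essentially the same route as the paper. Both arguments specialize Lemmas~\ref{lemma:ShatEvaluation} and~\ref{lemma:IhatProperties} at $m=0$, use $|S(D,0;c)| \le (D,c)$ together with \eqref{eq:gcdsum}, and then count tuples subject to $1 \le |D| \ll L^j Z/p$ after reducing to second moments via \eqref{RS} and \eqref{bjl2bound}. The only cosmetic difference is the grouping in the $|ab| \le |a|^2 + |b|^2$ step: the paper pairs $b_j(\ell_1)$ with $\lambda_g(n_2)$ (cross-pairing), whereas you pair $b_j(\ell_1)$ with $\lambda_g(n_1)$; by the symmetry of the constraint these lead to the same count $\ll L(1+Z/p)$. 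Your explicit case split on $Z/p$ is in fact slightly more careful than the paper's terse intermediate bound $p^{-1+\varepsilon} L N' Q Z^2$, but both routes land on \eqref{eq:T0'bound} after invoking \eqref{eq:M'N'over1plusPbound}, \eqref{eq:ZboundUnified}, and $Q \ll \sqrt{NL^j}/d$.
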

\begin{proof}

Using \eqref{eq:IhatSupportTrivial}, \eqref{eq:IhatInert}, and \eqref{eq:Shat0Evaluation}, together with
 $|S(n,0;c)| \leq (n,c)$, we obtain
\begin{equation}
    V_0 \ll \frac{Z }{\ell_2} \delta(c_1 = c_2)
\cdot
    (n_1 \ell_2 - n_2 \ell_1, c_1) 
    \cdot 
    \delta\Big(1\leq |\ell_1 n_2 - \ell_2 n_1| \ll \frac{L^j Z}{p}\Big) + O(p^{-100}).
\end{equation}
Therefore we arrive at
\begin{align}
    T_0(N, \mathcal{A}_j)    
\ll 
\sum_{\substack{\ell_1, \ell_2 \in \mathcal{L}_j \\ n_1, n_2 \asymp N' \\ 
1 \leq |\ell_1 n_2 - \ell_2 n_1| \ll \frac{L^j Z}{p}}} 
\Big(|b_j(\ell_1)\lambda_g(n_2)|^2+|b_j(\ell_2)\lambda_g(n_1)|^2 \Big)
\sum_{c_1\asymp Q}(n_1\ell_2-n_2\ell_1, c_1)
\frac{Z}{\ell_2}.
\end{align}
Since we have by assumption $\ell_1 n_2 \neq \ell_2 n_1$, we can use \eqref{eq:gcdsum} to handle the sum over $c_1$. 
The contribution to $T_0(N, \mathcal A_j)$ from these terms is bounded by
$\ll p^{-1+\varepsilon} L N' Q Z^2$.
Using \eqref{eq:M'N'over1plusPbound} for $N'$, \eqref{eq:ZboundUnified} for $Z$, and
$Q \ll \frac{\sqrt{N L^j}}{d}$ we obtain \eqref{eq:T0'bound}.
The contribution of \eqref{eq:T0'bound} to $S''(N, \mathcal{A}_j)$ is then bounded by
\begin{equation*}
    \sum_d 
    p^{\varepsilon} \frac{\sqrt{N}}{Q \sqrt{L^j}}
    \Big(\frac{Q^2 L^{1+\frac{5j}{2}} p}{d^4 \sqrt{N}}\Big)^{1/2}
    \ll p^{\varepsilon} N^{1/4} p^{1/2} L^{\frac12 + \frac{3j}{4}}.  \qedhere
\end{equation*}
 \end{proof}

\subsection{Non-zero frequency}
We consider the contribution from $m\not=0$. 
Recall that
\begin{align}
  T_{\neq 0} =  
   \sum_{\substack{\ell_1, \ell_2 \in \mathcal{L}_j \\ n_1, n_2 \asymp N' \\ 1\leq  |\ell_2 n_1 - \ell_1 n_2| \ll \frac{L^j Z}{p}}}
\sum_{\substack{(c_1, \ell_1) = 1 \\ (c_2, \ell_2) = 1 \\ c_1 \asymp c_2 \asymp Q}} 
b_j(\ell_1)\overline{b_j(\ell_2)}\chi(c_1) \overline{\chi}(c_2)
\lambda_{g}(n_1) \lambda_{g}(n_2)
 \sum_{0 < |m| \ll \frac{L^j Q^2p^\varepsilon}{Z}} \widehat{S}(m) \widehat{I}(m)
\end{align}
up to a very small error term,
and that $\widehat{S}(m)$ is given by Lemma \ref{lemma:ShatEvaluation}  (and alternatively, Lemma \ref{lemma:ShatEvaluation2}). Now, further decompose this as
\begin{equation}
    T_{\neq 0}
    = \underbrace{T_{\neq 0}''}_{\ell_1 \neq \ell_2}  +\underbrace{T_{\neq 0}^{\text{diag}}}_{\ell_1 = \ell_2}
     + O(p^{-100}).
\end{equation}

We shall focus on $T_{\neq 0}''$ since it is the harder case.  In this case, we use \eqref{eq:ShatmEvaluationelliDistinct} to evaluate $\widehat{S}(m)$, and \eqref{eq:IhatInert} for the evaluation of $\widehat{I}(m)$.  Note the cancellation of the factor $\alpha_{\infty}$ in these two terms.  Hence
\begin{align}
\label{eq:Tneq0''formula}
   T_{\neq 0}'' =&\  
  \sum_{\substack{\ell_1 \neq \ell_2 \in \mathcal{L}_j \\ n_1, n_2 \asymp N' \\ D=\ell_2 n_1 - \ell_1 n_2\neq0 \\ |D| \ll \frac{L^j Z}{p}}}
\sum_{\substack{(c_1, \ell_1) = 1 \\ (c_2, \ell_2) = 1 \\ c_1 \asymp c_2 \asymp Q}}
b_j(\ell_1)\overline{b_j(\ell_2)}\chi(c_1) \overline{\chi}(c_2)
\lambda_{g}(n_1) \lambda_{g}(n_2)
\frac{Z}{\ell_2}
\nonumber\\
&\ \times \sum_{\substack{c_{10} c_1' = c_1 \\ c_{20} c_2' = c_2 \\ (\dots)}}
\sum_{0 < |m| \ll \frac{L^j Q^2p^\varepsilon}{Z}} 
\alpha_0(m)
    \cdot 
    e_{c_1' \ell_2}(\overline{c_{10} c_2 \ell_1} pm D)
    w(m,\cdot),
\end{align}
where the $(\dots)$ in the summation over $c_{10} c_1' = c_1$ etc. 
is shorthand for the conditions that $(c_{10} c_{20}, c_1' c_2') =1$ and that $c_{10}$ and $c_{20}$ share all the same prime factors.
We similarly record
\begin{align}
\label{eq:Tneq0diagFormula}
   T_{\neq 0}^{\text{diag}} = &\ 
  \sum_{\substack{\ell_1 = \ell_2 \in \mathcal{L}_j \\ n_1, n_2 \asymp N' \\ 1\leq  |n_1 - n_2| \ll \frac{Z}{p}}}
\sum_{\substack{(c_1, \ell_1) = 1 \\ (c_2, \ell_2) = 1 \\ c_1 \asymp c_2 \asymp Q}}
|b_j(\ell_1)|^2\chi(c_1) \overline{\chi}(c_2)
\lambda_{g}(n_1) \lambda_{g}(n_2)
\frac{Z}{\ell_2}
\sum_{\substack{c_{10} c_1' = c_1 \\ c_{20} c_2' = c_2 \\ (\dots)}}\nonumber
\\
& \ \times \sum_{0 < |m| \ll \frac{L^j Q^2p^\varepsilon}{Z}} 
\alpha_0(m)
    \cdot 
    e_{\ell_2}(-\overline{c_{1} c_{2} } p n_2 m)
     \cdot
    e_{c_1'}(\overline{c_{10} c_2 \ell_1} pm (n_1-n_2))
    w(m,\cdot).
\end{align}

\subsection{Separation of variables}
The next step is to separate the variables $c_1'$ and $c_2'$ inside $\alpha_0(m)$, which was defined in \eqref{eq:alphac0part}.  Note that the expression \eqref{eq:alphac0part}, as a function of $c_1'$ and $c_2'$, depends only on the product $c_1' c_2'$.  Therefore, by Fourier/Mellin decomposition, we have
\begin{equation}
\label{eq:alphac10c20Fourier}
    \alpha_0(c_1'c_2', m, \cdot)
    = 
    \sum_{\psi \shortmod{c_{10} c_{20}}} \widehat{\alpha}(\psi,m, \cdot) \psi(c_1' c_2'),
\end{equation}
where
\begin{align}
    \widehat{\alpha}(\psi,m,\cdot)
    = &\ \frac{1}{\varphi(c_{10} c_{20})}
    \sum_{x \shortmod{c_{10} c_{20}}} \overline{\psi(x)}
    e_{c_{10} c_{20}}(-\overline{x \ell_2} p n_2 m)
 \nonumber\\
 &\ \times \mathop{\sumstar_{t_1 \shortmod{c_{10}}}\quad \sumstar_{t_2 \shortmod{c_{20}}}}_{c_{20} t_1  + c_{10} t_2   \equiv m \shortmod{c_{10} c_{20}}} 
        e_{c_{10}}(\overline{x \ell_1} t_1 p n_1) e_{c_{20}}(\overline{x \ell_2} t_2 pn_2).
\end{align}
Here, $\widehat{\alpha}(\psi, \cdot)$ depends on $\psi$, $c_{10}$, $c_{20}$, $\ell_1$, $\ell_2$, $m$, $p$, $n_1$, $n_2$.

\begin{lemma}
\label{lemma:alphaHatFourierTransformL1bound}
Let $D = \ell_2 n_1 - \ell_1 n_2$. We have $\hat{\alpha}(\psi, m)$ is nonvanishing unless $(c_{10}, c_{20})\mid m$ in which case write $m=(c_{10},c_{20})m_0'm'$ where $(m', c_{10}c_{20})=1$ and $ m_0'\mid (c_{10}c_{20})^\infty$. 
Then
\begin{equation}\label{factorhatpsi}
\widehat{\alpha}(\psi,m, \cdot)=\overline{\psi}\big(m'\big)\widehat{\alpha}(\psi,m_0', \cdot)
\end{equation}
and
    \begin{equation}
    \label{eq:alphaHatFourierTransformL1bound}
       \|\widehat{\alpha}(\psi, m_0')\|_1=\sum_{\psi}|\widehat{\alpha}(\psi, m_0', \cdot)| \leq p^{\varepsilon}(D, c_{10}, c_{20})
        \prod_{q: v_q(c_{10}) = v_q(c_{20})} q^{1/2}.    \end{equation}
\end{lemma}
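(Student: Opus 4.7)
The plan is to establish the three claims in turn, exploiting the multiplicative structure of $\widehat{\alpha}$ via the Chinese Remainder Theorem.

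For the support statement, I would reduce the defining congruence $c_{20}t_1+c_{10}t_2\equiv m\pmod{c_{10}c_{20}}$ modulo $(c_{10},c_{20})$: both terms on the left vanish, forcing $(c_{10},c_{20})\mid m$. For the factorization \eqref{factorhatpsi}, the idea is to change variables $x\mapsto m'x$, $t_1\mapsto m't_1$, $t_2\mapsto m't_2$, which is legitimate since $(m',c_{10}c_{20})=1$. The $t_i$-exponentials are unaffected because the inserted $m'$ is killed by the $\overline{m'}$ in $\overline{x\ell_i}$; the congruence transforms into $c_{20}t_1+c_{10}t_2\equiv m\overline{m'}\equiv (c_{10},c_{20})m_0'\pmod{c_{10}c_{20}}$, and likewise the phase $e_{c_{10}c_{20}}(-\overline{x\ell_2}pn_2 m)$ becomes $e_{c_{10}c_{20}}(-\overline{x\ell_2}pn_2(c_{10},c_{20})m_0')$. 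Only the character $\overline{\psi(x)}$ produces a nontrivial factor $\overline{\psi}(m')$, giving the desired formula.

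For the $L^1$ bound, I would write $c_{10}c_{20}=\prod_q q^{v_q(c_{10})+v_q(c_{20})}$ and factor $\psi=\prod_q\psi_q$ via CRT. Each piece of $\widehat{\alpha}(\psi,m_0',\cdot)$ splits as a local product $\widehat{\alpha}_q(\psi_q,m_0',\cdot)$, and the $L^1$ norm factors:
\begin{equation*}
\sum_{\psi \shortmod{c_{10}c_{20}}}|\widehat{\alpha}(\psi,m_0',\cdot)|=\prod_{q\mid c_{10}c_{20}}\sum_{\psi_q}|\widehat{\alpha}_q(\psi_q,m_0',\cdot)|.
\end{equation*}
Then I would estimate each local $L^1$ sum according to whether $v_q(c_{10})$ and $v_q(c_{20})$ agree. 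In the unequal case, say $v_q(c_{10})<v_q(c_{20})$, the local congruence $q^{v_q(c_{20})-v_q(c_{10})}t_1+t_2\equiv m/q^{v_q(c_{10})}\pmod{q^{v_q(c_{20})}}$ forces $t_2$ as a function of $t_1$, and the resulting sum over $x$ (with the character $\overline{\psi_q}$) collapses to a bounded count, giving $\ll q^\varepsilon$ after summing over $\psi_q$. In the equal case $v_q(c_{10})=v_q(c_{20})=\alpha$, the constraint becomes $t_1+t_2\equiv m_q/q^\alpha\pmod{q^\alpha}$, so $t_1$ runs freely; combining the two $t_i$-exponentials produces a phase whose numerator is proportional to $D=\ell_2n_1-\ell_1n_2$, and summing over $x$ yields a Kloosterman- or Ramanujan-type structure with modulus a power of $q$. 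Standard bounds then contribute $q^{1/2}(D,q^\alpha)$ to the local $L^1$ norm.

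Multiplying the local bounds over all primes of $c_{10}c_{20}$, the unequal-valuation primes contribute $p^\varepsilon$, while the equal-valuation primes contribute $\prod_{v_q(c_{10})=v_q(c_{20})}q^{1/2}(D,q^{v_q(c_{10})})\le p^\varepsilon(D,c_{10},c_{20})\prod_{v_q(c_{10})=v_q(c_{20})}q^{1/2}$, which is \eqref{eq:alphaHatFourierTransformL1bound}. \textbf{The main obstacle} I anticipate is the equal-valuation case: there one must carefully track how the combination of the two $t_i$-exponentials and the $-\overline{x\ell_2}pn_2m$ phase rearranges, after the $t_1$ summation, into a clean character/exponential sum in $x$ whose dependence on $D$ is transparent; this is the step where the GCD factor $(D,c_{10},c_{20})$ must be extracted without losing the square-root saving in $\psi_q$.
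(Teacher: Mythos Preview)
Your plan is essentially the paper's: CRT factorization, then a local case split on whether $v_q(c_{10})=v_q(c_{20})$. Your global change of variables $x\mapsto m'x$, $t_i\mapsto m't_i$ for \eqref{factorhatpsi} is correct and in fact cleaner than the paper, which establishes this identity locally inside the case analysis.

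There is one concrete error. In the unequal-valuation case you claim the local $L^1$ sum is $\ll q^{\varepsilon}$, but it is actually $(D,q^{\min(v_q(c_{10}),v_q(c_{20}))})$, which can be large. The point you flag as the ``main obstacle'' for the equal case is exactly what is needed here too: after substituting $t_2 = m_0'm' - \tfrac{q_{20}}{q_{10}}t_1$, the phase $e_{q_{20}}(\overline{x\ell_2}t_2pn_2)$ combines with $e_{q_{10}q_{20}}(-\overline{x\ell_2}pn_2m)$ (recalling $m=q_{10}m_0'm'$) to leave only $e_{q_{10}}(-\overline{x\ell_2}t_1pn_2)$, which together with the $t_1$-phase $e_{q_{10}}(\overline{x\ell_1}t_1pn_1)$ yields $e_{q_{10}}(\overline{x\ell_1\ell_2}t_1pD)$. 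The $t_1$-sum is then $S(D,0;q_{10})$, independent of $x$; hence only the trivial $\psi_q$ contributes, with value bounded by $(D,q_{10})$. Without this cancellation the $x$-sum does not ``collapse'' at all.

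Your final inequality happens to survive this error, since the unequal-prime factors $(D,q^{\min(v_q(c_{10}),v_q(c_{20}))})$ are exactly the missing local pieces of $(D,c_{10},c_{20})$; together with the equal-prime contributions $(D,q^{v_q(c_{10})})q^{1/2}$ they multiply to $(D,c_{10},c_{20})\prod_{v_q(c_{10})=v_q(c_{20})}q^{1/2}$. For the equal-valuation case itself, the paper proceeds after the same phase combination by the change $x\mapsto t_1x$, which separates the $t_1$-sum (a character sum forcing $\mathrm{cond}(\psi_q)\le q$) from an $x$-sum that is either a Ramanujan sum or a Gauss sum of conductor $q$; this is where the $q^{1/2}$ and the extra $(D,q_0)$ factor are read off.
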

\begin{proof}
From Lemma \ref{lemma:ShatEvaluation}, we have that $\widehat{\alpha}(\psi)$ vanishes unless $(c_{10}, c_{20})$ divides $m$, in which case we can write $m=m_0m'$ where $(m', c_{10}c_{20})=1$ and $(c_{10}, c_{20})\mid m_0\mid (c_{10}c_{20})^\infty$. 
After writing $\psi$ as a product of characters with prime powers moduli, we see that the sum to be bounded factors over prime  powers by the Chinese remainder theorem, as does the claimed bound, so it suffices to show it one prime at a time. Let $q$ be a prime dividing $c_{10}$ and without loss of generality, suppose that $q_{10}:=\nu_q(c_{10})\leq \nu_q(c_{20})=:q_{20}$.  Then from Lemma \ref{lemma:ShatEvaluation}, we have that $\widehat{\alpha}(\psi)$ vanishes unless $q_{10}$ divides $m_0$. Note: $m_0'=m_0/q_{10}$.   

Observe that the congruence in the sum over $t_1$ and $t_2$ is equivalent to $\frac{q_{20}}{q_{10}} t_1 + t_2 \equiv m_0'm' \pmod{q_{20}}$.  This determines $t_2$ uniquely, however one must retain the condition $(t_2, q) = 1$ by $(m_0'm' - \frac{q_{20}}{q_{10}} t_1, q) = 1$.  Applying the above observations, and simplifying the sum, we obtain
    \begin{equation}
    \widehat{\alpha}(\psi,m, \cdot)
    = \frac{1}{\varphi(q_{10}q_{20})}
    \sum_{x \shortmod{q_{10}q_{20}}} \overline{\psi(x)}
 \sumstar_{\substack{t_1 \shortmod{q_{10}} \\ (m_0'm' - \frac{q_{20}}{q_{10}} t_1, q) = 1} } 
 e_{q_{10}}(\overline{x \ell_1 \ell_2} t_1 p D).
\end{equation}

Suppose that $q_{10}<q_{20}$.  In this case, the condition $(m_0'm' - \frac{q_{20}}{q_{10}} t_1, q) = 1$ is equivalent to $(m_0', q) = 1$, that is $m_0'=1$.  The inner sum over $t_1$ above then evaluates as $S(D, 0;q_{10})$, which is independent of $x$.  The sum over $x$ then vanishes unless $\psi$ is trivial.  Hence in this case,
\begin{equation}
    \widehat{\alpha}(\psi,m,\cdot) =
    \delta_{\mathrm{cond}(\psi) = 1} S(D,0;q_{10})=\overline{\psi}(m')\widehat{\alpha}(\psi,1, \cdot)=\overline{\psi}(m')\widehat{\alpha}(\psi,m_0, \cdot),
\end{equation}
and it is easy to see that \eqref{eq:alphaHatFourierTransformL1bound} holds using $|S(D,0;c)| \leq (D,c)$.

Now, suppose that $q_{10} = q_{20} =: q_0$, say. In this case, we change variables $x \rightarrow t_1 x$ to obtain
 \begin{equation}
    \widehat{\alpha}(\psi,m, \cdot)
    = \frac{1}{\varphi(q_{0}^2)}
    \sum_{x \shortmod{q_{0}^2}} \overline{\psi(x)}
    e_{q_{0}}(\overline{x \ell_1 \ell_2}  p D)
 \sumstar_{\substack{t_1 \shortmod{q_{0}} \\ (m_0'm' - t_1, q) = 1} } 
 \overline{\psi}(t_1)
.
\end{equation}
The inner sum over $t_1$ then takes the form
\begin{equation}
    \sumstar_{\substack{t_1 \shortmod{q_{0}} \\ (m_0'm' - t_1, q) = 1} } 
 \overline{\psi}(t_1)
 = \sum_{t_1 \shortmod{q_0}}
 \overline{\psi}(t_1)
 - \sum_{r \shortmod{q_0/q}} \overline{\psi}(m_0'm' + qr).
\end{equation}
The first sum on the right hand side evaluates as $\varphi(q_0) \delta_{\mathrm{cond}(\psi)=1}=\varphi(q_0)\delta_{\mathrm{cond}(\psi)=1}\overline{\psi}(m')$.  
The second sum on the right hand side evaluates as
\begin{equation}
    \frac{q_0}{q} \overline{\psi}(m') \overline{\psi}(m_0')\delta_{\mathrm{cond}(\psi) \leq q}.
\end{equation}
Therefore, $\widehat{\alpha}(\psi, m,\cdot)=\overline{\psi}(m')\widehat{\alpha}(\psi, m_0',\cdot)$ and
\begin{equation}
\widehat{\alpha}(\psi, m_0',\cdot)=\frac{1}{\varphi(q_0)}\sum_{x\bmod {q_0}}\overline{\psi(x)} e_{q_0}(\overline{x\ell_1\ell_2}pD) \Big(\varphi(q_0) \delta_{\mathrm{cond}(\psi)=1} -\frac{q_0}{q}\overline{\psi}(m_0')\delta_{\text{cond}(\psi)\leq q}\Big).
\end{equation}
If $\mathrm{cond}(\psi)=1$, the sum over $x$ is a Ramanujan sum, and we have
\begin{equation}
    \widehat{\alpha}(\psi_0, m_0', \cdot) = S(D, 0;q_0) (1 - \frac{\overline{\psi}(m_0')}{q-1}).
\end{equation}
If $\mathrm{cond}(\psi) = q$, then we have
\begin{equation}
    \widehat{\alpha}(\psi, m_0', \cdot) = - \frac{\overline{\psi}(m_0')}{q-1}  
    \sum_{x \shortmod{q_{0}}} \psi(x)
    e_{q_{0}}(x\overline{ \ell_1 \ell_2}  p D).
\end{equation}
The sum over $x$ vanishes unless $\frac{q_0}{q}$ exactly divides $D$, in which case
\begin{equation}
    \widehat{\alpha}(\psi, m_0', \cdot) = - \frac{\psi(\ell_1 \ell_2) \overline{\psi}(p  \frac{D}{q_0/q})\overline{\psi}(m_0') }{q-1} \frac{q_0}{q}
    \tau(\psi),
\end{equation}
whence $|\widehat{\alpha}(\psi)| \ll q^{-1/2} (D, q_0)$ since $(D, q_0) = \frac{q_0}{q}$ in this case.  The claimed bound follows easily after suming over $\psi$.
\end{proof}

To treat $T_{\neq 0}^{\text{diag}}$ we will also need to separate variables in $e_{\ell_2}(- \overline{c_1 c_2} p n_2 m)$, which appears in \eqref{eq:Tneq0diagFormula}.
\begin{lemma}
\label{lemma:exponentialmodulusell2SeparationofVariables}
    Suppose $(\ell_2, n_2 m) = 1$.  Then
    \begin{equation}
        e_{\ell_2}(- \overline{c_1 c_2} p n_2 m)
        = \frac{1}{\varphi(\ell_2)} \sum_{\eta \shortmod{\ell_2}} \tau(\overline{\eta}) \eta(-\overline{c_1 c_2} p n_2 m ).
    \end{equation}
\end{lemma}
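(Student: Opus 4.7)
My plan is to derive this as a direct consequence of multiplicative Fourier inversion on the group $(\mathbb{Z}/\ell_2\mathbb{Z})^{*}$. The abstract identity I will invoke is
\begin{equation*}
e_{\ell_2}(y) \, = \, \frac{1}{\varphi(\ell_2)}\sum_{\eta \shortmod{\ell_2}} \tau(\overline{\eta}) \, \eta(y), \qquad (y,\ell_2)=1,
\end{equation*}
where $\tau(\chi) := \sum_{a \shortmod{\ell_2}} \chi(a) e_{\ell_2}(a)$ denotes the Gauss sum. The lemma then follows by substitution with $y = -\overline{c_1 c_2}\, p\, n_2 \, m$.

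First, I would check that $y$ is a unit modulo $\ell_2$. The hypothesis $(\ell_2, n_2 m)=1$ handles the factor $n_2 m$. The remaining factors are units modulo $\ell_2$ for contextual reasons: the standing constraints $(c_1,\ell_1)=1$ and $(c_2,\ell_2)=1$ from \eqref{eq:TNAdefLate} combine with $\ell_1=\ell_2$ (the diagonal case) to yield $(c_1 c_2, \ell_2)=1$, while $\ell_2 = \nu^j$ for a prime $\nu\leq L < p$ forces $(\ell_2,p)=1$.

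Next, I would verify the abstract identity by a standard orthogonality computation:
\begin{equation*}
\frac{1}{\varphi(\ell_2)}\sum_{\eta \shortmod{\ell_2}} \tau(\overline{\eta})\,\eta(y)
\, = \, \frac{1}{\varphi(\ell_2)}\sum_{\substack{a \shortmod{\ell_2} \\ (a,\ell_2)=1}} e_{\ell_2}(a) \sum_{\eta \shortmod{\ell_2}} \overline{\eta}(a)\,\eta(y)
\, = \, \sum_{\substack{a \shortmod{\ell_2} \\ a\equiv y}} e_{\ell_2}(a) \, = \, e_{\ell_2}(y),
\end{equation*}
where in the penultimate step I have used $\sum_{\eta} \overline{\eta}(a)\,\eta(y) = \varphi(\ell_2)\,\delta_{a\equiv y}$ for units $a, y$ modulo $\ell_2$. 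Feeding in $y=-\overline{c_1 c_2}\,p\,n_2\,m$ completes the proof.

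There is essentially no obstacle here; this is a routine application of multiplicative Fourier analysis and is being recorded as a lemma only because it will be invoked repeatedly in the subsequent treatment of $T_{\neq 0}^{\text{diag}}$, where the variable $c_1 c_2$ inside $e_{\ell_2}(-\overline{c_1 c_2}\,p\,n_2\,m)$ must be disentangled before bilinear-form bounds can be applied.
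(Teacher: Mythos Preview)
Your proof is correct. The paper states this lemma without proof, treating it as a standard consequence of orthogonality of characters; your argument via multiplicative Fourier inversion on $(\mathbb{Z}/\ell_2\mathbb{Z})^{*}$ is precisely the intended justification.
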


Next we briefly discuss the
 cases with $(\ell_2, nm) \neq 1$.
If $\ell_2$ divides $n_2 m$ then the exponential factor is identically $1$, and the variables are already separated here.  If $\ell_2 = \nu^2$ with $\nu$ prime and $\nu$ exactly divides $n_2 m$, then we apply a similar formula with modulus $\nu$ instead of $\nu^2$, which makes the forthcoming estimations even better.

We also have an Archimedean separation of variables, which is also standard.  
For instance, by Fourier analysis, we have
\begin{equation}
\label{eq:archimedeanseparationofvariables}
    w(\cdot) = w(c_1, c_2, m, \cdot)
    = \int_{\mr^3} \widehat{w}(x_1, x_2, y, \cdot) e(c_1 x_1 + c_2 x_2 + ym) dx_1 dx_2 dy,
\end{equation}
where $\int_{\mr^3} |\widehat{w}(x_1, x_2, y, \cdot)| dx_1 dx_2 dy \ll p^{\varepsilon}$.  This separates the variables $c_1, c_2$, and $m$ inside $w$ at essentially no cost.
For this estimate, we used that $w$ is essentially $p^{\varepsilon}$-inert in all variables.

\subsection{Applying trilinear sums of Kloosterman fractions}
\begin{lemma}
\label{lemma:Tneq0''bound}
    For $j=1,2$, and $\sigma = \frac{1}{20}$, we have for $L\ll p^{1/6-\varepsilon}$
\begin{equation}
\label{eq:Tneq0''finalbound}
    T_{\neq 0}''  \ll d^{-2} p^{3\sigma+\varepsilon}L^{5j/2+2-5j\sigma/2}N^{-1/2-3\sigma/2}Q^{5-4\sigma},
\end{equation}
and its corresponding contribution to $S''(N, \mathcal{A}_j)$ via \eqref{eq:SboundInTermsofT} is 
\begin{equation}
    \ll p^{\varepsilon} N^{1-7\sigma/4} p^{3\sigma/2}L^{3j/2+1-9j\sigma/4}.
\end{equation}
\end{lemma}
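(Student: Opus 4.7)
The plan is to reduce the inner triple sum over $(m, c_1', c_2')$ in \eqref{eq:Tneq0''formula} to a trilinear sum of Kloosterman fractions to which Theorem \ref{thm:BC} applies, and then to estimate the outer sums in a straightforward way.

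First, separate the variables $c_1', c_2', m'$ inside the arithmetic factor $\alpha_0(m)$. Writing $m = (c_{10}, c_{20})\,m_0'\,m'$ with $(m', c_{10}c_{20}) = 1$ and $m_0' \mid (c_{10}c_{20})^\infty$, the Fourier decomposition \eqref{eq:alphac10c20Fourier} combined with the factorisation \eqref{factorhatpsi} gives $\alpha_0(m) = \sum_\psi \widehat\alpha(\psi, m_0', \cdot)\,\overline\psi(m')\,\psi(c_1'c_2')$. The same three variables are separated inside the inert weight $w(m, \cdot)$ by the Archimedean separation \eqref{eq:archimedeanseparationofvariables}, at negligible cost. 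The sum over $\psi$ will ultimately be absorbed by the $L^1$-bound \eqref{eq:alphaHatFourierTransformL1bound}.

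Next, bring the oscillatory phase $e_{c_1'\ell_2}(\overline{c_{10}c_2\ell_1}\,pmD)$ into a form in which the numerator is a genuine integer. Substituting $c_2 = c_{20}c_2'$ and applying additive reciprocity (as in the sketch of Section \ref{section:sketch}) converts the phase into $e_{c_{10}c_{20}c_2'\ell_1}\big(\pm\, pD\,m\,\overline{c_1'\ell_2}\big)$ together with a secondary factor $e_{c_1'\ell_2\,c_{10}c_{20}c_2'\ell_1}(pmD)$ whose argument is $\ll 1$ by the truncations in Section \ref{section:FunctionalEquations}, and is therefore absorbable into the inert weight. Now apply Theorem \ref{thm:BC} to the resulting trilinear sum in $(c_1', c_2', m')$, with $M = c_1'\ell_2 \asymp QL^j/c_{10}$ (supported on multiples of $\ell_2$), $N = c_{10}c_{20}c_2'\ell_1 \asymp c_{10}QL^j$ (supported on multiples of $c_{10}c_{20}\ell_1$), $K = m' \ll L^j Q^2/Z$, and $|a| \ll |D|\,p \ll L^j Z$ via \eqref{eq:IhatSupportTrivial}. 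Generically $(M,N)=1$ and the prefactor $(1+|a|K/(MN))^{1/2}$ is $O(1)$; under the hypothesis $L \ll p^{1/6-\varepsilon}$, the second term of the Bettin--Chandee bound dominates. Combining with \eqref{RS} and \eqref{bjl2bound} for $\lambda_g$ and $b_j$, summing $n_1, n_2$ subject to $|D|\ll L^jZ/p$ and $\ell_1, \ell_2, c_{10}, c_{20}$ crudely, yields \eqref{eq:Tneq0''finalbound}. Feeding this into \eqref{eq:SO'NamplifiedBoundviaTO'NamplifiedSimplified}, invoking $Q \ll \sqrt{NL^j}/d$, and summing dyadically in $d, Q, M', N'$ yields the claimed bound on the contribution to $S''(N, \mathcal{A}_j)$.

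The main obstacle is the careful bookkeeping of non-generic cases and the matching of exponents. The cases $c_{10}, c_{20} > 1$ require the factor $(D, c_{10}, c_{20})\prod_{q:\,v_q(c_{10}) = v_q(c_{20})} q^{1/2}$ in \eqref{eq:alphaHatFourierTransformL1bound} to be absorbed without sacrificing the main savings, and the coprimality conditions needed for reciprocity and for Theorem \ref{thm:BC} must be enforced term-by-term, typically by inclusion--exclusion. The choice $\sigma = 1/20$ is dictated by balancing the savings from the second term of Theorem \ref{thm:BC} against the length of the $(n_1, n_2)$-sum constrained by $|D|\ll L^jZ/p$: any larger $\sigma$ would allow the first term of the Bettin--Chandee bound to dominate and destroy the savings.
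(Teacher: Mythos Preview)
Your overall strategy matches the paper's: Fourier-decompose $\alpha_0$ via \eqref{eq:alphac10c20Fourier}--\eqref{factorhatpsi}, separate variables archimedeanly, apply Theorem \ref{thm:BC} to the trilinear form in $(m',c_1',c_2')$, then control the outer sums with \eqref{eq:alphaHatFourierTransformL1bound}, \eqref{RS}, \eqref{bjl2bound}, and the constraint $|D|\ll L^jZ/p$. However, there is a genuine error in the identification of which term of Theorem \ref{thm:BC} is relevant and in the provenance of $\sigma=1/20$.

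You assert that ``the second term of the Bettin--Chandee bound dominates'' and that ``$\sigma=1/20$ is dictated by balancing the savings from the second term \dots\ against the length of the $(n_1,n_2)$-sum''. Both claims are incorrect. In the paper's setup one has $M\asymp QL^j/c_{10}$, $N\asymp QL^j c_{10}'$, and $K\ll L^jQ^2/Z$; working out the two terms of Theorem \ref{thm:BC} (after extracting the common factor $QK$) yields
\[
K^{-3/20}(QL^j)^{19/20} \;+\; (QL^j)^{7/8}.
\]
The exponent $\sigma=1/20$ is \emph{not} a balancing parameter but simply a bookkeeping name for the savings produced by the \emph{first} term $(KMN)^{7/20}(M+N)^{1/4}$, namely $K^{-3\sigma}(QL^j)^{-\sigma}$ relative to the trivial count. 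The hypothesis $L\ll p^{1/6-\varepsilon}$ is used to verify that this first term dominates the second (equivalently $K^2\ll QL^j$), not the other way round. If you retained only the second term you would lose the $K$-saving entirely and the resulting bound would not reproduce \eqref{eq:Tneq0''finalbound}.

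Two minor points. First, the extra reciprocity step you propose is unnecessary: the phase in \eqref{eq:Tneq0''formula} is already $e_{c_1'\ell_2}(\overline{c_{10}c_2\ell_1}\,pmD)$, so one may take $q_1=c_1'\ell_2$ and $q_2=c_0c_{10}'c_{20}'\ell_1 c_2'$ directly; the condition $(q_1,q_2)=1$ follows from the standing coprimality assumptions, and no inclusion--exclusion is needed. (Your reciprocity merely swaps $q_1$ and $q_2$, which is harmless by the symmetry of Theorem \ref{thm:BC}.) Second, the non-generic $c_{10},c_{20}$ are handled in the paper not by inclusion--exclusion but by the explicit decay $c_0^{-7/10}(c_0^2 c_{20}'\sqrt{c_{10}'})^{-1/2}$ coming from the norms \eqref{abgbound} together with the $L^1$-bound \eqref{eq:alphaHatFourierTransformL1bound}; the resulting sums over $c_0,c_{10}',c_{20}',m_0'$ converge.
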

\begin{proof}
We pick up from \eqref{eq:Tneq0''formula} and apply \eqref{eq:alphac10c20Fourier}, \eqref{factorhatpsi}, and \eqref{eq:archimedeanseparationofvariables}. Let $K=\frac{L^jQ^2p^{\varepsilon}}{Z}$ and write $c_{0}=(c_{10}, c_{20})$ and $m=c_0 m_0'm'$ where $m_0'\mid c_0^\infty$ and $(m', c_0)=1$ and write $c_{10}=c_{0}c_{10}'$,  $c_{20}=c_{0}c_{20}'$.
Using the notation $c_{10} \sim c_{20}$ to denote that these integers share all the same prime factors,
we write 
\begin{align}
   T_{\neq 0}'' =
  \int_{\mr^3} 
  \sum_{\substack{\ell_1 \neq \ell_2 \in \mathcal{L}_j \\ n_1, n_2 \asymp N' \\ D=\ell_2 n_1-\ell_1 n_2\neq0 \\ |D| \ll \frac{L^j Z}{p}}}
\sum_{\substack{(c_{10}, \ell_1) = 1 \\ (c_{20}, \ell_2) = 1 \\ c_{10}, c_{20} \ll Q \\ c_{10} \sim c_{20} }}\sum_{\substack{m_0'\mid c_{0}^\infty\\ m_0'\ll \frac{K}{c_0}}}
b_j(\ell_1)\overline{b_j(\ell_2)}\chi(c_{10}) &\overline{\chi}(c_{20})
\lambda_{g}(n_1) \lambda_{g}(n_2)
\frac{Z}{\ell_2}\\
&\times\sum_{\substack{\psi}}  
\widehat{w}(\cdot)
\widehat{\alpha}(\psi, m_0',\cdot)U(\cdot),
\label{eq:Tneq0''formula2}
\end{align}
where $U= U(\cdot)$ depends on all outer variables is given by
\begin{align}
    U(\cdot) =  \sum_{\substack{1\leq |m'|\ll K/c_0m_0'\\ (c_0, m')=1}}\sum_{\substack{(c_1', c_2') = 1 \\ (c_1', \ell_1) = (c_2', \ell_2) = 1 \\ c_1' \asymp \frac{Q}{c_{10}}, \,c_2' \asymp \frac{Q}{c_{20}}}} 
 \chi \psi(c_1') \overline{\chi} \psi(c_2') \overline{\psi}{(m')})   
     & e_{c_1' \ell_2}(\overline{c_0c_{10}' c_{20}'  \ell_1c_2'} pm_0'm' D)
   \\ & \times e(x_1 c_{10} c_1') e(x_2 c_{20} c_2') e(yc_0m_0'm').
\end{align} 
We are now ready to apply Theorem \ref{thm:BC}.  We view the outer variables $\ell_1$, $\ell_2$, $n_1$, $n_2$, $c_{10}$, $c_{20}, m_0'$ and $\psi$ as fixed. 
Change variables $q_1 = c_1' \ell_2$ and $q_2 = c_{0}c_{10}' c_{20}' \ell_1 c_2'$. Then we have  $(q_1, q_2) = 1$ since $(c_{10}c_1', \ell_1)=(c_{20}c_2', \ell_2)=(c_1', c_2')=1$ and $c_{10}'c_{20}'\mid c_0^\infty$.
Hence we can express $U$ as a trilinear form:
\begin{equation}
    U(\cdot) =\sum_{m'} \sum_{(q_1, q_2) = 1} 
    \alpha_{q_1} \beta_{q_2}\gamma_{m'}
    e_{q_1}(\overline{q_2} pm'D),
\end{equation}
where $q_1 \asymp \frac{Q \ell_2}{c_{0}c_{10}'}$, $q_2 \asymp Q \ell_1 c_{10}'$ and $1\leq |m'|\ll \frac{K}{c_0m_0'}$, and the coefficients $\alpha$, $\beta$ and $\gamma$ have their sup-norms bounded by $1$ and encode the various side conditions such as $q_1 \equiv 0 \pmod{\ell_2}$, $q_2 \equiv 0 \pmod{c_{0}c_{10}' c_{20}' \ell_1}$, and $(m', c_0c_1'c_2')=1$ from Lemma \ref{lemma:ShatEvaluation}.
Note that for fixed $c_{0}, c_{10}', c_{20}'$, we have 
\begin{equation}\label{abgbound}
\| \alpha \| \ll  \Big(\frac{Q }{c_{0}c_{10}'} \Big)^{1/2}
,
\quad
\| \beta \| \ll \Big(\frac{Q }{c_{0}c_{20}'} \Big)^{1/2}, 
\quad \|\gamma\|\ll  \Big(\frac{K}{c_0m_0'}\Big)^{1/2}.
\end{equation}
Also, note for simplification that the factor $\frac{|a| K}{MN}$ in Theorem \ref{thm:BC} here takes the form
\begin{equation}
    \frac{p \cdot |D| \cdot K/c_0 }{\frac{Q \ell_2}{c_{10}} \frac{Q \ell_1 c_{10}}{c_0}}
    \ll p^{\varepsilon} \frac{p \cdot \frac{L^j Z}{p} \cdot \frac{L^j Q^2}{Z}}{Q^2 L^{2j}}= p^{\varepsilon}.
\end{equation}
Furthermore, the factor $(MN)^{7/20} (M+N)^{1/4}$ in Theorem \ref{thm:BC} here takes the form
\begin{equation}
    \Big(\frac{Q L^j}{c_0 c_{10}'} \cdot Q L^j c_{10}'\Big)^{7/20}
    \Big(\frac{Q L^j}{c_0 c_{10}'} + Q L^j c_{10}'\Big)^{1/4}
    \ll \frac{(Q L^j)^{\frac{19}{20}} (c_{10}')^{1/4}}{c_0^{7/20}},
\end{equation}
and similarly the factor $(MN)^{3/8} (M+N)^{1/8}$ takes the form
\begin{equation}
    \Big(\frac{Q L^j}{c_0 c_{10}'} \cdot Q L^j c_{10}'\Big)^{3/8}
    \Big(\frac{Q L^j}{c_0 c_{10}'} + Q L^j c_{10}'\Big)^{1/8}
    \ll \frac{(Q L^j)^{\frac{7}{8}} (c_{10}')^{1/8}}{c_0^{3/8}}.
\end{equation}
Finally, we observe that since $Z \gg \frac{Q^2 p}{N}$, we have
$
    K \ll p^{\varepsilon} \frac{L^j Q^2}{Z} \ll \frac{L^j N}{p^{1-\varepsilon}}, 
$
and since $N \ll p^{1+\varepsilon}$, we deduce $K \ll L^j p^{\varepsilon}$.
Now Theorem \ref{thm:BC} implies
\begin{equation}
    U(\cdot)
    \ll \frac{p^{\varepsilon} QK^{1/2}}{(m_0'c_0^3 c_{10}'c_{20}')^{1/2}} 
    \Big( \big(\frac{K}{c_0m_0'}\big)^{7/20} \frac{(Q L^j)^{\frac{19}{20}} (c_{10}')^{1/4}}{c_0^{7/20}}
    + \big(\frac{K}{c_0m_0'}\big)^{1/2} \frac{(Q L^j)^{\frac{7}{8}} (c_{10}')^{1/8}}{c_0^{3/8}} \Big),
\end{equation}
which can be simplified using crude bounds in terms of the $c_0$-variables as
\begin{equation}
    U(\cdot)
    \ll \frac{p^{\varepsilon} QK^{1/2}}{m_0'^{17/20}c_0^{7/10} (c_0^3 c_{20}' \sqrt{c_{10}'})^{1/2}} 
    \Big( K^{7/20} (Q L^j)^{\frac{19}{20}}
    + K^{1/2} (Q L^j)^{\frac{7}{8}} \Big).
\end{equation}
Let $\sigma = 1/20$.  Then we have
\begin{equation}
    U(\cdot)
    \ll \frac{p^{\varepsilon} QK }{m_0'^{1-3\sigma}c_0^{7/10} (c_0^3 c_{20}' \sqrt{c_{10}'})^{1/2}} 
    \Big( K^{-3 \sigma} (Q L^j)^{1-\sigma}
    +  (Q L^j)^{\frac{7}{8}} \Big).
\end{equation}
Substitute this into \eqref{eq:Tneq0''formula2}.  Using Lemma \ref{lemma:alphaHatFourierTransformL1bound} and summing over $\psi$, we obtain
\begin{multline}
    T_{\neq 0}'' \ll
     \sum_{\substack{(c_{10}, \ell_1) = 1 \\ (c_{20}, \ell_2) = 1 \\ c_{10}, c_{20} \ll Q \\ c_{10} \sim c_{20}\\ c_0=(c_{10}, c_{20})}} 
     \sum_{\substack{\ell_1 \neq \ell_2 \in \mathcal{L}_j \\ n_1, n_2 \asymp N'\\ 1 \leq |\ell_2 n_1-\ell_1 n_2| \ll \frac{L^j Z}{p}}}
     \sum_{\substack{m_0'\mid c_0^\infty\\ m_0'\ll K/c_0}}
     (|b_j(\ell_1) \lambda_g(n_2)|^2 + |b_j(\ell_2) \lambda_g(n_1)|^2)
     \\  
     \times
     \frac{Z}{L^j}
    \|\widehat{\alpha}(\psi,m_0') \|_1
  \frac{p^{\varepsilon} QK }{m_0'^{1-3\sigma}c_0^{7/10} (c_0^3 c_{20}' \sqrt{c_{10}'})^{1/2}} 
    \Big( K^{-3 \sigma} (Q L^j)^{1-\sigma}
    +  (Q L^j)^{\frac{7}{8}} \Big),
\end{multline}
where  $c_0 = (c_{10}, c_{20})$.  Lemma \ref{lemma:alphaHatFourierTransformL1bound} implies
$\| \widehat{\alpha}(\psi,m_0') \|_1 \ll (D, c_0) c_0^{1/2}$. 
Note: by symmetry that both terms, with $|b_j(\ell_1) \lambda_g(n_2)|^2$, and $|b_j(\ell_2) \lambda_g(n_1)|^2$, give the same bound.

The sums over the $c$-variables and $m_0'$ can be estimated using using \eqref{eq:gcdsum} so that
\begin{equation}
    \sum_{\substack{(c_{10}, \ell_1) = 1 \\ (c_{20}, \ell_2) = 1 \\ c_{10}, c_{20} \ll Q \\ c_{10} \sim c_{20}}} \sum_{\substack{m_0'\mid c_0^\infty \\ m_0'\ll K/c_0}}
    \frac{(D, c_0)}{m_0'^{17/20}c_0^{7/10} (c_0^2 c_{20}' \sqrt{c_{10}'})^{1/2}} 
    \ll |D|^{\varepsilon} \sum_{c_0} c_0^{-17/10} 
    \sum_{m_0' c_{10}' c_{20}' \mid c_0^{\infty}} (c_{20}')^{-1/2} (c_{10}')^{-1/4}m_0'^{-17/20}
     \ll |D|^{\varepsilon}.
\end{equation}

Hence, we have
\begin{equation}
    T_{\neq 0}'' \ll \frac{Z}{L^j}
  p^{\varepsilon} QK
    \Big( K^{-3 \sigma} (Q L^j)^{1-\sigma}
    +  (Q L^j)^{\frac{7}{8}} \Big)
     \sum_{\substack{\ell_1 \neq \ell_2 \in \mathcal{L}_j \\ n_1, n_2 \asymp N'\\ 1 \leq |\ell_2 n_1-\ell_1 n_2| \ll \frac{L^j Z}{p}}}
     |b_j(\ell_1) \lambda_g(n_2)|^2
     .
\end{equation}
Using \eqref{RS} and \eqref{bjl2bound} and that the sum over $\ell_2$ and $n_1$ can be bounded   by
\begin{equation}
   \sum_{\ell_2 \in \mathcal{L}_j}
    \sum_{n_1 = \frac{\ell_2 n_1}{\ell_1} + O(\frac{L^j Z}{p \ell_1})} 1
    \ll \sum_{\ell_2 \in \mathcal{L}_j} \left(1 + \frac{L^j Z}{p \ell_1} \right)
    \ll L\left(1 + \frac{Z}{p}\right)
\end{equation}
uniformly in $\ell_1$ and $n_2$, we obtain 
\begin{equation}\label{ell2n1}
 \sum_{\substack{\ell_1 \neq \ell_2 \in \mathcal{L}_j \\ n_1, n_2 \asymp N'\\ 1 \leq |\ell_2 n_1-\ell_1 n_2| \ll \frac{L^j Z}{p}}}
     |b_j(\ell_1) \lambda_g(n_2)|^2
    \leq
    \sum_{\substack{\ell_1 \in \mathcal{L}_j \\ n_2 \asymp N'}} 
    |b_j(\ell_1) \lambda_g(n_2)|^2
    \sum_{\ell_2 \in \mathcal{L}_j}
    \sum_{n_1 = \frac{\ell_1 n_2}{\ell_2} + O(\frac{L^j Z}{p \ell_2})} 1\ll p^\varepsilon L N' L \left(1+\frac{Z}{p}\right). 
\end{equation}
Putting these bounds together, we deduce that
\begin{equation}\label{T''bound}
    T_{\neq 0}'' \ll
    p^{\varepsilon} L N'  L\Big(1 + \frac{Z}{p}\Big)
    \frac{Z}{L^j} 
     QK 
    \Big( K^{-3 \sigma} (Q L^j)^{1-\sigma}
    +  (Q L^j)^{\frac{7}{8}} \Big).
\end{equation}
Using $K=p^{\varepsilon} Z^{-1} L^j Q^2$ and the definition of $Z$ in \eqref{Zdef} together with the bound for $M'$ in \eqref{eq:M'N'over1plusPbound}, we see that $K^{-3\sigma}(QL^j)^{1-\sigma}\gg (QL^j)^{7/8}$ as long as $Q\ll p^{1-\varepsilon}/L^j\ll p^{1-\varepsilon}/d^2$ and this is satisfied for all $L\ll p^{1/6-\varepsilon}$ using $d^2\ll L^jp^\varepsilon$ from the bound in $N'$ in \eqref{eq:M'N'over1plusPbound}, 
Again using $K =p^{\varepsilon} Z^{-1} L^j Q^2$, we find that the bound \eqref{T''bound} is an increasing function of $Z$.  With the upper bound for $Z$ from \eqref{eq:ZboundUnified}, which gives $K = p^{-1+\varepsilon} Q L^{j/2} d \sqrt{N}$ and the upper bound on $N'$ from \eqref{eq:M'N'over1plusPbound}, we obtain
\begin{align}
    T_{\neq 0}'' 
    &\ll p^\varepsilon L \frac{L^j}{d^2}L \Big(1+ \frac{QpL^{j/2}}{d\sqrt{N}}\Big)\frac{QpL^{j/2}}{d\sqrt{N}}Q K^{1-3\sigma} (QL^j)^{1-\sigma}
    \\&\ll
    p^\varepsilon\frac{L^{2} Q}{p d^2} \frac{Q^2 p^2 L^j}{d^2 N}
     \Big(\frac{Q L^{j/2} d \sqrt{N}}{p}\Big)^{1-3 \sigma} (Q L^j)^{1-\sigma},
\end{align}
which agrees with \eqref{eq:Tneq0''finalbound}.

Next, we analyze the contribution of $T_{\not=0}''$ to $S(N, \mathcal{A}_j)$ via \eqref{eq:SboundInTermsofT} which gives 
\begin{align}
    p^\varepsilon\sum_{d}\frac{\sqrt{N}}{Q\sqrt{L^j}}\Big(\frac{1}{d^2}p^{3\sigma} L^{5j/2+2-5j\sigma/2}N^{-1/2-3\sigma/2}Q^{5-4\sigma}\Big)^{1/2}.
\end{align} Note that this bound, as a function of $Q$, is an increasing function, and so the bound is maximized when $Q = \sqrt{N L^j}/d$.  Moreover, the sum over $d$ produces a convergent series, so it does not contribute to the bound in a substantial way.  
Hence this contributes to $S(N, \mathcal{A}_j)$ via \eqref{eq:SboundInTermsofT} 
the claimed bound
\begin{equation*}
p^\varepsilon L^{-j} 
 p^{3\sigma/2} L^{5j/4+1-5j\sigma/4}N^{-1/4-3\sigma/4}(NL^j)^{5/4-\sigma}
= 
p^\varepsilon N^{\frac{4-7\sigma}{4}} L^{1 + \frac{j}{4}(6-9\sigma)} p^{3\sigma/2}.\qedhere
\end{equation*}
\end{proof}
\begin{lemma}
\label{lemma:Tneq0diag}
The bounds stated in Lemma \ref{lemma:Tneq0''bound} also hold for 
$T_{\neq 0}^{\text{diag}}$ (and its contribution to $S''(N, \mathcal{A}_j)$).
\end{lemma}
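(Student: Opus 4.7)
The plan is to mirror the proof of Lemma \ref{lemma:Tneq0''bound} with two compensating modifications tailored to the diagonal identification $\ell_1=\ell_2$. Relative to \eqref{eq:Tneq0''formula}, the formula \eqref{eq:Tneq0diagFormula} differs in exactly two respects: it carries the additional oscillation $e_{\ell_2}(-\overline{c_1c_2}\,pn_2m)$ of modulus $\ell_2$, and the diophantine constraint on the outer sum is tightened from $|\ell_2n_1-\ell_1n_2|\ll L^jZ/p$ to $|n_1-n_2|\ll Z/p$. The first change obstructs a direct trilinear treatment, which I would resolve by invoking Lemma \ref{lemma:exponentialmodulusell2SeparationofVariables} to expand
\[
e_{\ell_2}(-\overline{c_1c_2}\,pn_2m)=\frac{1}{\varphi(\ell_2)}\sum_{\eta \,(\mathrm{mod}\,\ell_2)}\tau(\bar\eta)\,\eta(-\overline{c_1c_2}\,pn_2m).
\]
The character values $\eta(c_1),\eta(c_2),\eta(m)$ can be absorbed as pointwise twists into the three trilinear coefficients without changing their $\ell^2$ norms. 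The edge cases with $(\ell_2,n_2m)>1$ are handled by the smaller-modulus variant remarked on after Lemma \ref{lemma:exponentialmodulusell2SeparationofVariables}, which yields strictly stronger estimates.

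With the Gauss-sum separation in place, I would apply the Fourier decomposition \eqref{eq:alphac10c20Fourier} of $\alpha_0(m)$ (controlled via Lemma \ref{lemma:alphaHatFourierTransformL1bound}) and the archimedean separation \eqref{eq:archimedeanseparationofvariables}, exactly as in the proof of Lemma \ref{lemma:Tneq0''bound}. Setting $q_1=c_1'$ and $q_2=c_{10}c_2\ell_1$, the innermost sum becomes a trilinear form
\[
\sum_{|m'|\ll K/(c_0 m_0')}\ \sum_{(q_1,q_2)=1}\tilde\alpha_{q_1}\tilde\beta_{q_2}\tilde\gamma_{m'}\,e_{q_1}\!\big(\overline{q_2}\,pm'(n_1-n_2)\big),
\]
with $q_1\asymp Q/(c_0c_{10}')$ and $q_2\asymp c_0c_{10}'QL^j$. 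Coprimality $(q_1,q_2)=1$ follows from the side coprimality conditions in \eqref{eq:Tneq0diagFormula}, the coefficient sequences inherit the size bounds \eqref{abgbound}, and the tightened constraint $|n_1-n_2|\ll Z/p$ ensures that the factor $(1+|a|K_0/MN)^{1/2}$ in Theorem \ref{thm:BC} is $O(1)$ exactly as before.

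Now apply Theorem \ref{thm:BC}. Since $MN=q_1q_2\asymp Q^2L^j$ (smaller by a factor $L^j$ than in the non-diagonal case) while $M+N\asymp c_0c_{10}'QL^j$ retains the same order, the Theorem \ref{thm:BC} bound is strictly smaller than its counterpart from the proof of Lemma \ref{lemma:Tneq0''bound}. The outer character sum over $\eta$ contributes $\varphi(\ell_2)^{-1}\sum_\eta|\tau(\bar\eta)|\ll L^{j/2}$. Finally, under the constraint $|n_1-n_2|\ll Z/p$ and the diagonal identification $\ell_1=\ell_2$, the outer weighted count obeys
\[
\sum_{\substack{\ell_1\in\mathcal{L}_j,\ n_1,n_2\asymp N'\\ 1\leq|n_1-n_2|\ll Z/p}}|b_j(\ell_1)\lambda_g(n_2)|^2 \ \ll\ p^{\varepsilon} L\,N'(1+Z/p),
\]
by \eqref{RS} and \eqref{bjl2bound}, saving a factor of $L$ relative to \eqref{ell2n1}. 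Combined with the saving from the smaller $MN$ and against the $L^{j/2}$ loss from the Gauss sum, the net effect is in our favour for both $j=1$ and $j=2$, so the bound for $T_{\neq 0}^{\mathrm{diag}}$ matches that of $T_{\neq 0}''$, and hence its contribution to $S''(N,\mathcal{A}_j)$ as well.

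The main obstacle is the bookkeeping of powers of $L$: one must verify that the $L^{j/2}$ loss from the Gauss sum separation is indeed outweighed by the combined $L$-saving from the diagonal constraint together with the savings from the smaller bilinear modulus $MN$ in Theorem \ref{thm:BC}, across both $j=1$ and $j=2$ and both terms in Theorem \ref{thm:BC}. A minor subtlety is that the verification must be uniform over the decomposition $c_1 = c_{10}c_1'$, $c_2 = c_{20}c_2'$ and in particular over configurations with large $c_0=(c_{10},c_{20})$; however these factors accumulate through convergent Dirichlet-type series of the same shape as in the proof of Lemma \ref{lemma:Tneq0''bound}, and do not affect the final estimate.
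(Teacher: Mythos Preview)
Your proposal is correct and follows the same approach as the paper: separate $e_{\ell_2}(-\overline{c_1c_2}\,pn_2m)$ via Lemma \ref{lemma:exponentialmodulusell2SeparationofVariables}, take $q_1=c_1'$ in the trilinear form, and balance the $L^{j/2}$ Gauss-sum loss against the savings coming from the smaller product $MN$ and from the diagonal identification $\ell_1=\ell_2$. The paper's bookkeeping records the net effect on $U^{\mathrm{diag}}$ as a factor $\ell_2^{1/2-7/20}=L^{3j/20}$ worse than $U$, which is then outweighed by the saving in the outer sum over $\ell_1,\ell_2,n_1,n_2$; your accounting differs slightly in presentation (and your $q_2$ carries a harmless extra factor of $c_0$) but reaches the same conclusion.
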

\begin{proof}
The proof follows the same way as that in Lemma \ref{lemma:Tneq0''bound}. Using \eqref{eq:Tneq0diagFormula},  \eqref{eq:alphac10c20Fourier}, 
and \eqref{eq:alphaHatFourierTransformL1bound} together with Lemma \ref{lemma:exponentialmodulusell2SeparationofVariables}, we see that the contribution from the terms with $(\ell_2, n_2m)=1$ can be bounded by an expression similar to \eqref{eq:Tneq0''formula2}, where $\ell_1=\ell_2$ and $U(\cdot)$ is replaced by \begin{align}
    U^{\text{diag}}(\cdot) = 
    \sum_{\eta \shortmod{\ell_2}} \frac{\tau(\overline{\eta}) \eta(-\overline{c_0c_{10}' c_{20}'} p n_2 m_0'm')}{\varphi(\ell_2)}
   &\sum_{\substack{(c_1', c_2') = 1 \\ (c_1', \ell_1) = (c_2', \ell_2) = 1 \\ c_1' \asymp \frac{Q}{c_{10}}, c_2' \asymp \frac{C}{c_{20}}}}\sum_{\substack{0<|m|\ll K/c_0m_0'\\ (c_0, m')=1}}
 \chi \psi \eta(c_1') \overline{\chi} \psi \eta(c_2') \\ & \times \overline{\psi}(m')
    e_{c_1'}(\overline{c_0c_{10}' c_{20}' c_2' \ell_1} pm_0'm' \frac{D}{\ell_2})
    w(c_0m_0'm',\cdot),
\end{align}
where $\frac{D}{\ell_2} = n_1 - n_2$ and $K=\frac{L^jQ^2p^\varepsilon}{Z}$. 
Now here we let $q_1 = c_1'$ and $q_2 = c_{0}c_{10}' \ell_2 c_{20}' c_2'$, so that $q_1 \asymp \frac{Q}{c_{0}c_{10}'}$ and $q_2 \asymp Q \ell_2 c_{10}'$.  Here the $q_1$ size, relative to the proof of Lemma \ref{lemma:Tneq0''bound}, is smaller by a factor of $\ell_2$ but without congruence conditions, while $q_2$ has the same size and congruence conditions as before. The conditions for $m'$ are the same as before. Thus the analogue of \eqref{abgbound} remains the same. We apply Theorem \ref{thm:BC} with $a=\frac{D}{\ell_2}$ and $M=\frac{Q}{c_0c_{10}'}$, both of which are smaller than a factor of $\ell_2$ than before. Since the $\eta$-sum contributed to an extra factor of $\ell_2^{1/2}$, we see that the bound for $U^{\text{diag}}(\cdot)$ is worse than that for $U(\cdot)$ by a factor of $\ell_2^{1/2-7/20}=L^{3j/20}$. The estimates for $c_{10}, c_{20}, m_{0}'$ are as before whereas the sum over $\ell_1, \ell_2, n_1, n_2$ is smaller than that in \eqref{ell2n1} by a factor of $L^j$ since $\ell_1=\ell_2$. Therefore, we see that the contribution from $(\ell_2, n_2m)=1$ to $T^{\text{diag}}_{\not=0}$ is smaller than that of $T_{\not=0}$ by a factor of $L^{3j/20-1}$. The comment after Lemma \ref{lemma:exponentialmodulusell2SeparationofVariables} shows that the contribution from $(\ell, n_2m)>1$ is even smaller. 
\end{proof}
\subsection{Conclusion}
Now we put everything together.  From Lemmas \ref{lemma:conclusionBeforePoisson}, \ref{lemma:T0'bound}, \ref{lemma:Tneq0''bound} and \ref{lemma:Tneq0diag}, we obtain for $L<p^{1/6-\varepsilon}$, and with $\sigma = 1/20$,
\begin{align}
    S(N, \mathcal{A}_j) \ll&\ p^\varepsilon\Bigg(
    \sqrt{NLp} + N^{1-7\sigma/4} p^{3\sigma/2}L^{3j/2+1-9j\sigma/4}
    +
    \frac{N L^{1+j}}{\sqrt{p}}  + N^{3/4} L^{1+j} +
     p^{1/2}N^{1/4} L^{\frac12 + \frac{3j}{4}} \Bigg).
\end{align}
When $L\ll N^{1/10}$, the last three terms above may be absorbed by the term $\sqrt{NLp}$, so it simplifies as 
\begin{align}
    S(N,\mathcal{A}_j) \ll p^\varepsilon \Big( \sqrt{NLp}+N^{1-7\sigma/4} p^{3\sigma/2}L^{3j/2+1-9j\sigma/4}\Big).
\end{align}
This concludes the proof of Proposition \ref{thm:SNAbound} and hence Theorem \ref{thm:mainthm}.
\begin{remark}
    If $\chi$ is trivial, then one can apply \cite[Theorem 1.8]{lichtman} to obtain better bounds for $T_{\not=0}''$ when $c_{10}, c_{20}$ are small. This would lead to an improvement of Theorem \ref{thm:mainthm} when $\chi$ is trivial, however it appears the resulting bound is inferior to the result of \cite{Zacharias}.
\end{remark}

\section{Adjustments for Maass forms and Eisenstein series}\label{sect:adjustment}

When $f$ and/or $g$ is a Maass form, the preliminaries in Section \ref{sect:prelim} and the whole argument in Section \ref{sect:proof} goes through with the following minor adjustments.
\begin{enumerate}
    \item The gamma factors in the functional equation \eqref{eq:FE} have to be adjusted accordingly with \cite[Chp. 5.11]{IK04}. We obtain the same conclusion \eqref{eq:AFEdyadicVersion} and \eqref{eq:exp sum} with an appropriate weight function $w_N$.
    \item The Voronoi summation formula stated in Lemma \ref{prop:voronoi} has to be adjusted accordingly with \cite[Theorem A.4]{KMV02}. In the application of Voronoi summation in Section \ref{section:FunctionalEquations} and \ref{sect:BoundingS0}, the Bessel functions have to be adjusted accordingly with $\pm$ sign introduced on each of the $m$ and $n$ sums. The rest of the argument goes through as \eqref{JBesAs} also holds for the other Bessel functions with appropriate weight functions $W_k$.
    \item The bound we used for the Fourier coefficients $\lambda_f(m)$ and $\lambda_g(n)$ are in terms of \eqref{RS} and \eqref{BoundAtp}, which carry through unchanged.
\end{enumerate}

Now we turn our attention to the case when $g$ is an Eisenstein series. The only 
additional
adjustment to make is with the Voronoi summation formula. With $\lambda_g(n)=\tau(n)$, the Voronoi summation applied on the $n$-sum in Sections \ref{section:FunctionalEquations} and Section \ref{sect:BoundingS0} creates a main term according to \cite[Theorem A.4]{KMV02}. Other than the main term, the rest of the argument goes through as discussed above.

The contribution of the main term to $S'(N,\mathcal{A}_j)$ in Section \ref{section:FunctionalEquations} is given by 
  \begin{equation}\label{main term}
        E'(N, \mathcal{A}_j):=\frac{1}{C \sqrt{p}}\sum_{\ell\in\mathcal{L}_j}b_j(\ell)
    \sum_{d} 
    \sum_{\substack{(c, \ell) = 1 }}  \frac{\chi(c) }{c^3 d}      \, 
    \sum_{m } \, 
    \, \overline{\lambda_{f}}(m)  
           S(0, - m \overline{p}; c)
           I_{\mathrm{Eis}}(m,\cdot),
    \end{equation}
    where assuming $f$ is holomorphic (say) we have
    \begin{equation}
          I_{\mathrm{Eis}}(m, \, \cdot\, )= 
 \int_{0}^{\infty} \, \int_{0}^{\infty} \ w_{N\ell}(x)w_{N}(y) U\left( \frac{x-y\ell}{cdC}\right) J_{k-1}\left(\frac{4\pi}{c}\sqrt{\frac{mx}{p}}\right) \log\left( \frac{y e^{2 \gamma}}{c^2} \right) 
        w(\cdot)
        \ dx\, dy.   
    \end{equation}
Changing variables $y \rightarrow y + \frac{x}{\ell}$ leads to an $x$-integral with 
only the single phase coming from the Bessel function, which means that we are only in the non-oscillatory range.  Thus we have
 $M' \ll \frac{Q^2 p^{1+\varepsilon}}{N \ell}$ and $I_{\text{Eis}} \ll Q CN$.  The trivial bound on $E'$ from this is then
\begin{equation}
    \ll \frac{p^{\varepsilon}}{C \sqrt{p}} L \frac{1}{Q^2} QCN M' 
    \ll \frac{p^{\varepsilon}}{ \sqrt{p}} L \frac{N}{Q} \frac{Q^2 p}{N \ell} 
    = p^{\varepsilon} \sqrt{p} Q \frac{L}{\ell} \ll p^{\varepsilon}  \sqrt{pN \ell} \frac{L}{\ell} \ll p^{\varepsilon} \sqrt{pN L}.
\end{equation}
Similarly, the main term contribution to $S_0(N,\mathcal{A}_j)$ in Section \ref{sect:BoundingS0} can be bounded by the same bound, and they are subsumed by the bound of $T^{(0)}(N,\mathcal{A}_j)$ in Lemma \ref{lemma:Tosc0bound}.

\bibliographystyle{alpha}
\bibliography{ref}	

\end{document}